\documentclass{article}
\usepackage{amssymb,amscd,amsmath,amsthm,xcolor}
\usepackage{graphics}
\usepackage[dvips]{graphicx}
\usepackage{hyperref}
\usepackage[noabbrev,capitalise]{cleveref}
\usepackage{url}
\usepackage{authblk}

\usepackage{mathtools}

\def\NN{\mathbb{N}}
\usepackage{helvet}

\def\PP{\mathbb{P}}
\def\QQ{\mathbb{Q}}
\def\F{\mathcal{F}}
\def\U{\mathcal{U}}
\def\C{\mathcal{C}}
\def\I{\mathcal{I}}
\def\D{\mathcal{D}}

\def\A{\mathcal{A}}
\def\P{\mathcal{P}}
\def\M{\mathcal{M}}
\def\Nc{\mathcal{N}}
\def\PP{\mathbb{P}}
\def\Q{\mathcal{Q}}
\def\vr{$\vec{R}$}
\renewcommand{\S}{\mathcal{S}}
\newcommand{\R}{\mathcal{R}}
\usepackage{algorithm2e}
\RestyleAlgo{ruled}
\usepackage{babel}[french]
\usepackage{tikz}
\renewcommand{\L}[0]{\mathcal{L}}

\usepackage{xcolor} 

\newcommand{\qvdash}{\operatorname{{?}{\vdash}}}
\newcommand{\nqvdash}{\operatorname{{?}{\nvdash}}}
\newcommand{\uh}{\upharpoonright}

\newcommand{\RCA}{\mathsf{RCA}}
\newcommand{\WKL}{\mathsf{WKL}}

\newcommand{\EM}{\mathsf{EM}}
\newcommand{\RT}{\mathsf{RT}}
\newcommand{\MCO}{$\M$-cohesive }
\newcommand{\UCM}{\U_C^\M}
\newcommand{\UCNMN}{\U_{C_n}^{\M_n}}

\newcommand{\seq}[1]{$({#1}_n)_{n \in \omega}$}
\def\qt#1{``#1''}%
\newcommand{\Formula}{\text{Form}}

\newtheorem{theorem}{Theorem}
\numberwithin{theorem}{section}
\newtheorem{maintheorem}[theorem]{Main Theorem}
\newtheorem{lemma}[theorem]{Lemma}
\newtheorem{proposition}[theorem]{Proposition}
\theoremstyle{definition}
\newtheorem{definition}[theorem]{Definition}

\newtheorem{remark}[theorem]{Remark}
\newtheorem{statement}[theorem]{Statement}

\makeatletter
\newtheorem*{rep@theorem}{\rep@title}
\newcommand{\newreptheorem}[2]{%
\newenvironment{rep#1}[1]{%
 \def\rep@title{#2 \ref{##1}}%
 \begin{rep@theorem}}%
 {\end{rep@theorem}}}
\makeatother

\newreptheorem{maintheorem}{Main Theorem}

\title{The weakness of the Erd\H{o}s-Moser theorem\\ under arithmetic reductions}

\author[1]{Ludovic Levy Patey}
\affil[1]{CNRS, IMJ-PRG\\
   Universit\'e Paris Cit\'e\\
   Paris, France} 

\author[2]{Ahmed Mimouni}
\affil[2]{Univ Paris Est Creteil, LACL, F-94010 Creteil, France} 

\begin{document}
\maketitle

\begin{abstract}
The Erd\H{o}s-Moser theorem ($\EM$) says that every infinite tournament admits an infinite transitive subtournament.
We study the computational behavior of the Erd\H{o}s-Moser theorem with respect to the arithmetic hierarchy,
and prove that $\Delta^0_n$ instances of~$\EM$ admit low${}_{n+1}$ solutions for every~$n \geq 1$, and that if a set~$B$ is not arithmetical,  then every instance of~$\EM$ admits a solution relative to which~$B$ is still not arithmetical. We also provide a level-wise refinement of this theorem.
These results are part of a larger program of computational study of combinatorial theorems in Reverse Mathematics.
\end{abstract}

\section{Introduction}

We conduct a computational study of the Erd\H{o}s-Moser theorem, an infinitary statement from graph theory. 
A \emph{tournament} on a domain~$D \subseteq \NN$ is an irreflexive binary relation $R \subseteq D^2$ such that for every~$a, b \in D$ with~$a \neq b$, exactly one of $R(a, b)$ and $R(b, a)$ holds. A tournament $R$ is \emph{transitive} if for every~$a, b, c \in D$, if $R(a, b)$ and $R(b, c)$ then $R(a, c)$. A subtournament of~$R$ is the restriction of~$R$ to a subdomain~$H \subseteq D^2$. We identify subtournaments with their domains.
The following statement is known as the Erd\H{o}s-Moser theorem, and is an infinitary version of some theorem by Erd\H{o}s and Moser~\cite{erdos1964representation}.

\begin{statement}[Erd\H{o}s-Moser theorem]
$\EM$ is the statement \qt{Every infinite tournament admits an infinite transitive subtournament.}
\end{statement}

The Erd\H{o}s-Moser theorem easily follows from the celebrated Ramsey theorem for pairs.
Given a set~$X \subseteq \NN$ and some integer~$n \in \NN$, we let $[X]^n$ denote the set of all unordered $n$-tuples over~$X$. 
Given a coloring $f : [\NN]^n \to k$, a set~$H \subseteq \NN$ is \emph{$f$-homogeneous (for color~$i < k$)}  if $f(\sigma) = i$ for every~$\sigma \in [\NN]^n$. 

\begin{statement}[Ramsey's theorem]
Given $n, k \in \NN$, $\RT^n_k$ is the statement \qt{Every coloring $f : [\NN]^n \to k$ admits an infinite $f$-homogeneous set}.
\end{statement}

There exists a one-to-one correspondence between a tournament~$R$ and a coloring $f : [\NN]^2 \to 2$, by letting $f(\{x, y\}) = 1$ if $(R(x, y) \leftrightarrow x <_\NN y)$. $\EM$ can the restated as \qt{for every coloring $f : [\NN]^2 \to 2$, there exists an infinite \emph{$f$-transitive} subset~$H$, that is, for every~$x, y, z \in H$ such that $x < y < z$ and every~$i < 2$, then if $f(\{x, y\}) = f(\{y, z\}) = i$, we have $f(\{x, z\}) = i$.}
Since any $f$-homogeneous set is $f$-transitive, the Erd\H{o}s-Moser theorem can be considered as a particular case of Ramsey's theorem for pairs.

\subsection{$\EM$ and $\RT^2_2$ in Reverse Mathematics}

Both the Erd\H{o}s-Moser theorem and Ramsey's theorem for pairs have been extensively studied in Reverse Mathematics, both from a computational and a proof-theoretic viewpoint. See Hirschfeldt~\cite{hirschfeldt2017slicing} for an introduction to the reverse mathematics of combinatorial principles.

From many perspectives, $\EM$ is very close to $\RT^2_2$. The combinatorics are very similar, and the Erd\H{o}s-M\H{o}ser theorem can be considered as a disjunction-free version of Ramsey's theorem for pairs. These similarities in combinatorics have many consequences in Reverse Mathematics. Jockusch~\cite{jockusch1972ramsey} proved that every computable instance of~$\RT^2_2$ admits a $\Pi^0_2$ solution, while there exists a computable instance of~$\RT^2_2$ with no $\Sigma^0_2$ solution. These bounds are the same for the Erd\H{o}s-Moser theorem.
On the proof-theoretic side, the first-order part of Ramsey's theorem for pairs and the Erd\H{o}s-Moser theorem are known to coincide~\cite{chong2021pi11}. More generally, most of the known statements implied by $\RT^2_2$ are already known to follow from~$\EM$ over~$\RCA_0$, the base theory of Reverse Mathematics. Whether $\EM$ implies~$\RT^2_2$ was open for a long time, before Lerman, Solomon and Towsner~\cite{lerman2013separating} answered it negatively.

When considering non-computable instances, the behaviors of $\RT^2_2$ and $\EM$ turn out to be very dramatically different. For every function $g : \NN \to \NN$, there exists a coloring $f : [\NN]^2 \to 2$ such that every infinite $f$-homogeneous set computes a function~$h$ dominating $g$, that is, $\forall x(h(x) \geq g(x))$. Indeed, simply take $f(x, y) = 1$ iff $g(x) < y$. Thus, by a theorem of Slaman and Groszek~\cite{slaman2007moduli}, there exists a (non-computable) instance of $\RT^2_2$ such that every solution computes every hyperarithmetic (or equivalently $\Delta^1_1$) set. On the other hand, Patey and Wang (both unpublished) independently proved that for every non-computable set~$B$ and every instance of~$\EM$, there exists a solution which does not compute~$B$. This property of~$\EM$ is shared with the infinite pigeonhole principle ($\RT^1_2$). Indeed, Dzhafarov and Jockusch~\cite{jockusch2009ramsey} proved that for every non-computable set~$B$ and every set~$A$, there is an infinite subset~$H$ of~$A$ or $\overline{A}$ which does not compute~$B$.

\subsection{$\EM$ and $\RT^1_2$ under stronger reductions}

As mentioned above, when considering non-computable instances, the Erd\H{o}s-Moser theorem seems to have closer behavior to the pigeonhole principle than to Ramsey's theorem for pairs. In a series of papers, Monin and Patey~\cite{monin2019pigeons,monin2021weakness,monin2021models} developed a framework to control iterated jumps of solutions to the pigeonhole principle. They proved in particular the following three facts (see~\cite{monin2021weakness}):
\begin{itemize}
    \item If~$B$ is not arithmetic (resp.\ hyperarithmetic), then for every set~$A$, there is an infinite subset~$H$ of~$A$ or $\overline{A}$ such that~$B$ is not $A$-arithmetic (resp.\ $A$-hyperarithmetic).
    \item If $B$ is not $\Sigma^0_n$ (resp.\ $\Delta^0_n)$, then for every set~$A$, there is an infinite subset~$H$ of~$A$ or $\overline{A}$ such that~$B$ is not $\Sigma^0_n(A)$ (resp.\ $\Delta^0_n(A)$).
    \item For every $\Delta^0_n$ set~$A$, there is an infinite subset~$H$ of~$A$ or $\overline{A}$ of low${}_n$ degree.
\end{itemize}

In this article, we prove the following three theorems:

\begin{maintheorem}\label[theorem]{thm:arithmetic-main}
If $B$ is not arithmetic, then for every tournament~$T$, there is an infinite transitive subtournament~$H$ such that $B$ is not $H$-arithmetic.
\end{maintheorem}

The generalization to the hyperarithmetic hierarchy is not proven, but the authors believe that it holds with the same proof \emph{mutatis mutandis}. Like for the pigeonhole principle, a layerwise version of the previous theorem holds:

\begin{maintheorem}\label[theorem]{thm:layerwise-main}
Fix $n \geq 1$. If $B$ is not $\Sigma^0_n$, then for every tournament~$T$, there is an infinite transitive subtournament~$H$ such that $B$ is not $\Sigma^0_n(H)$.
\end{maintheorem}

The case $n = 1$ was already proven independently by the first author and Wang (unpublished).
The statement where $\Sigma^0_n$ is replaced by $\Delta^0_n$ directly follows from \Cref{thm:layerwise-main}
and Post's theorem. Indeed, if $B$ is not $\Delta^0_n$, then by Post's theorem, either it or its complement is not $\Sigma^0_n$. Then apply \Cref{thm:layerwise-main} to conclude.

\begin{maintheorem}\label[theorem]{thm:effective-main}
Fix $n \geq 1$. Every $\Delta^0_n$ tournament~$T$ has an infinite transitive subtournament of low${}_{n+1}$ degree.
\end{maintheorem}

The case $n = 1$ follows from the same statement for Ramsey's theorem for pairs, proven by Cholak, Jockusch and Slaman~\cite{cholak_jockusch_slaman_2001}. On the other hand, as explained, the statement for Ramsey's theorem for pairs fails for~$n > 1$, since there exists a $\Delta^0_2$ instance of $\RT^2_2$ such that every solution computes~$\emptyset'$.

Besides the intrinsic interest of these theorems, they are also motivated by the more general program of development of good iterated jump control of combinatorial theorems, and in particular by the goal to prove the strictness of the free set, thin set and rainbow ramsey hierarchies. See Monin and Patey~\cite[Section 1.2]{monin2021weakness} for a discussion on the subject. The Erd\H{o}s-Moser is the first statement about colorings of pairs which is known to admit a good iterated jump control.

\subsection{Definition and notation}

A \emph{binary string} is an ordered tuple of bits $a_0, \dots, a_{n-1} \in \{0, 1\}$.
The empty string is written $\epsilon$. A \emph{binary sequence} (or a \emph{real}) is an infinite listing of bits $a_0, a_1, \dots$.
Given $s \in \omega$,
$2^s$ is the set of binary strings of length $s$ and
$2^{<s}$ is the set of binary strings of length $<s$. Accordingly,
$2^{<\omega}$ is the set of binary strings
and $2^{\omega}$ is the set of binary sequences.
Given a string $\sigma \in 2^{<\omega}$, we use $|\sigma|$ to denote its length.
Given two strings $\sigma, \tau \in 2^{<\omega}$, $\sigma$ is a \emph{prefix}
of $\tau$ (written $\sigma \preceq \tau$) if there exists a string $\rho \in 2^{<\omega}$
such that $\sigma^\frown \rho = \tau$. Given a sequence $X$, we write $\sigma \prec X$ if
$\sigma = X \uh n$ for some $n \in \omega$.
A binary string $\sigma$ can be interpreted as a finite set $F_\sigma = \{ x < |\sigma| : \sigma(x) = 1 \}$. We write $\sigma \subseteq \tau$ for $F_\sigma \subseteq F_\tau$.
We write $\#\sigma$ for the size of $F_\sigma$. Given two strings $\sigma$ and $\tau$, we let $\sigma \cup \tau$ be the unique string $\rho$ of length $\max(|\sigma|, |\tau|)$ such that $F_\rho = F_\sigma \cup F_\tau$. 

A \emph{binary tree} is a set of binary strings $T \subseteq 2^{<\omega}$ which is closed downward under the prefix relation. A \emph{path} through $T$ is a binary sequence $P \in 2^\omega$ such that every initial segment belongs to $T$.

A \emph{Turing ideal} $\I$ is a collection of sets which is closed downward under the Turing reduction and closed under the effective join, that is, $(\forall X \in \I)(\forall Y \leq_T X) Y \in \I$ and $(\forall X, Y \in \I) X \oplus Y \in \I$, where $X \oplus Y = \{ 2n : n \in X \} \cup \{ 2n+1 : n \in Y \}$. A \emph{Scott set} is a Turing ideal $\I$ such that every infinite binary tree $T \in \I$ has a path in $\I$. In other words, a Scott set is the second-order part of an $\omega$-model of $\RCA_0 + \WKL$.
A Turing ideal $\M$ is \emph{countable coded} by a set $X$
if $\M = \{ X_n : n \in \omega \}$ with $X = \bigoplus_{n \in \omega} X_n$.
Given $n \geq 1$, a formula is $\Sigma^0_n(\M)$ (resp.\ $\Pi^0_n(\M)$) if it is $\Sigma^0_n(X)$ (resp.\ $\Pi^0_n(X)$) for some $X \in \M$.

Given two sets $A$ and $B$, we denote by $A < B$ the formula
$(\forall x \in A)(\forall y \in B)[x < y]$.
We write $A \subseteq^{*} B$ to mean that $A - B$ is finite, that is,
$(\exists n)(\forall a \in A)(a \not \in B \rightarrow a < n)$.
A \emph{$k$-cover} of a set $X$ is a sequence of sets $Y_0, \dots, Y_{k-1}$ such that $X \subseteq Y_0 \cup \dots \cup Y_{k-1}$.

\subsection{Organization of this paper}

In \Cref{sect:big-picture}, we try to give an overview of the forcing construction, by explaining in \Cref{sect:picture-forcing-question} the importance of the so-called \qt{forcing question}, then  diving in \Cref{sect:combinatorics-em} into the combinatorics of~$\EM$, especially explaining the role of the infinite pigeonhole principle as a warrant of extendibility for the Erd\H{o}s-Moser theorem, and then explaining in \Cref{sect:iterated-jump-picture} the issues raised when trying to control iterated jumps of solutions with variants of Mathias forcing.

In \Cref{sect:largeness-pr}, we restate the main properties of partition regular and large classes, studied in Monin and Patey~\cite{monin2019pigeons,monin2021weakness}. In particular, we define the notions of cohesive and minimal classes in~\Cref{sect:minimal-cohesive-classes}, which play an essential role to maintain the compatibility of large classes between different levels of the iterated jump control. Last, we restate in \Cref{sect:uc-sequence} the existence of a hierarchy of Scott sets and of cohesive classes, which play the role of a spine for the main notion of forcing.

\Cref{sect:forcing-framework} is dedicated to the development of the main forcing framework, by defining its conditions, the forcing relation, and a forcing question. This framework is applied in various contexts, to prove strong cone avoidance of~$\EM$ for arithmetic reductions in \Cref{sect:strong-avoidance-arith}, a layerwise version of this strong cone avoidance for $\Sigma^0_n$ operators in~\Cref{sect:layerwise-avoidance}, and prove the existence of low${}_n$ solutions through an effectivization of the construction in~\Cref{sect:effective-constructions}.

\section{The big picture}\label[section]{sect:big-picture}

The techniques used in this article are rather sophisticated with many technical subtleties, and it may be quite hard to have the big picture. In this section, we describe the general forcing argument used to prove our main theorems, and highlight a few technical difficulties justifying the design choice of our notion of forcing.

\subsection{Forcing question}\label[section]{sect:picture-forcing-question}

The three main theorems are related, in that they involve very similar techniques of iterated jump control. Indeed, in each case, it consists of constructing a solution whose $\Sigma^0_n$ properties resemble the ones of the ground model. For this, one tries to translate $\Sigma^0_n(G)$ formulas relative to the generic object constructed~$G$ into absolute $\Sigma^0_n$ formulas. In set-theoretic forcing, this is achieved through the forcing relation, whose definition must be sufficiently simple (in terms of definitional complexity) to make the new model inherit properties of the ground model. In computability theory, the situation is slightly different, and the simplicity of the forcing relation is less important than the one of the so-called \emph{forcing question}. 

In what follows, a \emph{notion of forcing} is a partial order $(\PP, \leq)$
such that every sufficiently generic filter~$\F \subseteq \PP$ induces a set~$G_\F \subseteq \NN$.
Every notion of forcing is equipped with a forcing relation, written $\Vdash$, between the set of conditions $\PP$ and the set of arithmetic formulas $\Formula[G]$ with a set parameter~$G$ denoting the generic object constructed.

\begin{definition}
Fix a notion of forcing $(\PP, \leq)$. 
A \emph{forcing question} is a relation $\qvdash$ over $\PP \times \Formula[G]$ such that, for every~$c \in \PP$ and $\varphi(G) \in \Formula[G]$,
\begin{enumerate}
	\item[(1)] If $c \qvdash \varphi(G)$, then there is an extension $d \leq c$ such that $d \Vdash \varphi(G)$;
	\item[(2)] If $c \nqvdash \varphi(G)$, then there is an extension $d \leq c$ such that $d \Vdash \neg \varphi(G)$;
\end{enumerate}	
\end{definition}

The notion of forcing question is not canonical, and a single notion of forcing might have many candidate forcing questions. On the other hand, many computational properties of the generic object~$G$ might be directly derived from the existence of a forcing question with sufficiently nice definitional properties.
Consider for example the following property:

\begin{definition}\label[definition]{def-uniformly-preserving}
Fix a notion of forcing $(\PP, \leq)$ and some~$n \in \NN$.
A forcing question $\qvdash$ is \emph{uniformly $\Sigma^0_n$-preserving} if for every~$c \in \PP$
and every uniform sequence of $\Sigma^0_n$ formulas $\varphi_0(G), \varphi_1(G), \dots$, the sequence $c \qvdash \varphi_0(G), c \qvdash \varphi_1(G), \dots$ is uniformly $\Sigma^0_n$.
\end{definition}

The following proposition is at the heart of our forcing construction. It was used by Wang~\cite[Proposition 3.1, Proposition 3.4, Theorem 3.6]{wang2016definability}, where the author showed for each notion of forcing the existence of a uniformly $\Sigma^0_n$-preserving forcing question, without naming explicitly this concept.

\begin{proposition}\label[proposition]{prop-uniform-preservation-sigman}
Let~$(\PP, \leq)$ be a notion of forcing with a uniformly $\Sigma^0_n$-preserving forcing question.
Then for every non-$\Sigma^0_n$ set~$C$ and every sufficiently generic set~$G$ for this notion of forcing, $C$ is not $\Sigma^0_n(G)$.
\end{proposition}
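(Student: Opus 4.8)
The plan is to show that for each $\Sigma^0_n$ index $e$, the set of conditions forcing "$\Phi_e^{G}$ does not compute $C$ correctly" — more precisely, forcing that $W_e^G \neq C$ where $W_e^G$ denotes the $e$-th $\Sigma^0_n(G)$ set — is dense, and that density is witnessed uniformly enough that a sufficiently generic $G$ meets all such sets. Since every $\Sigma^0_n(G)$ set arises as some $W_e^G$, meeting all these dense sets guarantees $C$ is not $\Sigma^0_n(G)$.

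First I would fix a condition $c$ and a $\Sigma^0_n$-index $e$, so that $W_e^G$ is presented by a uniform sequence of $\Sigma^0_n$ formulas $(\varphi_k(G))_{k \in \NN}$ with the intent that $k \in W_e^G \iff \varphi_k(G)$ holds. I want to find an extension $d \leq c$ and a witness $k$ such that $d$ decides $\varphi_k(G)$ in the direction opposite to membership of $k$ in $C$; such a $d$ forces $W_e^G \neq C$. To locate such a $k$, consider the set $S = \{ k : c \qvdash \varphi_k(G) \}$. By the uniform $\Sigma^0_n$-preservation hypothesis, $S$ is $\Sigma^0_n$ (uniformly in $c$ and $e$). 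Since $C$ is not $\Sigma^0_n$, we have $S \neq C$, so there is some $k$ witnessing the difference: either $k \in S \setminus C$ or $k \in C \setminus S$. In the first case, $c \qvdash \varphi_k(G)$, so by clause (1) of the forcing question there is $d \leq c$ with $d \Vdash \varphi_k(G)$, i.e., $d$ forces $k \in W_e^G$ while $k \notin C$. In the second case, $c \nqvdash \varphi_k(G)$, so by clause (2) there is $d \leq c$ with $d \Vdash \neg \varphi_k(G)$, i.e., $d$ forces $k \notin W_e^G$ while $k \in C$. Either way $d \Vdash W_e^G \neq C$, and such $d$ exist below every $c$, so the set $D_e = \{ d : d \Vdash W_e^G \neq C \}$ is dense.

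Then I would invoke genericity: a sufficiently generic $G$ meets $D_e$ for every $e$, and for such $G$, every $\Sigma^0_n(G)$ set $W_e^G$ differs from $C$, hence $C$ is not $\Sigma^0_n(G)$. One should also note the standard point that the forcing relation $\Vdash$ for $\Sigma^0_n$ and $\Pi^0_n$ formulas behaves correctly along a sufficiently generic filter — i.e., $\varphi(G_\F)$ holds iff some condition in $\F$ forces $\varphi$ — which is what lets us pass from "$d \Vdash W_e^G \neq C$" to the genuine inequality for the constructed $G$.

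The main obstacle is bookkeeping about what "sufficiently generic" must mean and ensuring the density argument is not circular: one needs the forcing relation to be well-behaved enough that forcing $W_e^G \neq C$ really does entail the inequality for the generic object, and one must confirm that the enumeration of $\Sigma^0_n(G)$ sets by uniform sequences of $\Sigma^0_n$ formulas is exhaustive (any $\Sigma^0_n(G)$ set is $\Sigma^0_n(X \oplus G)$ for a fixed parameter, and such formulas form a uniform list). Given the hypotheses already established in the excerpt, these are routine, and the crux is simply the comparison $S \neq C$ furnished by $C \notin \Sigma^0_n$ together with the two clauses of the forcing question.
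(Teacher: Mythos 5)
Your proof is correct and takes essentially the same approach as the paper: define $W=\{a : c \qvdash \varphi(G,a)\}$, observe it is $\Sigma^0_n$ by the uniform preservation hypothesis, pick a witness in $W\Delta C$, and extend $c$ via the two clauses of the forcing question to force a disagreement. The paper phrases this via a single $\Sigma^0_n$ formula $\varphi(G,x)$ with a free number variable rather than an explicit index $e$ for $W_e^G$, but that is only a notational difference.
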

\begin{proof}
Fix a $\Sigma^0_n$ formula $\varphi(G, x)$ with one free first-order variable~$x$. Let~$D_\varphi \subseteq \PP$ be the set of conditions~$c$ such that either $c \Vdash \varphi(G, a)$ for some~$a \not \in C$, or $c \Vdash \neg \varphi(G, a)$ for some~$a \in C$. Let us show that~$D_\varphi$ is dense.
Given a condition~$c \in \PP$, let~$W = \{ a \in \NN : c \qvdash \varphi(G, a) \}$. Since the forcing question is uniformly $\Sigma^0_n$-preserving, then the set~$W$ is $\Sigma^0_n$, hence~$C \neq W$. Let~$a \in C \Delta W$, the symmetric difference of~$C$ and $W$.
\begin{itemize}
	\item If~$a \in W \setminus C$, then by definition, $c \qvdash \varphi(G, a)$, so by property (1) of the forcing question, there is an extension~$d \leq c$ such that $d \Vdash \varphi(G, a)$.
	\item If~$a \in C \setminus W$, then by definition, $c \nqvdash \varphi(G, a)$. By property (2) of the forcing question, $c \qvdash \neg \varphi(G, a)$, and by property (1), there is an extension $d \leq c$ such that $d \Vdash \neg \varphi(G, a)$.
\end{itemize}
In both cases, $d \in D_\varphi$, so $D_\varphi$ is dense.
If $\F$ is a sufficiently generic filter, it will intersect $D_\varphi$ for every $\Sigma^0_n$ formula $\varphi(G, x)$, hence, letting~$G$ be the set induced by~$\F$, $C$ will not be~$\Sigma^0_n(G)$.
\end{proof}

The construction of low${}_n$ solutions are often effectivizations of the forcing argument,
either by constructing a $\emptyset^{(n)}$-computable filter sufficiently generic for deciding $\Sigma^0_n(G)$ formulas, or by constructing, with any PA degree~$P$ over~$\emptyset^{(n-1)}$, a $P$-computable filter~$G_\F$ sufficiently generic for deciding $\Sigma^0_{n-1}(G)$ formulas. In the latter case, using the low basis theorem relativized to~$\emptyset^{(n-1)}$, there exists such a PA degree~$P$ over~$\emptyset^{(n-1)}$ such that $P' \leq_T \emptyset^{(n)}$, thus $\Sigma^0_n$ properties of~$G_\F$ can be decided thanks to~$\emptyset^{(n)}$.

\subsection{Combinatorics of~$\EM$}\label[section]{sect:combinatorics-em}

In order to understand the design of the notion of forcing for this article, it is important to get familiar with the combinatorics of the Erd\H{o}s-Moser theorem. Lerman, Solomon and Towsner~\cite{lerman2013separating} analyzed the basic combinatorial ideas essential to the computable study of the theorem.

A transitive tournament $T$ over a domain~$A$ can be seen as a linear order $(A, \leq_T)$ 
defined by~$a \leq_T b$ iff $a = b$ or $T(a, b)$. This interpretation should be kept in mind throughout the article. For convenience, we shall always consider that the tournament contains two end-points $-\infty$ and $+\infty$, that is, such that $T(-\infty, x)$ and $T(x, +\infty)$ holds for every~$x$.

\begin{definition}[\cite{lerman2013separating}]
Fix a tournament~$T$ over a domain~$A$.
\begin{itemize}
    \item[(1)] The \emph{interval} $(a, b)$ between $a, b \in A \cup \{-\infty, +\infty\}$ is the set of points $x \in A$ such that $T(a, x)$ and $T(x, b)$ holds. 
    \item[(2)] Given a finite $T$-transitive subset~$F \subseteq A$ and $a, b \in F \cup \{-\infty, +\infty\}$, the interval $(a, b)$ is \emph{minimal} in~$F$ if $(a, b) \cap F = \emptyset$.
\end{itemize}
\end{definition}

Fix a tournament~$T$ over~$\omega$. Any finite $T$-transitive set~$F$ is not necessarily extendible into an infinite solution. Indeed, maybe there exist some~$a, b \in F$ such that $T(a, b)$ holds, but $T(b, x)$ and $T(x, a)$ both hold for cofinitely many~$x$. We shall therefore work with Mathias conditions $(\sigma, X)$ where $\sigma$ is a $T$-transitive finite set, with some extra structure which will guarantee that $\sigma$ is extendible into an infinite solution. This yields the following definition (due to Patey~\cite[Definition 5.7]{patey2015degrees}).

\begin{definition}[\cite{patey2015degrees}]
An \emph{$\EM$-condition} for~$T$ is a Mathias condition $(\sigma, X)$ such that
\begin{enumerate}
    \item for all $y \in X$, $\sigma \cup \{ y \}$ is $T$-transitive
    \item $X$ is included in a minimal $T$-interval of~$\sigma$
\end{enumerate}
\end{definition}

Actually, the second property can be obtained from the first one by a simple application of the infinite pigeonhole principle. Indeed, there are only finitely many minimal $T$-intervals in~$\sigma$, and each element of~$X$ belongs to exactly one of them. The notion of condition extension is the usual Mathias extension.

To simplify notation, given two disjoint sets~$F$ and $E$, we write $F \to_T E$ if for every~$a \in F$ and $b \in E$, $T(a, b)$ holds. One essential feature in the understanding of the computational content of a theorem is to understand the combinatorics necessary to extend a partial solution with an arbitrarily large number of elements in one block. In the case of the Erd\H{o}s-Moser theorem, the following lemma contains its core combinatorics.

\begin{lemma}[\cite{patey2015degrees}]\label[lemma]{lem:em-condition-combi}
Fix an $\EM$-condition $c = (\sigma, X)$ for a tournament~$T$, an infinite subset $Y \subseteq X$ and a finite $T$-transitive set~$\rho \subseteq X$ such that $\max \rho < \min Y$ and $[\rho \to_T Y \vee Y \to_T \rho]$. Then $(\sigma \cup \rho, Y)$ is a valid extension of~$c$.
\end{lemma}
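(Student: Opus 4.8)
\textbf{Proof plan for \Cref{lem:em-condition-combi}.}
The goal is to verify that $(\sigma \cup \rho, Y)$ is again an $\EM$-condition for $T$, and that it extends $c = (\sigma, X)$ in the Mathias sense. The Mathias extension part is immediate: we need $\sigma \cup \rho \supseteq \sigma$ (clear), $\max(\sigma \cup \rho)$ compatible with $\min Y$ (since $\rho \subseteq X$ and $X$ is contained in a minimal $T$-interval of $\sigma$, every element of $\rho$ lies above $\max\sigma$ in the relevant sense, and $\max\rho < \min Y$ by hypothesis), and $Y \subseteq X$ (given). So the entire content is in re-establishing the two defining clauses of an $\EM$-condition for the pair $(\sigma \cup \rho, Y)$.

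For clause (1), I must show that $(\sigma \cup \rho) \cup \{y\}$ is $T$-transitive for every $y \in Y$. Here I would split the potential transitivity triangles $\{a, b, c\} \subseteq (\sigma \cup \rho) \cup \{y\}$ according to where the vertices fall. If all three lie in $\sigma \cup \{y\}$ or all three in $\rho \cup \{y\}$ (note $\rho$ itself is $T$-transitive, and $\rho \cup \{y\} \subseteq X$ with $\sigma \cup \{y\}$ transitive for $y \in X \supseteq Y$), transitivity is already known. The remaining cases mix a vertex of $\sigma$ with a vertex of $\rho$, possibly with $y$. This is exactly where the hypothesis $[\rho \to_T Y \vee Y \to_T \rho]$ enters: it pins down the direction of the edges between $\rho$ and $Y$ uniformly, and combined with the fact that $X$ (hence $\rho$ and $Y$) sits inside one \emph{minimal} $T$-interval of $\sigma$ — so every element of $\sigma$ relates to every element of $\rho \cup Y$ in a way dictated solely by the endpoints of that interval — one checks case by case that no intransitive triple $a \to b \to c$, $c \to a$ can arise. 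The cleanest way to organize this is to recall the linear-order picture: a minimal interval of $\sigma$ looks like a "gap" in the linear order $(\sigma, \leq_T)$, so $\sigma$ contributes a totally ordered block on each side, and adding $\rho \cup \{y\}$ inside the gap with a consistent orientation between $\rho$ and $y$ keeps the whole thing a linear order.

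For clause (2), I need $Y$ to be contained in a minimal $T$-interval of $\sigma \cup \rho$. Since $Y \subseteq X$ and $X$ lies in a minimal $T$-interval $(a, b)$ of $\sigma$, and since $\rho \subseteq X$ with $\max\rho < \min Y$, the new endpoints become $a' = \max_{\leq_T}(\{a\} \cup \rho)$ on the left and $b$ on the right (using the orientation hypothesis to see that if $\rho \to_T Y$ then the elements of $\rho$ sit $T$-below all of $Y$, so the relevant minimal interval of $\sigma \cup \rho$ containing $Y$ has its left endpoint among $\rho$; the case $Y \to_T \rho$ is symmetric with $\rho$ supplying the right endpoint). One then checks $(a', b) \cap (\sigma \cup \rho) = \emptyset$: nothing from $\sigma$ is in $(a,b)$ by minimality in $\sigma$, and nothing from $\rho$ is in the sub-interval by the choice of $a'$ (or $b'$).

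\textbf{Main obstacle.} The only real subtlety is the bookkeeping in clause (1): making sure \emph{every} mixed triple is covered and that the disjunction $[\rho \to_T Y \vee Y \to_T \rho]$ — which constrains edges between $\rho$ and $Y$ but says nothing directly about edges within $\rho$ or between $\sigma$ and $\rho$ — is actually enough. The resolution is that edges within $\rho$ are controlled by $\rho$ being $T$-transitive, edges between $\sigma$ and $\rho\cup Y$ are controlled by $\rho \cup Y \subseteq X$ sitting in a single minimal interval (so $\sigma$ "sees" all of $\rho \cup \{y\}$ the same way it sees any single point of $X$, for which $\sigma \cup \{pt\}$ is transitive), and edges between $\rho$ and $y$ are exactly what the disjunction hypothesis fixes. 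I expect this to be a short, routine case analysis once the linear-order reformulation is in hand, which is presumably why \cite{patey2015degrees} treats it as a lemma rather than a theorem.
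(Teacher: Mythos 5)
Your proof is correct and follows the same $3$-cycle case analysis the paper itself uses in its proof of the reformulated \Cref{lem:extension} (the present lemma being cited from \cite{patey2015degrees} rather than reproved in the text). One small slip: in the parenthetical justifying transitivity of $\rho \cup \{y\}$, the stated reason ($\rho \cup \{y\} \subseteq X$) is not the operative one — the correct reason, which you do supply later in the ``main obstacle'' paragraph, is that $\rho$ is $T$-transitive and the disjunction $\rho \to_T Y \vee Y \to_T \rho$ makes $y$ relate uniformly to every element of $\rho$.
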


Suppose one wants to design a good forcing question for deciding $\Sigma^0_1$ formulas with this notion of forcing. To simplify the situation, assume first that the tournament~$T$ is \emph{stable}, that if, for every~$a$, either $(\forall^\infty b) T(a, b)$ holds, or $(\forall^\infty b)T(b, a)$ holds. In other words, each element admits a limit behavior with respect to~$T$.
Let $f : \omega \to 2$ be the limit behavior of~$T$, that is, $f(a) = 0$ iff $\forall^\infty b T(a, b)$ and $f(a) = 1$ otherwise. The following naive definition does not satisfy the desired definitional properties:

\begin{definition}
Let $c = (\sigma,X) $ be an EM-condition, $n$ be an integer, and $e$ be a Turing index. Let $c \qvdash \Phi_e^G(n)\downarrow$ hold
    if there exists a finite $f$-homogeneous $T$-transitive set $\tau \subseteq X$ such that $\Phi_e^{\sigma \cup \tau}(n) \downarrow$.
\end{definition}

By \Cref{lem:em-condition-combi}, this is a valid forcing condition, in that if it holds, then there exists an extension forcing $\Phi_e^G(n)\downarrow$, and otherwise, there exists an extension forcing $\Phi_e^G(n)\uparrow$. From a definitional viewpoint, the previous relation is $\Sigma^0_1(X \oplus T \oplus f)$. However, the tournament~$T$ and its limit behavior $f$ are strongly non-computable, and may even compute the set~$B$ that we want to avoid. The solution to get rid of these parameters is to make an over-approximation:

\begin{definition}\label[definition]{def:forcing-question-em-level0}
Let $c = (\sigma,X) $ be an EM-condition, $n$ be an integer, and $e$ be a Turing index. Let  $c \qvdash \Phi_e^G(n)\downarrow$ hold
    if for every tournament~$R$ and every function $g : \NN \to 2$, there is a finite $g$-homogeneous $R$-transitive set~$\tau \subseteq X$ such that $\Phi_e^{\sigma \cup \tau}(n) \downarrow$.
\end{definition}

At first sight, an overapproximation yields a forcing question with even worse definitional properties since it contains a universal second-order quantification. However, thanks to compactness, the forcing question is actually $\Sigma^0_1(X)$, as it is equivalent to the following definition:

\begin{quote}
$c \qvdash \Phi_e^G(n)\downarrow$
    if there exists some threshold $t$ such that for every tournament~$R$ over~$\{0, \dots, t\}$ and every function $g : \{0, \dots, t\} \to 2$, there is a finite $g$-homogeneous $R$-transitive set~$\tau \subseteq X$ such that $\Phi_e^{\sigma \cup \tau}(n) \downarrow$.
\end{quote}

If the forcing question holds, then by letting $R = T$ and $g = f$, it is clear that there exists an extension forcing~$\Phi_e^G(n)\downarrow$. On the other hand, if the forcing question does not hold, then the witness of failure might be some tournaments~$R$ and some colorings~$f$ which are unrelated to~$T$ and $g$. This is where the combinatorics of Ramsey theory comes into play.

\begin{lemma}\label[lemma]{lem:questionf}
    Let $c = (\sigma,X) $ be an $\EM$-condition, $n$ be an integer and $e$ be a Turing index. 
    \begin{itemize}
        \item[(1)] If $c \qvdash \Phi_e^G(n)\downarrow$, then there exists $d \leq c$ such that $d \Vdash \Phi_e^G(n) \downarrow$.
        \item[(2)] Else, if $c \nqvdash \Phi_e^G(n) \downarrow$, then there exists $d \leq c$ such that $d \Vdash \Phi_e^G(n) \uparrow$.
    \end{itemize}
\end{lemma}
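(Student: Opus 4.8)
The plan is to prove the two items by a case analysis on the outcome of the forcing question from \Cref{def:forcing-question-em-level0}, using the compactness reformulation given in the quote and then exploiting Ramsey-theoretic combinatorics in the negative case. Item~(1) is the easy direction: assuming $c \qvdash \Phi_e^G(n)\downarrow$, apply the definition with the genuine tournament $R = T\uh X$ and the genuine limit function $g = f\uh X$ (in the stable case) to obtain a finite $g$-homogeneous, $T$-transitive set $\tau \subseteq X$ with $\Phi_e^{\sigma\cup\tau}(n)\downarrow$. Since $\tau$ is $f$-homogeneous and $T$-transitive with $\max \tau < \min(X \setminus \tau)$ (after shrinking appropriately) and $\tau$ is homogeneous for $f$, the colour of $f$ on $\tau$ together with the limit behaviour forces either $\tau \to_T Y$ or $Y \to_T \tau$ for some infinite $Y \subseteq X$ with $\max\tau < \min Y$; then \Cref{lem:em-condition-combi} shows $d = (\sigma\cup\tau, Y)$ is a valid $\EM$-extension of $c$, and $d \Vdash \Phi_e^G(n)\downarrow$ because every set realising $d$ contains $\sigma\cup\tau$ as an initial segment.

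For item~(2), suppose $c \nqvdash \Phi_e^G(n)\downarrow$. By the compactness reformulation, for every threshold $t$ there is a tournament $R$ on $\{0,\dots,t\}$ and a colouring $g : \{0,\dots,t\}\to 2$ such that \emph{no} finite $g$-homogeneous $R$-transitive $\tau\subseteq X\cap\{0,\dots,t\}$ has $\Phi_e^{\sigma\cup\tau}(n)\downarrow$. We want to build $d \leq c$ with $d \Vdash \Phi_e^G(n)\uparrow$; equivalently, we need an infinite $Y \subseteq X$ (together with a finite $T$-transitive $\rho\subseteq X$ making $(\sigma\cup\rho, Y)$ an $\EM$-condition) such that for every finite $T$-transitive $\mu\subseteq Y$, $\Phi_e^{\sigma\cup\rho\cup\mu}(n)\uparrow$. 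The key observation is that $T$-transitivity of $\mu\subseteq Y$ will, once $Y$ is chosen $f$-homogeneous, translate into $g'$-homogeneity for an appropriate colouring, so that the failure witnesses for $R, g$ can be transferred to the real tournament $T$. Concretely, restrict attention to the $f$-homogeneous infinite subset of $X$ (one colour is infinite by the pigeonhole principle); call it $X'$. On $X'$, by \Cref{lem:em-condition-combi} applied repeatedly, any finite $T$-transitive $\mu\subseteq X'$ is an admissible block extension. Now use the negation hypothesis: since it fails at every threshold $t$, a compactness/König's lemma argument on the (computable-in-$X$) tree of candidate pairs $(R,g)$ avoiding convergence produces a single infinite tournament $R_\infty$ and colouring $g_\infty$ on all of $X$ such that no finite $g_\infty$-homogeneous $R_\infty$-transitive $\tau\subseteq X$ makes $\Phi_e^{\sigma\cup\tau}(n)\downarrow$. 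Then take $Y$ to be an infinite subset of $X'$ that is simultaneously $g_\infty$-homogeneous and $R_\infty$-transitive — such a $Y$ exists because $\EM$ (equivalently, the relevant instance of the pigeonhole principle followed by transitivity extraction) holds, though of course not effectively; this is where one either invokes a suitable lemma or, better, folds the construction into the generic so that only finitely much of $Y$ is ever needed. Finally, set $d = (\sigma, Y)$ (after the standard shrink to land inside a minimal $T$-interval): any finite $T$-transitive $\mu\subseteq Y$ is both $R_\infty$-transitive and $g_\infty$-homogeneous, hence $\Phi_e^{\sigma\cup\mu}(n)\uparrow$, so $d\Vdash \Phi_e^G(n)\uparrow$.

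I expect the main obstacle to be the transfer step in item~(2): making precise why a witness of failure for \emph{arbitrary} $R, g$ can be pushed down to the \emph{actual} tournament $T$ restricted to a well-chosen infinite $Y$. The subtlety is that the over-approximation quantifies over all tournaments and colourings, so a priori the failure could be caused by pathological $R, g$ with nothing to do with $T$; the resolution is precisely that the $\EM$-condition structure (property~(1) of $\EM$-conditions, plus \Cref{lem:em-condition-combi}) guarantees that on the relevant $f$-homogeneous block, $T$-transitivity is no more restrictive than transitivity for an arbitrary tournament, so the set of "good" block extensions of $c$ is, up to the choice of $Y$, the same combinatorial object as the worst-case one the forcing question considers. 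I would be careful to state the compactness step as producing genuinely infinite $R_\infty, g_\infty$ (via König's lemma on a tree whose nodes are finite approximations avoiding convergence, which is an $X$-computably bounded tree), and to handle the non-stable case either by passing to a cohesive subset of $X$ for $T$ at the outset — which makes $T\uh X$ stable and $f$ well-defined — or by citing the corresponding reduction already set up in the paper's framework.
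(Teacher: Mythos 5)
Your Item~(2) reaches the right conclusion---pick an infinite $Y \subseteq X$ that is simultaneously $g_\infty$-homogeneous and $R_\infty$-transitive and set $d := (\sigma, Y)$---but the surrounding reasoning misdescribes what makes this work and adds steps that are not needed. There is no ``transfer'' of $T$-transitivity into $g'$-homogeneity, and the preliminary restriction to an $f$-homogeneous $X'$ is irrelevant. The point is simply that \emph{every} finite $\tau \subseteq Y$ inherits $g_\infty$-homogeneity and $R_\infty$-transitivity from $Y$ itself, so the failure hypothesis for $(R_\infty, g_\infty)$ applies to every $\tau \subseteq Y$ regardless of $T$, and in particular to the $T$-transitive ones; since $Y \subseteq X$, the pair $(\sigma,Y)$ is automatically an $\EM$-condition refining $c$, and we are done. (The K\"onig's-lemma detour is also unnecessary: the negation of the \emph{un}-compactified form of the forcing question already hands you a single infinite pair $(R, h)$ on $\NN$, which is what the paper uses directly.)

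The genuine gap is in Item~(1), general (non-stable) case. Passing to a $T$-cohesive $X^* \subseteq X$ ``at the outset'' does not preserve the hypothesis $c \qvdash \Phi_e^G(n)\downarrow$: that question asserts the \emph{existence} of a suitable $\tau \subseteq X$, so shrinking the reservoir to $X^*$ strictly reduces the pool of candidate $\tau$'s and can flip the answer from positive to negative. The correct order of operations is to first invoke compactness to fix a threshold $t$ so that all candidate $\tau$'s live in $X \cap \{0,\dots,t\}$, and \emph{only then} to thin $X$ \emph{above} $t$: color each $y \in X$ with $y > t$ by the function $g_y \colon \{0,\dots,t\}\to 2$, $g_y(x) = 1$ iff $T(x,y)$, a $2^t$-valued coloring, and apply the pigeonhole principle to get an infinite homogeneous $Y \subseteq X \setminus \{0,\dots,t\}$. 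This keeps $X \cap \{0,\dots,t\}$ intact, so the compactness witnesses survive; the common color $g$ of the $g_y$'s on $Y$ is precisely what you instantiate in the forcing question to extract $\tau$, and $g$-homogeneity of $\tau$ then gives $\tau \to_T Y$ or $Y \to_T \tau$, so \Cref{lem:em-condition-combi} yields the valid extension $(\sigma\cup\tau, Y)$. Your proposal skips the threshold step and therefore thins in the wrong place.
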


\begin{proof}[Proof in the stable case]
We prove each point :
    \begin{itemize}
        \item[(1)] If $c \qvdash \Phi_e^G(n)\downarrow$, letting~$R = T$ and $g = f$, there exists a finite $f$-homogeneous $T$-transitive set $\tau \subseteq X$ such that $\Phi_e^{\sigma \cup \tau}(n) \downarrow$. By choice of~$f$, there exists some $t \in \omega$ such that $\tau \to_T X \setminus \{0, \dots, t\}$ or $X \setminus \{0, \dots, t\} \to_T \tau$. Thus, by 
        \Cref{lem:em-condition-combi}, the pair $d := (\sigma \cup \tau, X \setminus \{0, \dots, t\})$ is an EM-condition. Note that $ d \leq c$ and that $d \Vdash \Phi_e^G(n)\downarrow$.

        \item[(2)] If $c \nqvdash \Phi_e^G(n) \downarrow$, then there exists a coloring $h : \NN \to 2$ and a tournament~$R$ such that for all finite $h$-homogeneous and $R$-transitive set $\tau \subseteq X$, $\Phi_e^{\sigma \cup \tau}(n) \uparrow$. By the pigeonhole principle and the Erd\H{o}s-Moser theorem restricted to~$X$, there exists an infinite subset~$Y \subseteq X$ which is both~$h$-homogeneous and $R$-transitive. The pair $d := (\sigma,Y)$ is a valid EM-condition such that $d \Vdash \Phi_e^G(e) \uparrow$.
        \end{itemize}
\end{proof}

Whenever the tournament is not stable, the situation seems more complicated as there is no clear choice of~$f$. Surprisingly, the previous forcing question still holds, but with a more subtle proof in the first case. The idea is the following: in the first case, by compactness, the finite extension candidate is bounded by a threshold. One can then restrict the reservoir, so that every element below the threshold has a limit behavior with respect to the new reservoir, and then act like in the stable case.

\begin{proof}[Proof in the general case]
We only prove the first case, as the second case did not involve stability of the tournament.
\begin{itemize}
        \item[(1)] If $c \qvdash \Phi_e^G(n)\downarrow$, by compactness, there exists some threshold $t$ such that for every tournament~$R$ over~$\{0, \dots, t\}$ and every function $g : \{0, \dots, t\} \to 2$, there is a finite $g$-homogeneous $R$-transitive set~$\tau \subseteq X$ such that $\Phi_e^{\sigma \cup \tau}(n) \downarrow$. 

        For every element~$y \in X \setminus \{0, \dots, t\}$, one can associate a function $g_y : \{0, \dots, t\} \to 2$ defined by~$g_y(x) = 1$ iff $T(x, y) = 1$.
        Since there are $2^t$ many such functions, then the function $y \mapsto g_y$ is a finite coloring of the reservoir, so by the infinite pigeonhole principle, there exists an infinite subset~$Y \subseteq X \setminus \{0, \dots, t\}$ which is homogeneous for some color~$g : \{0, \dots, t\} \to 2$. In other words, for every $g$-homogeneous set~$\tau \subseteq \{0, \dots, t\}$, either $\tau \to_T Y$, or $Y \to_T \tau$.

        Letting $R = T$, there is a finite $g$-homogeneous $T$-transitive set ~$\tau \subseteq X \cap \{0, \dots, t\}$ such that $\Phi_e^{\sigma \cup \tau}(n) \downarrow$. Thus, by 
        \Cref{lem:em-condition-combi}, the pair $d := (\sigma \cup \tau, Y)$ is an EM-condition. Note that $d \leq c$ and that $d \Vdash \Phi_e^G(n)\downarrow$.
\end{itemize}
\end{proof}

Together with the general discussion of \Cref{sect:picture-forcing-question} about forcing questions,
this section constitutes a proof that $\EM$ admits strong cone avoidance.

\begin{remark}
The bottom line of this section is the following: The combinatorics of the Erd\H{o}s-Moser theorem involve the pigeonhole principle, in that in order to ensure the extendibility of a finite $T$-transitive set, one needs to ensure that it is homogeneous for the appropriate instance of $\RT^1_2$. This 2-coloring represents the limit behavior of the tournament. Whenever the tournament is not stable, the choice of the 2-coloring is not clear ahead of time, and the colorings must be universally quantified.

Last, note that the use of $\RT^1_2$ in the proof of $\EM$ is not overkill, in that given a 2-coloring $f : \NN \to 2$, one can define a tournament $T$ by $T(x, y)$ iff $[x < y \leftrightarrow f(x) = f(y)]$. Then any infinite transitive subtournament is, up to finite changes, $f$-homogeneous.
\end{remark}

\subsection{Iterated jump control of EM forcing}\label[section]{sect:iterated-jump-picture}

In computability-theoretic forcing, one usually forces a $\Sigma^0_1/\Pi^0_1$ property in a strong sense: if $c \Vdash \varphi(G)$ for $\varphi \in \{\Sigma^0_1, \Pi^0_1\}$, then $\varphi(G_\F)$ actually holds for every filter~$\F$ containing~$c$. The situation becomes significantly more complicated when considering $\Sigma^0_2/\Pi^0_2$ formulas. 

A $\Pi^0_2$ formula $(\forall x)( \exists y) \varphi(G, x, y)$ can be considered as a countable collection of $\Sigma^0_1$ formulas $\{ (\exists y) \varphi(G, n, y) : n \in \NN \}$. Such a formula cannot usually be forced in a strong sense. The relation $c \Vdash (\forall x )(\exists y) \varphi(G, x, y)$ holds iff for every $x \in \NN$, the set of conditions forcing $(\exists y) \varphi(G, x, y)$ is dense below~$c$. This way, every sufficiently generic filter containing $c$ will also contain a condition forcing~$(\exists y )\varphi(G, x, y)$ for each~$x \in \NN$, thus the property $(\forall x )(\exists y) \varphi(G_\F, x, y)$ holds for every sufficiently generic filter containing~$c$.

Stating the density of a collection of conditions can be a definitionally complex statement, depending on the complexity of the notion of forcing. In some simple cases, such as Cohen forcing, the forcing relation for $\Pi^0_2(G)$ formulas is $\Pi^0_2$, yielding a good forcing question.

Variants of Mathias forcing do not behave that well. Indeed, the statement of density requires universal and existential quantification on the conditions, hence on the reservoirs, which are second-order objects. Actually, the approach of Mathias forcing provably fails: 

\begin{lemma}[Folklore]
The set~$\emptyset''$ is $\Pi^0_2(G_\F)$ for every sufficiently generic filter~$\F$ for Mathias forcing with computable reservoirs.
\end{lemma}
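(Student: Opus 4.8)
The plan is to produce a single computable instance of Mathias forcing (with computable reservoirs) for which a particular $\Pi^0_2$ formula $\varphi(G)$, uniformly chosen, is forced to hold by \emph{every} condition and is equivalent to membership in $\emptyset''$, so that $\emptyset''$ becomes $\Pi^0_2(G_\F)$ for every sufficiently generic $\F$. The key observation is that a Mathias condition $(\sigma, X)$ with $X$ computable and infinite leaves the generic object $G_\F$ with an unavoidable feature: $G_\F \subseteq^* X$ for the final reservoir, but more usefully, for \emph{any} computable infinite set $X$, the generic $G_\F$ will be an infinite subset of $\NN$ whose principal function dominates any fixed computable function once we arrange the reservoirs to thin out fast enough. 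That is the wrong direction, though; what we actually want is that $\emptyset''$ is \emph{computed by} a $\Pi^0_2$-in-$G_\F$ formula, i.e. $n \in \emptyset''$ iff $\psi(G_\F, n)$ for a single $\Pi^0_2$ formula $\psi$.

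Concretely, first I would recall that $\emptyset''$ has a $\Pi^0_2$ presentation: $n \in \emptyset''$ iff $(\forall s)(\exists t > s)\, R(n, t)$ for a suitable computable predicate $R$ (using the standard fact that $\emptyset''$ is the set of indices $n$ such that $\Phi_n^{\emptyset'}(n)\!\downarrow$, together with a computable approximation to $\emptyset'$, gives a $\Pi^0_2$, indeed properly $\Sigma^0_2$-complete-complement, description — one must be a little careful and take the genuinely $\Pi^0_2$ side). Next, the core step: I would show that for Mathias forcing with computable reservoirs, the forcing relation $c \Vdash (\exists y)\theta(G, y)$ for a $\Sigma^0_1$ formula $\theta$ is itself decidable with a $\emptyset'$ oracle uniformly in $c$ and $\theta$ — because deciding whether $(\sigma, X) \Vdash (\exists y)\theta$ amounts to asking whether there is a finite extension $\tau \subseteq X$ of $\sigma$ with $\theta(\sigma \cup \tau)$ witnessed, which is $\Sigma^0_1$ in $X \leq_T \emptyset$, hence $\Sigma^0_1$, and its negation (no finite extension works) is $\Pi^0_1$. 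So the generic's $\Sigma^0_1$ diagram is $\emptyset'$-computable below any condition, and by iterating, its $\Sigma^0_2$ diagram is $\emptyset''$-computable; conversely the construction of a sufficiently generic filter can be carried out computably in $\emptyset''$, so $G_\F \leq_T \emptyset''$ is \emph{not} what we want either — we want the reverse reduction to show up as a $\Pi^0_2(G_\F)$ formula.

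The actual mechanism, and the main obstacle, is to engineer the reservoirs so that $G_\F$ \emph{codes} $\emptyset''$ in a $\Pi^0_2$ way. The standard folklore argument does this via a \emph{diagonal} trick: one shows that for every sufficiently generic $G$, and for each $n$, the statement ``$G$ meets the computable set $A_n$ infinitely often'' is forced to be equivalent to $n \in \emptyset''$, where $(A_n)$ is a computable sequence of sets designed so that $A_n$ is cofinite if $n \in \emptyset''$ and $A_n$ is coinfinite-in-a-controlled-way otherwise. More precisely, I would take $A_n = \{t : R(n,t)\}$ with $R$ the $\Pi^0_2$ predicate above reorganized so that $A_n$ is infinite iff $n \in \emptyset''$, and observe that genericity forces $G_\F$ to be infinitely often inside \emph{every} computable set that every condition's reservoir meets infinitely — but a computable reservoir $X$ always meets $A_n$ infinitely exactly when $A_n$ is infinite, and a density argument then shows $G_\F \cap A_n$ is infinite iff $A_n$ is infinite iff $n \in \emptyset''$, while $G_\F \cap A_n$ finite is forced whenever $A_n$ is finite. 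Hence ``$G_\F \cap A_n$ is infinite'', which is $\Pi^0_2(G_\F)$ uniformly in $n$ (it reads $(\forall k)(\exists t > k)(t \in G_\F \wedge R'(n,t))$ for computable $R'$ defining $A_n$), defines $\emptyset''$, giving $\emptyset'' \leq_m$ a fixed $\Pi^0_2(G_\F)$ set and therefore $\emptyset''$ is $\Pi^0_2(G_\F)$. The delicate point to get right is the density claim: one must check that below any condition $(\sigma, X)$, if $A_n$ is infinite then the set of conditions $(\sigma', X')$ with $\#(F_{\sigma'} \cap A_n) > \#(F_\sigma \cap A_n)$ is dense — this is immediate since $X \cap A_n$ is infinite and computable, so one can extend $\sigma$ by an element of $X \cap A_n$ while keeping a computable reservoir; and if $A_n$ is finite then trivially no condition can force $G_\F$ to meet it beyond $\max A_n$. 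I expect the only real subtlety is bookkeeping the uniformity (a single $\Pi^0_2$ formula with parameter $n$, not a family of formulas) and ensuring the $\Pi^0_2$ presentation of $\emptyset''$ is chosen on the correct side of the hierarchy so that ``$A_n$ infinite'' lands exactly at $n \in \emptyset''$ rather than its complement.
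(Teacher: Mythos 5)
Your proposal has two genuine gaps, and it also explicitly dismisses the approach that actually works. First, a complexity‑class confusion: $\emptyset''$ is $\Sigma^0_2$-complete, not $\Pi^0_2$, so there is no computable predicate $R$ with $n \in \emptyset''$ iff $(\forall s)(\exists t>s)\,R(n,t)$; the side that admits such a description is $\overline{\emptyset''}$. But $\overline{\emptyset''}$ is already $\Pi^0_2$ unrelativized, so ``$G_\F \cap A_n$ infinite iff $n \in \overline{\emptyset''}$'' would only re-derive a trivial fact. To say something nontrivial you must explain why relativizing to $G_\F$ moves $\emptyset''$ from $\Sigma^0_2 \setminus \Pi^0_2$ into $\Pi^0_2(G_\F)$, and the ``$A_n$ infinite iff $n$ in the target'' encoding cannot do that for any uniformly computable family $(A_n)$, since $\{n : A_n \text{ infinite}\}$ is always $\Pi^0_2$.

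Second, the density step is false. You claim that below any condition $(\sigma, X)$ with $X$ computable and any infinite computable $A_n$, one can extend $\sigma$ by an element of $X \cap A_n$ because ``$X \cap A_n$ is infinite and computable.'' But two infinite computable sets can have finite (even empty) intersection — take $A_n$ the odds and $X$ the evens. Indeed, for any infinite co-infinite computable set $A$, the computable Mathias condition $(\emptyset, \overline{A})$ forces $G_\F \cap A = \emptyset$, so no fixed computable $A$ (other than a cofinite one) is met infinitely by every sufficiently generic $G_\F$.

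Finally, you noticed the domination route and dismissed it as ``the wrong direction,'' but it is exactly the right one and is the paper's proof. For each total computable $f$, the set of conditions $(\sigma, Y)$ whose reservoir has principal function dominating $f$ is dense (thin out the reservoir), so $G_\F$'s principal function eventually dominates every total computable function. By Martin's domination theorem, $G_\F$ is high, i.e., $G_\F' \geq_T \emptyset''$. Then $\emptyset''$ is computable in $G_\F'$, hence $\Pi^0_1(G_\F')$, hence $\Pi^0_2(G_\F)$ by the relativized Post theorem — precisely the claim. The fact you dismissed is the mechanism; it is not the ``wrong direction,'' it is the reduction $\emptyset'' \leq_T G_\F'$ that you said you wanted.
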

\begin{proof}
By Martin's domination theorem, a set is of high degree iff it computes a function eventually dominating every total computable function. Given a computable function~$f$ and a computable Mathias condition $(\sigma, X)$, there exists a computable Mathias extension $(\sigma, Y)$ such that the principal function of~$Y$ (the function which to $n$ associates the $n$th element of~$Y$) dominates~$f$. Thus, for every sufficiently generic filter~$\F$, the principal function of~$G_\F$ will eventually dominate every total computable function, hence be of high degree.
\end{proof}


The same argument holds for computable EM forcing.
Intuitively, the reason of failure of Mathias forcing and its variants, is because of the sparsity of its reservoirs. One way to circumvent the problem is to restrict the class possible reservoirs with a third-order object, which will play the role of a \qt{reservoir of reservoirs}: this meta-reservoir is a class of infinite sets~$\L$ which cannot contain arbitrarily sparse objects. Our goal is to work with EM conditions $(\sigma, X)$ such that $X \in \L$. One however requires the class~$\L$ to be closed under certain operations which are needed for using the combinatorics of~\Cref{sect:combinatorics-em}. Analyzing the operations made over the reservoirs, they are of three kinds:
\begin{enumerate}
    \item truncation of a reservoir from a finite number of elements (case 1 of the forcing question)
    \item splitting of a reservoir based on an instance of~$\RT^1_2$ (case 2 of the forcing question)
    \item choice of an $R$-transitive subtournament for an instance~$R$ of~$\EM$ (case 2 of the forcing question)
\end{enumerate}
Assuming~$\L$ contains only infinite sets, the truncation operation is a consequence of finitely many applications of~$\RT^1_2$. Unfortunately, contrary to the pigeonhole principle, the classes which are closed under applications of~$\EM$ do not have nice combinatorial properties. Therefore, in Case 2 of \Cref{lem:questionf}, instead of applying the Erd\H{o}s-Moser theorem restricted to~$X$ to obtain an infinite $R$-transitive subtournament~$Y \subseteq X$, we shall simply add~$R$ to the list of tournaments we commit to be transitive for. The benefit of it is that the only remaining operation done on our reservoirs is the application of~$\RT^1_2$. The counterpart of postponing our application of~$\EM$ is that our forcing conditions will now be made of triples $(\vec{R}, \sigma, X)$, where~$\vec{R}$ is a finite sequence of tournaments, and such that $(\sigma, X)$ is an EM condition for every~$R \in \vec{R}$. This list~$\vec{R}$ can grow with condition extension. 

The reservoir~$X$ will therefore belong to a class~$\L$ which is closed under applications of~$\RT^1_2$. This notion of closure is called \emph{partition regularity}. We shall introduce this concept and its main properties in \Cref{sect:largeness-pr}.
The restriction of the reservoirs to those which belong to a partition regular class dramatically decreases the definitional complexity of the forcing question, as instead of asking whether for every infinite set~$Y \subseteq X$, there is an infinite set~$Z \subseteq Y$ satisfying some property, one can ask whether the collection of all $Z$ satisfying the property is partition regular. Based on the complexity of the property, the question will not be definitionally too complex.

\section{Partition regularity and largeness}\label[section]{sect:largeness-pr}

The notion of partition regularity comes from combinatorics and is widely used in Ramsey theory. It therefore naturally occurred in the computability-theoretic analysis of combinatorial theorems. 

\begin{definition}
A class $\L \subseteq 2^{\omega}$ is \textit{partition regular} if :
\begin{itemize}
    \item $\L$ is non-empty,
    \item for all $X \in \L$, if $ X \subseteq Y$, then $Y \in \L$,
    \item for every integer $k$, for every $X \in \L$, for every $k$-cover $Y_1, Y_2, \dots Y_k$ of $X$, there exists $i \leq k$ such that $Y_i \in \L.$
\end{itemize}
\end{definition}

Dorais~\cite{dorais2012variant} was the first to use variants of Mathias forcing in which the reservoirs belong to partition regular classes, to produce generic sets of non-high degree. Since then, Monin and Patey~\cite{monin2019pigeons,monin2021srt22,monin2021weakness,monin2022partition} successfully used this variant to control iterated jump of solutions to the infinite pigeonhole principle. Monin and Patey~\cite[Section 2]{monin2021weakness} contains all the computability-theoretic analysis of partition regularity used in this article.
In this section, we therefore simply state the relevant definitions and theorems for the sake of completeness.

Partition regularity enjoys nice closure properties, but is not a notion of largeness per se, in that a superclass of a partition regular class is not necessarily partition regular itself. Throughout the article, given a property $\varphi(X)$, we will ask whether the class $\{ X : \varphi(X) \}$ is large, in the sense that it contains a partition regular subclass. This yields the following definition, which is often more convenient to manipulate than partition regularity.

\begin{definition}
A class $\L \subseteq 2^{\omega}$ is \textit{large} if :
\begin{itemize}
    \item for all $X \in \L$, if $ X \subseteq Y$, then $Y \in \L$,
    \item for every integer $k$, for every $k$-cover $Y_1, Y_2, \dots Y_k$ of $\omega$, there exists $i \leq k$ such that $Y  \in \L.$
\end{itemize}
\end{definition}

The large classes are exactly those which contain a partition regular subclass. To avoid degenerate behaviors, in this article, we shall restrict ourselves to large classes which contain only infinite sets. 

\begin{definition}
    A large class is \textit{non-trivial} if it contains only infinite sets.
\end{definition}

In particular, if a partition regular class~$\P$ is non-trivial, then it is closed under finite changes. Indeed, if~$X \in \P$ and $Y =^* X$ as witnessed by a finite set~$F$, then $Y, F$ form a 2-cover of~$X$, so either~$Y$ or $F$ must belong to~$\P$, and by non-triviality, $F \not \in \P$.

One of the core properties of large classes is the following lemma, which plays an essential role in the computability-theoretic analysis of large classes. For example, by contraposition, if an $F_\sigma$ class is not large, then it is included into a non-large open class.

\begin{lemma}[\cite{monin2021weakness}]\label[lemma]{lem:intersection-still-large}
Let $(\P_n)_{n \in \omega}$ be a decreasing sequence of large classes. Their intersection $\bigcap_{n \in \omega} \P_n$ is again large.
\end{lemma}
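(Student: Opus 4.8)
The plan is to verify the two defining clauses of largeness for $\P_\infty := \bigcap_{n \in \omega} \P_n$, using the $k$-cover characterization. Upward closure is immediate: if $X \in \P_\infty$ and $X \subseteq Y$, then $X \in \P_n$ for every $n$, hence $Y \in \P_n$ for every $n$ by upward closure of each $\P_n$, so $Y \in \P_\infty$. The real content is the $k$-cover clause. Fix $k$ and a $k$-cover $Y_1, \dots, Y_k$ of $\omega$; we must find some $i$ with $Y_i \in \P_\infty$, i.e. $Y_i \in \P_n$ for \emph{all} $n$ simultaneously. For each individual $n$, largeness of $\P_n$ gives \emph{some} index $i$ with $Y_i \in \P_n$, but a priori this index depends on $n$; the task is to extract a single index that works for cofinally many (hence, by the decreasing hypothesis, all) $n$.

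The key step is a pigeonhole-plus-monotonicity argument. Define $S_n = \{ i \le k : Y_i \in \P_n \}$. By largeness of $\P_n$, each $S_n$ is nonempty. Since the sequence $(\P_n)$ is decreasing ($\P_{n+1} \subseteq \P_n$), we have $S_{n+1} \subseteq S_n$: if $Y_i \in \P_{n+1}$ then $Y_i \in \P_n$. So $(S_n)_{n \in \omega}$ is a decreasing sequence of nonempty subsets of the finite set $\{1, \dots, k\}$. A decreasing sequence of nonempty subsets of a finite set is eventually constant, and its intersection $\bigcap_n S_n$ is nonempty. Pick $i \in \bigcap_n S_n$; then $Y_i \in \P_n$ for every $n$, so $Y_i \in \P_\infty$. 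This establishes the $k$-cover clause and completes the proof.

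I do not expect a genuine obstacle here; the only point requiring a moment's care is making sure the argument really needs the \emph{decreasing} hypothesis (it does — without it, the index realizing largeness at each level could oscillate and no single $Y_i$ need lie in the intersection, e.g. alternate between two large classes whose only common members, after removing one of the $Y_i$'s, differ) and that we are using the $k$-cover formulation of largeness rather than the partition-regularity formulation, so that we never need the intersection class to be nonempty as a separate hypothesis — nonemptiness of $\P_\infty$ is not part of the definition of \emph{large} (unlike \emph{partition regular}), and indeed it follows a posteriori since $\omega \in \P_n$ for all $n$ by upward closure from any member. One should also remark, as the paper does in the surrounding text, that this lemma is exactly what licenses the contrapositive statement about $F_\sigma$ classes: an intersection of countably many (decreasing) open large classes being large means that a non-large $F_\sigma$ class, written as $\bigcup_n C_n$ with $C_n$ closed, has its complement $\bigcap_n \overline{C_n}$ fail to be large only if some finite stage already fails.
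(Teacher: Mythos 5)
Your proof is correct and is the standard one (as in Monin--Patey): the index sets $S_n = \{\, i \le k : Y_i \in \P_n \,\}$ form a decreasing chain of nonempty subsets of the finite set $\{1,\dots,k\}$, so $\bigcap_n S_n \neq \emptyset$, and you correctly identify why the decreasing hypothesis is needed. Your closing aside is a bit garbled---the classes $\U^\M_C$ are $G_\delta$ (countable intersections of open classes), not $F_\sigma$, and the intended contrapositive is simply that a non-large countable decreasing intersection of open large classes must already have some finite subintersection fail to be large---but this does not affect the body of the argument, which is fine.
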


The above lemma also holds if one replaces largeness by partition regularity. Moreover, a union of partition regular classes is still partition regular. Therefore, every large class contains a largest (for inclusion) partition regular subclass, which justifies the following definition.

\begin{definition}
    For every large class $\P$, let $\L(\P)$ denote the largest partition regular subclass of $\P$.
\end{definition}

By the infinite pigeonhole principle, the class of all infinite sets is partition regular. This naturally generalizes to a whole family of partition regular classes:

\begin{definition}
    For every set $X \subseteq \omega$, let $\L_X := \{ E \subseteq \omega : |E \cap X|=\infty \}$.
\end{definition}

For every infinite set~$X$, the class~$\L_X$ is partition regular. This class plays an essential role in ensuring that a set belongs to all partition regular subclasses. Indeed, if $\P$ is a partition regular subclass of~$\L_X$, then $X \in \P$, as otherwise, since $X, \overline{X}$ forms a 2-cover of~$\omega$, we would have~$\overline{X} \in \P \subseteq \L_X$, contradiction.

\subsection{$\Pi^0_2$ large classes}

By Monin and Patey~\cite[Proposition 2.15]{monin2022partition}, there are no non-trivial $\mathbf{\Sigma^0_2}$ large classes. The first interesting example of such classes are~$\Pi^0_2$.
Along this article, we will only be interested in $F_\sigma$ classes, and more precisely intersections of~$\Sigma^0_1(\M)$ classes, for some Scott set~$\M$ (recall that a formula is $\Sigma^0_1(\M)$ if it is $\Sigma^0_1(P)$ for some~$P \in \M$). Fix a Scott set $\M$ encoded by a set $M \subseteq \omega$, i.e., $ M = \bigoplus_{n \in \omega} X_n$ and $\M = \{ X_n : n \in \omega \}$. One can code such classes by sets $C \subseteq \omega^2$ as follows:

\begin{definition}
Fix a set $P$. For every~$e \in \omega$, let $\U^P_e = \{ Z \in 2^\omega : \exists \sigma \in W^P_e : \sigma \subseteq Z \}$.
For every $C \subseteq \omega^2$, let $\U^{\M}_C = \bigcap_{(e,i) \in C} \U^{X_i}_e$.
\end{definition}

The following lemma is a core lemma in the analysis of the definitional complexity of the statement \qt{$\U^{\M}_C$ is large}, thanks to \Cref{lem:intersection-still-large}.

\begin{lemma}[\cite{monin2021weakness}]\label[lemma]{largenesssentencecomp}
Let $\mathcal{A}$ be a $\Sigma_1^0$ class. The sentence \qt{$ \mathcal{A}$ is large} is $\Pi_2^0$.
\end{lemma}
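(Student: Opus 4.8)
The plan is to unwind the definition of ``$\mathcal{A}$ is large'' and show that each quantifier block contributes only a bounded amount of definitional complexity, collapsing to $\Pi^0_2$. Recall that $\mathcal{A}$ large means: (i) $\mathcal{A}$ is closed upward under $\subseteq$, and (ii) for every $k$ and every $k$-cover $Y_0,\dots,Y_{k-1}$ of $\omega$, some $Y_i \in \mathcal{A}$. Since $\mathcal{A}$ is a fixed $\Sigma^0_1$ class, given by $\mathcal{A} = \U^P_e$ for some index $e$ and parameter $P$, upward closure is automatic from the form of $\U^P_e$ (or in any case is a $\Pi^0_1$ condition), so the whole content lies in clause (ii).

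The first key step is to handle the universal quantifier over $k$ and over all $k$-covers. A $k$-cover $Y_0,\dots,Y_{k-1}$ of $\omega$ is really just a function $c : \omega \to k$ (send $n$ to some $i$ with $n \in Y_i$), and ``$Y_i \in \mathcal{A}$'' for a $\Sigma^0_1$ class only ever depends on which finite subsets of $Y_i$ appear; so the statement becomes: for every $k$ and every coloring $c : \omega \to k$, there is $i < k$ and a finite string $\sigma \in W^P_e$ with $\sigma \subseteq Y_i$. The second key step — and the crux — is a compactness/König's lemma argument: one shows that largeness of a $\Sigma^0_1$ class $\mathcal{A}$ is equivalent to a statement of the form ``for every $k$, there is a finite threshold $t_k$ such that every coloring of $\{0,\dots,t_k\}$ into $k$ colors already contains a monochromatic-in-one-color finite set that enters $\mathcal{A}$ via $W^P_e$.'' Concretely, if $\mathcal{A}$ is large then by compactness (the tree of finite colorings with no witnessing $\sigma$ is finitely branching, and if infinite has a path, giving a $k$-cover avoiding $\mathcal{A}$ — contradiction) a finite threshold exists; conversely a finite threshold for each $k$ clearly forces some $Y_i \in \mathcal{A}$ for every $k$-cover. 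The inner statement ``for all colorings of $\{0,\dots,t\}$ into $k$ colors, some color class contains a finite set in $W^P_e$'' is, for fixed $k$ and $t$, computable in $P$ (bounded quantifier over finitely many colorings, $\Sigma^0_1(P)$ search for the string $\sigma$, which terminates because we only need it below $t$). So largeness reads: $(\forall k)(\exists t)\, \theta(k,t)$ with $\theta$ being $\Sigma^0_1(P)$; this is $\Pi^0_2$.

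I expect the main obstacle to be getting the compactness direction exactly right: one must be careful that the ``bad tree'' whose paths are $k$-covers avoiding $\mathcal{A}$ is genuinely finitely branching and that a path through it really does yield a $k$-cover of all of $\omega$ with no member in $\mathcal{A}$ — using here that membership in the $\Sigma^0_1$ class $\mathcal{A} = \U^P_e$ is witnessed by a finite substring, so that if $Y_i$ avoids $\mathcal{A}$ then no initial segment of $Y_i$ contains a string of $W^P_e$ as a subset. Equivalently, one phrases it via: $\mathcal{A}$ fails to be large iff there is $k$ and a $k$-cover witnessing failure iff (by König) there is $k$ and for every finite threshold $t$ a finite $k$-coloring of $\{0,\dots,t\}$ with no color class hitting $W^P_e$, i.e.\ $(\exists k)(\forall t)\,\neg\theta(k,t)$, which is $\Sigma^0_2$; negating gives the $\Pi^0_2$ bound. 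A minor additional point to check is uniformity: $\theta(k,t)$ should be uniformly $\Sigma^0_1(P)$ in $k,t$, which is immediate since everything inside is an effective finite search relative to $P$. Once these pieces are assembled the conclusion is immediate.
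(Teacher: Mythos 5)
Your proof is correct and is the natural compactness/K\"onig argument; the paper cites Monin and Patey for this lemma without reproducing a proof, but yours matches the intended route (reduce $k$-covers to partitions using upward closure of $\U^P_e$, observe that the bad finitely-branching tree for fixed $k$ is co-c.e.\ in $P$, and conclude that largeness is $(\forall k)(\exists t)\,\theta(k,t)$). One small slip: $\theta(k,t)$ is \emph{not} decidable in $P$ --- restricting $F_\sigma \subseteq \{0,\dots,t\}$ does not bound the time needed to enumerate $\sigma$ into $W^P_e$ --- but, as you correctly state at the end, it is uniformly $\Sigma^0_1(P)$, and $(\forall k)(\exists t)\,\Sigma^0_1(P)$ is still $\Pi^0_2(P)$, so the conclusion is unaffected.
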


By \Cref{lem:intersection-still-large}, a class $\U^{\M}_C$ is large iff $\U^{\M}_F$ is large for every finite set~$F \subseteq C$. The class $\U^{\M}_F$ is $\Sigma^0_1(\M)$ uniformly in~$F$, hence by a relativization of \Cref{largenesssentencecomp}, the statement \qt{$\U^{\M}_F$ is large} is $\Pi^0_2(\M)$ uniformly in~$F$. The overall statement \qt{$\U^{\M}_C$ is large} is therefore $\Pi^0_1(C \oplus M')$, where $M$ is the set coding $\M$.

The following lemma shows that instead of working with large classes of the form $\U^\M_C$,
one can work with partition regular classes without extra definitional complexity.

\begin{lemma}[\cite{monin2021weakness}]\label[lemma]{lem:compute-luc}
    For every set $C \subseteq \omega^2$, there exists $D \leq_T C$ such that $\U^\M_D = \L(\U^\M_C)$.
\end{lemma}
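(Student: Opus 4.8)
\textbf{Proof proposal for \Cref{lem:compute-luc}.}

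The plan is to describe $\L(\U^\M_C)$ as another class of the form $\U^\M_D$ for a suitable $D \leq_T C$, by closing $C$ off under a $C$-computable enumeration of all the ``largeness failures'' that must be removed. First I would recall the characterization coming from \Cref{lem:intersection-still-large} and the preceding discussion: a class $\U^\M_F$, for $F \subseteq \omega^2$ finite, is large iff all its finite sub-approximations are, and ``$\U^\M_F$ is large'' is $\Pi^0_2(\M)$ uniformly in $F$; hence largeness of classes of this form is a $C\oplus M'$-computable (in fact $\Pi^0_1(C \oplus M')$) predicate of the finite index set. The key combinatorial input is the following monotonicity: if $\U^\M_F$ is \emph{not} large, then for \emph{every} $C' \supseteq F$ the class $\U^\M_{C'} \subseteq \U^\M_F$ is not large either, and moreover $\U^\M_{C'}$ contains a partition regular subclass iff it is large (this is exactly ``large classes are those containing a partition regular subclass'', stated in the excerpt). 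So the largest partition regular subclass of $\U^\M_C$ is obtained by \emph{throwing in} to $C$ every pair $(e,i)$ such that $\U^{X_i}_e$ still contains the relevant partition-regular core — equivalently, by building the $\subseteq$-maximal index set $D \supseteq C$ with $\U^\M_D$ large.

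The construction I would carry out: enumerate all pairs $(e,i) \in \omega^2$ as $p_0, p_1, \dots$, and build $D = \bigcup_s D_s$ with $D_0 = C$, at stage $s$ setting $D_{s+1} = D_s \cup \{p_s\}$ if $\U^\M_{D_s \cup \{p_s\}}$ is large, and $D_{s+1} = D_s$ otherwise. Each test ``$\U^\M_{D_s \cup \{p_s\}}$ is large'' must be decided $C$-computably; this is where I expect the main obstacle to lie, since a priori largeness is only $\Pi^0_1(C \oplus M')$, not $C$-computable. The resolution should be the standard device used throughout this line of work (Monin--Patey): since $\M$ is a Scott set, it is closed under the relevant jump-like operations needed to witness failures of largeness, and the compactness argument behind \Cref{largenesssentencecomp} turns ``$\U^\M_F$ is not large'' into a $\Sigma^0_1$ event — there is a finite stage and a finite cover witnessing non-largeness — which can be searched for effectively; one either finds such a witness (inside $\M$, hence $C$-available via $M$) or one doesn't. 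Concretely I would argue that the predicate can be made decidable by dovetailing the search for a non-largeness witness against the ongoing construction, so that the ``large'' branch is taken by default and revised finitely often; alternatively, absorb $M'$ by noting that the whole construction only needs to query largeness of classes built from $C$ together with a fixed Scott set, and appeal to the relativized form of \Cref{largenesssentencecomp} together with the fact (used implicitly earlier) that for $\Sigma^0_1(\M)$ classes one may work with $\M$-indices directly. I would pin down the exact computation in the write-up so that $D \leq_T C$ genuinely holds, as claimed.

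It then remains to verify $\U^\M_D = \L(\U^\M_C)$. For $\supseteq$: $\U^\M_D \subseteq \U^\M_C$ since $D \supseteq C$, and $\U^\M_D$ is large (being a decreasing intersection of the large classes $\U^\M_{D_s}$, using \Cref{lem:intersection-still-large}), so $\U^\M_D$ contains a partition regular subclass; but I would go one step further and show $\U^\M_D$ is \emph{itself} partition regular, not merely large, by exhibiting it as $\L(\U^\M_D)$ — which follows because maximality of $D$ means no further pair can be added, so the canonical partition-regular core of $\U^\M_D$ cannot be a proper subclass (any member of the core lies in $\U^{X_i}_e$ for a pair $(e,i)$ that the construction would have added). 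Hence $\U^\M_D$ is a partition regular subclass of $\U^\M_C$, giving $\U^\M_D \subseteq \L(\U^\M_C)$. For the reverse inclusion: if $Z \in \L(\U^\M_C)$, then $Z$ lies in the largest partition regular subclass $\P$ of $\U^\M_C$; I would show that for any pair $(e,i) \in D$, the class $\U^{X_i}_e$ contains $\P$ — because at the stage $(e,i)$ was added, $\U^\M_{D_s \cup \{(e,i)\}}$ was large, and by maximality/monotonicity the partition regular core of $\U^\M_C$ must already sit inside every $\U^{X_i}_e$ that survives to $D$ (otherwise, using partition regularity of $\P$ and the 2-cover $\{\,Z : \sigma \subseteq Z$ for some $\sigma \in W^{X_i}_e\,\}$ versus its complement, one would contradict either $\P \subseteq \U^\M_C$ or maximality of $D$). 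Therefore $Z \in \U^{X_i}_e$ for all $(e,i) \in D$, i.e.\ $Z \in \U^\M_D$. Combining the two inclusions yields $\U^\M_D = \L(\U^\M_C)$ with $D \leq_T C$, as desired.
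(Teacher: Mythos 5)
Your greedy construction has two serious problems, and the second one is fatal.

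\textbf{The computability claim is not substantiated.} You correctly observe that the test \qt{$\U^\M_{D_s \cup \{p_s\}}$ is large} is a priori $\Pi^0_1(C \oplus M')$, but the claimed resolution does not work. \Cref{largenesssentencecomp} makes \qt{$\mathcal{A}$ is large} a $\Pi^0_2$ statement, so its negation is $\Sigma^0_2$, not $\Sigma^0_1$: a failure of largeness is witnessed by an infinite $k$-cover, and compactness only converts this into \qt{for every $n$ there is a $k$-partition of $\{0,\dots,n\}$ avoiding the generating strings}, which is again $\Pi^0_1$. There is no $\Sigma^0_1$ event to search for, and dovetailing a priority-style revision procedure would at best give $D \leq_T C'$, not $D \leq_T C$. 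This gap is not just cosmetic; the greedy approach intrinsically requires an oracle for largeness.

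\textbf{The greedy construction produces the wrong class.} Even granting a largeness oracle, the $D$ you build yields an $\M$-minimal large subclass $\U^\M_D$ of $\U^\M_C$, not the \emph{largest} partition regular subclass $\L(\U^\M_C)$, and these differ in general. The paper itself warns of this: \qt{given a large class $\U^\M_C$, the index set $C$ can be completed into an index set $D \supseteq C$ in multiple ways to form an $\M$-minimal large subclass $\U^\M_D$, depending on the order in which the questions are asked.} A concrete counterexample: take $\U^\M_C$ to be the class of infinite sets, and let $A \in \M$ be bi-infinite. Both $\U^\M_{C} \cap \L_A$ and $\U^\M_C \cap \L_{\overline{A}}$ are large, but their intersection is not. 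Your greedy procedure will add an index for $\L_A$ to $D$ (if it is enumerated first), after which it can never add an index for $\L_{\overline{A}}$; then $\overline{A} \notin \U^\M_D$, whereas $\overline{A}$ certainly belongs to $\L(\U^\M_C)$, which is the class of all infinite sets. Your argument for the $\supseteq$ inclusion (\qt{by maximality/monotonicity the partition regular core must sit inside every $\U^{X_i}_e$ that survives to $D$}) is precisely where this fails: adding $(e,i)$ because $\U^\M_{D_s \cup \{(e,i)\}}$ is large commits the construction to one particular minimal completion and cuts away other partition regular subclasses that are part of the union $\L(\U^\M_C)$.

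The intended proof goes in a different direction and avoids largeness queries entirely. One uses the characterization $Z \in \L(\U^\M_C) \iff \L_Z \cap \U^\M_C$ is large, together with the decomposition $\L(\U^\M_C) = \bigcap_{F \subseteq C \text{ finite}} \L(\U^\M_F)$ (the right-hand side is a directed intersection of partition regular classes, hence partition regular, and sits inside $\U^\M_C$, hence inside $\L(\U^\M_C)$; the other inclusion is trivial). For a single $\Sigma^0_1(\M)$ class $\U^\M_F$, the compactness argument behind \Cref{largenesssentencecomp}, applied to $\L_Z \cap \U^\M_F$, turns membership $Z \in \L(\U^\M_F)$ into an explicit $\Pi^0_2(\M)$ condition in $Z$, namely a countable conjunction (over $k,m$) of $\Sigma^0_1(\M)$ open conditions which are monotone under $\supseteq$; each conjunct is therefore of the form $\U^{X_j}_{e}$ with $e$ and $j$ computable from $F$, $k$, $m$. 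Setting $D$ to be the set of all these indices as $F$ ranges over finite subsets of $C$ and $(k,m)$ over $\omega^2$ yields $\U^\M_D = \L(\U^\M_C)$, and $D \leq_T C$ because enumerating finite subsets of $C$ and computing the resulting indices is a $C$-effective procedure — no largeness oracle is invoked anywhere.
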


\subsection{$\M$-minimal and $\M$-cohesive classes}\label[section]{sect:minimal-cohesive-classes}

When two classes $\P$ and $\Q$ are large, their intersection $\P \cap \Q$ is not necessarily large. For example, letting~$X$ be a bi-infinite set, both the classes $\L_X$ and $\L_{\overline{X}}$ are large, but their intersection is not, as witnessed by the 2-cover $X, \overline{X}$ of~$\omega$.
During the forcing construction, one will consider multiple properties to be forced, and therefore will need to ensure that not only the corresponding classes are large, but so are their intersection. One natural approach consists in creating a large class which will be minimal for inclusion, in the following sense:

\begin{definition}
      A class $\A$ is \emph{$\M$-minimal} if for every $X \in \M$ and $e \in \omega$, either $\A \subseteq \U_e^X$ or $\A \cap \U_e^X$ is not large.
\end{definition}

Then, in order to decide whether two $\Sigma^0_1(\M)$ properties $\P$ and $\Q$ are large, one can ask independently whether $\A \cap \P$ and $\A \cap \Q$ are large. If both are, then by $\M$-minimality of~$\A$, $\A \subseteq \P$ and $\A \subseteq \Q$, hence $\P \cap \Q$ is large as well.
Eventhough the notion of minimality was defined for largeness, partition regularity comes for free for an $\M$-minimal large class.

\begin{lemma}[\cite{monin2021weakness}]\label[lemma]{lem:minimal-partition-regular}
    Every $\M$-minimal large class $\U_C^\M$ is partition regular.
\end{lemma}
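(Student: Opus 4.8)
The plan is to show that $\A := \U_C^\M$ actually coincides with its largest partition regular subclass $\L(\A)$; since $\L(\A)$ is partition regular by construction, this gives the result. So everything reduces to proving the inclusion $\A \subseteq \L(\A)$, the reverse inclusion being trivial.

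First I record the easy preliminaries. A large class contains $\omega$ (apply largeness to the $1$-cover $\{\omega\}$), hence is nonempty; and a partition regular class $\P$ is large, since for any $k$-cover $Y_1,\dots,Y_k$ of $\omega$, picking $X \in \P$, the sets $Y_1 \cap X,\dots,Y_k\cap X$ form a $k$-cover of $X$, so some $Y_j\cap X \in \P$ and thus $Y_j \in \P$ by upward closure. In particular, since $\A$ is large it contains a partition regular subclass, so $\L(\A)$ is a nonempty partition regular class, hence large, and $\L(\A) \subseteq \A$. Crucially, by \Cref{lem:compute-luc} we may write $\L(\A) = \L(\U_C^\M) = \U_D^\M = \bigcap_{(e,i)\in D}\U_e^{X_i}$ for some $D \subseteq \omega^2$, where $\M = \{X_n : n\in\omega\}$.

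Now the core step: fix $(e,i) \in D$. Then $\L(\A) \subseteq \U_e^{X_i}$, and combined with $\L(\A)\subseteq\A$ this gives $\L(\A) \subseteq \A \cap \U_e^{X_i}$. Thus $\A \cap \U_e^{X_i}$ contains the partition regular class $\L(\A)$, so $\A \cap \U_e^{X_i}$ is large. Applying $\M$-minimality of $\A$ to $X_i \in \M$ and the index $e$: the alternative "$\A \cap \U_e^{X_i}$ is not large" is excluded, so we must have $\A \subseteq \U_e^{X_i}$. Since $(e,i)\in D$ was arbitrary, $\A \subseteq \bigcap_{(e,i)\in D}\U_e^{X_i} = \L(\A)$, hence $\A = \L(\A)$ is partition regular.

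There is no serious obstacle here; the one point requiring care is to invoke \Cref{lem:compute-luc} to present $\L(\A)$ in the canonical form $\U_D^\M$ before using $\M$-minimality — otherwise $\L(\A)$ could be an arbitrary partition regular subclass whose generators are not of the shape $\U_e^{X_i}$ with $X_i \in \M$, and the hypothesis of $\M$-minimality would not apply. One should also be mindful to use $\M$-minimality in the right direction, ruling out the "not large" branch rather than assuming the inclusion branch.
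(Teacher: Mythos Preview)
The paper does not actually prove this lemma; it is quoted from Monin--Patey~\cite{monin2021weakness} and stated without proof, so there is no in-paper argument to compare against. Your proof is correct: the key idea of writing $\L(\U_C^\M)$ as $\U_D^\M$ via \Cref{lem:compute-luc} and then using $\M$-minimality on each generator $\U_e^{X_i}$ with $(e,i)\in D$ is exactly the right move, and your remark about why this representation is needed (so that the generators lie in the scope of the $\M$-minimality hypothesis) is on point.

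One small stylistic comment: the preliminary paragraph verifying that a partition regular class is large and that $\omega$ lies in any large class is fine but could be shortened to a single sentence, since these facts are immediate from the definitions and are used freely throughout the paper.
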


There exists $\M$-minimal large classes of the form $\U^\M_C$. However, the index set~$C$ is computationally too complex, as it is only~$M''$-computable. Indeed, in order to create the set~$C$ by finite approximations $C_0 \subseteq C_1 \subseteq \dots$, one needs to successively ask whether $\U^\M_{C_s} \cap \U_e^X$ is large, which is a $\Pi^0_2(\M)$ question. Thankfully, one can consider a weaker notion with better computational properties, which still satisfies the compatibility requirements.

\begin{definition}
    A class $\A$ is \emph{$\M$-cohesive} if for every $X \in \M$, either $\A \subseteq \L_X$ or $\A \subseteq \L_{\overline{X}}$.
\end{definition}

Every $\M$-minimal class is $\M$-cohesive. Moreover, one can compute the index set~$C$ of an $\M$-cohesive class~$\U^\M_C$ in any PA degree over~$M'$. Indeed, instead of deciding whether $\U^\M_{C_s} \cap \L_X$ is large or not, one needs to pick a true statement among \qt{$\U^\M_{C_s} \cap \L_X$ is large} and \qt{$\U^\M_{C_s} \cap \L_{\overline{X}}$ is large}. Choosing a true $\Pi^0_2(M)$ sentence among two such sentences, known that one of them is true, can be computed by any PA degree over~$M'$ (see Cholak, Jockusch and Slaman~\cite[Lemma 4.2]{cholak_jockusch_slaman_2001} for a proof).

\begin{lemma}[\cite{monin2021weakness}]\label[lemma]{lem:cohesive-compatibility}
    Let $\U_C^{\M}$ be an $\M$-cohesive class. Let $\U_D^{\M}$ and  $\U_E^\M$ be such that $\U_C^\M \cap \U_D^\M$ and  $\U_C^\M \cap \U_E^\M$ are both large. Then so is $\U_C^\M \cap \U_D^\M \cap  \U_E^\M$.
\end{lemma}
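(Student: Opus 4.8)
The plan is to reduce the three-way intersection to the $\M$-minimal case via \Cref{lem:intersection-still-large} and the closure properties of largeness recalled in \Cref{sect:largeness-pr}. First I would observe that, since $\U_C^\M$ is $\M$-cohesive, for every $X \in \M$ either $\U_C^\M \subseteq \L_X$ or $\U_C^\M \subseteq \L_{\overline{X}}$; in either case $\U_C^\M$ is contained in one of the two partition regular classes $\L_X, \L_{\overline{X}}$, and in particular $\U_C^\M$ is large (take $X \in \M$ arbitrary, $\U_C^\M \subseteq \L_X$ which is partition regular, hence large, and largeness is inherited downward along $\subseteq$-containment of the whole class... more precisely: a subclass need not be large, but here $\U_C^\M$ is assumed of the form $\U^\M_C$ with $\U_C^\M \cap \U_D^\M$ large, so $\U_C^\M$ itself is large since $\U_D^\M$ is upward closed and a $k$-cover argument on $\omega$ lifts). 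The essential point to extract is: to show $\U_C^\M \cap \U_D^\M \cap \U_E^\M$ is large, by \Cref{lem:intersection-still-large} it suffices to handle finite subsets, so I may assume $D$ and $E$ are finite, i.e. $\U_D^\M$ and $\U_E^\M$ are $\Sigma^0_1(\M)$ classes.

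Next I would argue that the intersection of the partition regular class $\L(\U^\M_C)$ — which by \Cref{lem:compute-luc} equals $\U^\M_{D'}$ for some $D' \leq_T C$, and which is nonempty and $\M$-cohesive (cohesiveness is inherited by any nonempty subclass, in particular by $\L(\U_C^\M)$) — with $\U_D^\M$ is still large. This is where the $\M$-cohesiveness is used in its full strength: the key sub-claim is that an $\M$-cohesive partition regular class $\P$ together with the fact that $\P \cap \U_D^\M$ is large forces $\P \subseteq \L(\U_D^\M)$, or at least that $\P \cap \U_D^\M$ is itself partition regular. The mechanism is the one sketched before \Cref{lem:minimal-partition-regular}: for an $\M$-cohesive class, largeness of $\P \cap \U^{X_i}_e$ should already imply $\P \subseteq \U^{X_i}_e$, because a $k$-cover of an element of $\P$ by $\Sigma^0_1(X_i)$ pieces can be analyzed through the cohesive decomposition of $\P$ relative to each $X_i$ occurring in the finite index set of $\U_D^\M$. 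Granting this, $\P \subseteq \U_D^\M$ and $\P \subseteq \U_E^\M$, hence $\P \subseteq \U_D^\M \cap \U_E^\M$, and since $\P$ is partition regular and contained in $\U_C^\M$, the triple intersection $\U_C^\M \cap \U_D^\M \cap \U_E^\M \supseteq \P$ contains a partition regular class, so it is large.

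So the skeleton is: (i) reduce to $D,E$ finite by \Cref{lem:intersection-still-large}; (ii) replace $\U_C^\M$ by its largest partition regular subclass $\P = \L(\U_C^\M)$, which is $\M$-cohesive and still satisfies that $\P \cap \U_D^\M$ and $\P \cap \U_E^\M$ are large (here one needs that largeness of $\U_C^\M \cap \U_D^\M$ descends to $\P \cap \U_D^\M$ — this uses that every large class of the form $\U^\M_{(\cdot)}$ has $\L(\U^\M_{(\cdot)})$ large and that intersecting with the upward-closed $\U_D^\M$ commutes appropriately with taking $\L(\cdot)$); (iii) prove the sub-claim that for an $\M$-cohesive partition regular $\P$ and a $\Sigma^0_1(\M)$ class $\U$, largeness of $\P \cap \U$ implies $\P \subseteq \U$; (iv) conclude $\P \subseteq \U_D^\M \cap \U_E^\M$ and hence the triple intersection is large. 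I expect step (iii) to be the main obstacle: it is exactly the "compatibility" content of cohesiveness, and getting the $k$-cover bookkeeping right — decomposing a given $X \in \P$ according to which side of each relevant $X_i$ the class $\P$ lies on, and matching this against the finitely many $\Sigma^0_1(X_i)$ conditions defining $\U_D^\M$ — is the delicate part; steps (i), (ii), (iv) are essentially bookkeeping on top of the already-cited lemmas.
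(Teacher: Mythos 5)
Your reduction to finite $D,E$ via \Cref{lem:intersection-still-large}, and the idea of passing to a partition regular subclass, are reasonable, but the proposal has a genuine gap: everything hinges on step (iii), which you state but do not prove, and it is not bookkeeping. The assertion that an $\M$-cohesive partition regular class $\P$ with $\P\cap\U$ large satisfies $\P\subseteq\U$ is precisely the claim that $\P$ is $\M$-minimal --- that is, the content of the unnumbered lemma \emph{following} \Cref{lem:cohesive-compatibility}, which the surrounding text derives \emph{from} \Cref{lem:cohesive-compatibility}. So without a standalone proof of (iii) your route is circular relative to the paper's logical order. You also flag but do not justify step (ii), that $\L(\U_C^\M)\cap\U_D^\M$ inherits largeness from $\U_C^\M\cap\U_D^\M$; this does not follow from upward closure, since $\L(\U_C^\M)$ is a \emph{sub}class of $\U_C^\M$. (It is cleanest to sidestep (ii) by stating (iii) with hypothesis ``$\U_C^\M\cap\U$ large'' rather than ``$\P\cap\U$ large,'' but then (iii) still needs a proof.)

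A direct argument avoids these issues and shows where cohesiveness actually enters. Suppose $\U_C^\M\cap\U_D^\M\cap\U_E^\M$ is not large. By \Cref{lem:intersection-still-large} some finite sub-intersection $\U_F^\M$, $F\subseteq C\cup D\cup E$ finite, is not large. This is a $\Sigma^0_1(Z)$ class for some $Z\in\M$, so for the witnessing $k$ the class of $k$-covers of $\omega$ all of whose pieces avoid $\U_F^\M$ is a nonempty $\Pi^0_1(Z)$ class; since $\M$ is a Scott set it has a member in $\M$, and refining it to a partition gives $W_0,\dots,W_{\ell-1}\in\M$ partitioning $\omega$ with no $W_j$ in $\U_C^\M\cap\U_D^\M\cap\U_E^\M$. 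Now $\U_C^\M$ is large (it is upward closed and contains the large class $\U_C^\M\cap\U_D^\M$), so some $W_{j_0}\in\U_C^\M$. For $j\neq j_0$ we have $W_{j_0}\cap W_j=\emptyset$, so $W_{j_0}\notin\L_{W_j}$, hence $\U_C^\M\not\subseteq\L_{W_j}$, hence by $\M$-cohesiveness $\U_C^\M\subseteq\L_{\overline{W_j}}$, and therefore $W_j\notin\U_C^\M$. Thus $W_{j_0}$ is the \emph{unique} piece of the partition in $\U_C^\M$. But $\U_C^\M\cap\U_D^\M$ is large, so some piece lies in it, and that piece must be $W_{j_0}$; likewise for $\U_C^\M\cap\U_E^\M$. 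Hence $W_{j_0}\in\U_C^\M\cap\U_D^\M\cap\U_E^\M$, contradiction. Cohesiveness forces each $\M$-partition of $\omega$ to have a single distinguished piece in $\U_C^\M$, and both largeness hypotheses are then compelled to point to that same piece.
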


In general, given a large class~$\U^\M_C$, the index set~$C$ can be completed into an index set~$D \supseteq C$ in multiple ways to form an $\M$-minimal large subclass~$\U^\M_D$, depending on the order in which the questions are asked. However, whenever $\U_C^{\M}$ is $\M$-cohesive, then by \Cref{lem:cohesive-compatibility}, the order of the questions does not matter, thus it contains a unique $\M$-minimal subclass, which can be explicitly defined as follows: 

\begin{lemma}
    Given an \MCO large class $\UCM$, the collection of sets 
    $$ \langle \UCM \rangle := \bigcap_{e \in \omega, X \in \M} \{ \U_e^X : \UCM \cap \U_e^X \text{ is a large } \} $$
    is an $\M$-minimal large class contained in $\UCM$.
\end{lemma}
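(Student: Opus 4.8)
The plan is to show that $\langle \UCM \rangle$ is (i) large, (ii) contained in $\UCM$, and (iii) $\M$-minimal. Containment is immediate once we know $\UCM$ itself appears among the classes being intersected, which holds because $\UCM \cap \U_e^X$ is large whenever $\UCM \subseteq \U_e^X$, and in particular one may always throw in, say, $e$ with $\U_e^X = 2^\omega$; more honestly, every $\U_e^X$ that occurs in the intersection contains $\UCM \cap \U_e^X$, which we will show equals $\UCM$ below, so $\langle\UCM\rangle \supseteq \UCM$ is not what we want — rather we want $\langle\UCM\rangle \subseteq \UCM$, which follows since $\UCM$ is itself a set of the form $\bigcap_{(e,i)\in C}\U_e^{X_i}$ and each such $\U_e^{X_i}$ satisfies $\UCM \cap \U_e^{X_i} = \UCM$, hence is large, hence is one of the classes intersected in the definition of $\langle\UCM\rangle$. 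So $\langle\UCM\rangle \subseteq \bigcap_{(e,i)\in C}\U_e^{X_i} = \UCM$.

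For largeness (i): the collection $\{\U_e^X : \UCM \cap \U_e^X \text{ is large}\}$, ordered by reverse inclusion of finite subintersections, is a family of $\Sigma^0_1(\M)$ classes, each of which has large intersection with the fixed class $\UCM$. I would first reduce to finite subfamilies: by \Cref{lem:intersection-still-large} (applied to largeness), $\langle\UCM\rangle = \bigcap_{e,X} (\ldots)$ is large iff every finite subintersection $\UCM \cap \U_{e_1}^{X_1} \cap \dots \cap \U_{e_k}^{X_k}$ is large, where each $\UCM \cap \U_{e_j}^{X_j}$ is individually large. Now here is where $\M$-cohesiveness of $\UCM$ does the work: \Cref{lem:cohesive-compatibility} gives exactly the two-step compatibility $\UCM \cap \U_D^\M \cap \U_E^\M$ large from $\UCM \cap \U_D^\M$ and $\UCM \cap \U_E^\M$ large; iterating this $k-1$ times (absorbing each $\U_{e_j}^{X_j}$ one at a time into the accumulated index set) yields that every finite subintersection is large. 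Note that at each stage the accumulated class $\UCM \cap \U_{e_1}^{X_1}\cap\dots\cap\U_{e_{j}}^{X_{j}}$ is still of the form $\U_{C'}^\M$, but we do not need it to remain $\M$-cohesive — \Cref{lem:cohesive-compatibility} only asks the \emph{first} class to be $\M$-cohesive, and that role is always played by the fixed $\UCM$. Hence $\langle\UCM\rangle$ is large, and being a class of the form $\U_D^\M$, it is closed upward automatically.

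For $\M$-minimality (iii): fix $X \in \M$ and $e \in \omega$. Either $\langle\UCM\rangle \cap \U_e^X$ is not large, in which case the minimality condition is met, or it is large. In the latter case, since $\langle\UCM\rangle \cap \U_e^X \subseteq \UCM \cap \U_e^X$, the class $\UCM \cap \U_e^X$ is large, so by definition $\U_e^X$ is one of the classes appearing in the intersection defining $\langle\UCM\rangle$; therefore $\langle\UCM\rangle \subseteq \U_e^X$. This is exactly the dichotomy required for $\M$-minimality. Combined with \Cref{lem:minimal-partition-regular}, this also shows $\langle\UCM\rangle$ is partition regular, though the statement as phrased only claims largeness and minimality.

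\textbf{Main obstacle.} The only genuinely delicate point is the reduction from "all finite subintersections large" to the full infinite intersection being large: one must be careful that $\langle\UCM\rangle$ really is a \emph{decreasing} intersection of large classes so that \Cref{lem:intersection-still-large} applies — i.e.\ one enumerates the (countably many) pairs $(e,X)$ with $\UCM\cap\U_e^X$ large and takes partial intersections $\UCM \cap \U_{e_1}^{X_1} \cap \dots \cap \U_{e_k}^{X_k}$, each of which is large by the iterated application of \Cref{lem:cohesive-compatibility} sketched above, and whose limit is $\langle\UCM\rangle$. Everything else is bookkeeping. Note also one should double-check the degenerate case where no nontrivial $\U_e^X$ has large intersection with $\UCM$, in which case $\langle\UCM\rangle = \UCM$ trivially and all three properties are immediate.
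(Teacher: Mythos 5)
Your proof is correct. The paper states this lemma without an accompanying proof, so there is no official argument to compare against, but your decomposition into the three checks is the natural one and each step is sound: containment follows because every $\U_e^{X_i}$ with $(e,i) \in C$ has $\UCM \cap \U_e^{X_i} = \UCM$ large and hence appears in the defining intersection; largeness follows by iterating \Cref{lem:cohesive-compatibility} (noting, as you correctly observe, that only the \emph{fixed} class $\UCM$ needs to be $\M$-cohesive at each stage) to get largeness of every finite subintersection $\UCM \cap \U_{e_1}^{X_1}\cap\dots\cap\U_{e_k}^{X_k}$, then passing to the countable decreasing limit by \Cref{lem:intersection-still-large}; and minimality follows because if $\langle\UCM\rangle \cap \U_e^X$ is large then so is its upward-closed superclass $\UCM \cap \U_e^X$, forcing $\U_e^X$ into the defining family and hence $\langle\UCM\rangle \subseteq \U_e^X$. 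The only cosmetic issue is that the opening paragraph on containment takes a false start before arriving at the right argument; the final version (using $\UCM = \bigcap_{(e,i)\in C}\U_e^{X_i}$) is the one that matters.
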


\subsection{The $(\U_{C_n}^{\M_n})_{n \in \omega}$ sequence}\label[section]{sect:uc-sequence}

Monin and Patey~\cite[Section 2.5]{monin2021weakness} defined an infinite hierarchy of Scott sets together with a decreasing sequence of minimal classes for these Scott sets, playing a central role in the definition of the notion of forcing. The $n$th level of this hierarchy is responsible for having a good $(n+1)$st jump control.

The following first proposition is an easy consequence of the uniform low basis theorem for~$\Pi^0_1$ classes, due to Lawton (see Hirschfeldt and al.~\cite[Theorem 4.1]{hirschfeldt2008strength}):

\begin{proposition}[\cite{monin2021weakness}]\label[proposition]{mn-seq}
    There exists a sequence of sets \seq{M} such that :
    \begin{itemize}
        \item $M_n$ codes for a countable Scott set $\M_n$,
        \item $\emptyset^{(n)}$ is uniformly coded by an element of $\M_n$,
        \item Each $M_n'$ is uniformly computable in $\emptyset^{(n+1)}$.
    \end{itemize}
\end{proposition}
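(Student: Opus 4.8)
The statement to prove is \Cref{mn-seq}: the existence of a sequence $(M_n)_{n \in \omega}$ such that $M_n$ codes a countable Scott set $\M_n$, $\emptyset^{(n)}$ is uniformly coded by an element of $\M_n$, and each $M_n'$ is uniformly computable in $\emptyset^{(n+1)}$. The plan is to build the sequence by external recursion on $n$, at each step applying the uniform low basis theorem for $\Pi^0_1$ classes (Lawton's theorem, \cite[Theorem 4.1]{hirschfeldt2008strength}) relative to $\emptyset^{(n)}$ to produce a Scott set whose jump is controlled by $\emptyset^{(n+1)}$.

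\textbf{Key steps.} First I would recall the standard construction of a Scott set as a countable ascending union: starting from a set $A$, one enumerates all pairs $(\text{tree code}, \text{previously obtained set})$ and, whenever a code names an infinite binary tree computable from the join so far, one adjoins a path through it; closing under $\oplus$ and $\leq_T$ along the way yields a countable Scott set $\mathcal{S}(A)$ with $A \in \mathcal{S}(A)$. The crucial quantitative point, which is exactly what Lawton's uniform low basis theorem provides, is that this whole process can be carried out so that the resulting code $N$ satisfies $N' \leq_T A'$, uniformly in a code for $A$: each path is chosen by a uniformly low-relative-to-the-current-oracle procedure, and the jumps of the finitely-many-at-a-time extensions can be assembled into a single $A'$-computable function. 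Second, I would set $M_0$ to be such a code for $\mathcal{S}(\emptyset)$, so that $M_0$ codes a Scott set containing $\emptyset^{(0)} = \emptyset$ and $M_0' \leq_T \emptyset'$, all uniformly. Third, for the inductive step, given $M_n$ with the three properties, apply the uniform construction relative to $\emptyset^{(n+1)}$ to obtain $M_{n+1}$ coding a Scott set $\M_{n+1}$ with $\emptyset^{(n+1)} \in \M_{n+1}$ and $M_{n+1}' \leq_T (\emptyset^{(n+1)})' = \emptyset^{(n+2)}$, all uniformly in $\emptyset^{(n+1)}$, hence uniformly in $n$ since $\emptyset^{(n+1)}$ is uniformly computable in $\emptyset^{(n+1)}$. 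Finally I would check that "$\emptyset^{(n)}$ is uniformly coded by an element of $\M_n$" follows because the construction explicitly places the starting oracle $\emptyset^{(n)}$ into the Scott set at a fixed index, uniformly.

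\textbf{Main obstacle.} The only real subtlety is the word \emph{uniformly} appearing three times, and making sure the uniformities compose through the recursion. The construction of a single $M_{n+1}$ from $\emptyset^{(n+1)}$ is uniform by Lawton's theorem; but to get a \emph{single} index for the whole sequence (uniformly in $n$) one must observe that $n \mapsto \emptyset^{(n+1)}$ is not literally uniform — rather, one works with indices for $\emptyset^{(n)}$ relative to $\emptyset$ in the standard jump hierarchy, for which there is a primitive recursive sequence of reduction procedures, and then feeds those indices into the uniform version of the low basis construction. Concretely, I would phrase everything in terms of indices: there is a computable function $f$ such that $\Phi_{f(n)}^{\emptyset^{(n+1)}}$ names $M_{n+1}$, a computable function $g$ such that $\Phi_{g(n)}^{\emptyset^{(n+2)}} = M_{n+1}'$, and a computable function $h$ such that $\Phi_{h(n)}^{M_{n+1}}$ equals a code for $\emptyset^{(n+1)}$ inside $\M_{n+1}$; these come directly from the uniformity clause of Lawton's theorem together with the fixed reductions between iterated jumps. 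Once this bookkeeping is set up, the three displayed bullet points are immediate. I expect this to be short — the work is entirely in citing and relativizing \cite[Theorem 4.1]{hirschfeldt2008strength} and the standard Scott-set construction, not in any new argument; indeed the excerpt already attributes the proposition to \cite{monin2021weakness} and calls it "an easy consequence" of the uniform low basis theorem.
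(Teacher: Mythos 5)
Your proposal is correct and follows exactly the route the paper indicates: the paper gives no proof of its own, simply attributing the proposition to Monin--Patey and noting it is ``an easy consequence of the uniform low basis theorem for $\Pi^0_1$ classes, due to Lawton,'' which is precisely the theorem your construction is built on. The only remark worth adding is that the paper elsewhere uses the nesting $\M_0 \subseteq \M_1 \subseteq \dots$, which your construction yields automatically even though you don't mention it (since $M_n \leq_T M_n' \leq_T \emptyset^{(n+1)} \in \M_{n+1}$ and $\M_{n+1}$ is a Turing ideal), and that your phrase ``uniformly in $n$ since $\emptyset^{(n+1)}$ is uniformly computable in $\emptyset^{(n+1)}$'' is garbled but the index-bookkeeping paragraph that follows it correctly fixes the intended meaning.
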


Then, the next proposition follows from our remark on the complexity of the construction 
of an $\M$-cohesive large class. Indeed, since~$\M_{n+1}$ contains~$M'_n$ and $\M_{n+1}$ is a Scott set, then it contains a PA degre over~$M'_n$. Moreover, this construction is uniform.

\begin{proposition}[\cite{monin2021weakness}]\label[proposition]{cn-seq}
    There exists a sequence of sets \seq{C} such that :
    \begin{itemize}
        \item $\UCNMN$ is an $\M_n$-cohesive large class,
        \item $\U_{C_{n+1}}^{\M_{n+1}} \subseteq \langle \UCNMN \rangle$,
        \item Each $C_n$ is coded by an element of $\M_{n+1}$ uniformly in $n$ and $M_{n+1}$.
    \end{itemize}
\end{proposition}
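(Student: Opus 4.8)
The plan is to construct the sequence $(C_n)_{n\in\omega}$ by recursion on~$n$, at each level reproducing the construction of an $\M$-cohesive large class sketched just before \Cref{lem:cohesive-compatibility}, while keeping careful track of the Scott set in which the oracle driving the construction lives. Throughout I will use the preliminary observation that $\M_n\subseteq\M_{n+1}$: indeed \Cref{mn-seq} gives $M_n\leq_T M_n'\leq_T\emptyset^{(n+1)}$, and $\emptyset^{(n+1)}$ is coded by an element of~$\M_{n+1}$, so $M_n\in\M_{n+1}$, hence every element of~$\M_n$ lies in~$\M_{n+1}$. In particular a class $\U^{\M_n}_C$ is also of the form $\U^{\M_{n+1}}_{C'}$ after a relabelling of the indices of elements of~$\M_n$ that is computable from any element of~$\M_{n+1}$ computing~$M_n$, so the containment $\U^{\M_{n+1}}_{C_{n+1}}\subseteq\langle\U^{\M_n}_{C_n}\rangle$ asserted in the proposition is meaningful.

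The engine of the construction is the following subroutine. Let $\Q=\U^\M_C$ be a large, non-trivial class for some Scott set~$\M$ coded by~$M$. Enumerate $\M$ as $X_0,X_1,\dots$ and build an increasing sequence $C=E^0\subseteq E^1\subseteq\cdots$, where $E^{s+1}$ is obtained from $E^s$ by adding, for every~$k$, the index of the $\Sigma^0_1(Y_s)$ class of sets having a finite subset meeting~$Y_s$ in at least~$k$ points, so that $\U^\M_{E^{s+1}}=\U^\M_{E^s}\cap\L_{Y_s}$, for a choice $Y_s\in\{X_s,\overline{X_s}\}$ keeping $\U^\M_{E^{s+1}}$ large. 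Such a choice always exists: since $\U^\M_{E^s}$ is large and non-trivial, one of $\U^\M_{E^s}\cap\L_{X_s}$ and $\U^\M_{E^s}\cap\L_{\overline{X_s}}$ is large (intersecting a finite cover witnessing the failure of the first with one witnessing the failure of the second, and using upward closure together with non-triviality, would produce a finite cover of~$\omega$ disjoint from~$\U^\M_{E^s}$, contradicting largeness). By \Cref{lem:intersection-still-large} the class $\U^\M_D$ with $D=\bigcup_s E^s$ is large, it is $\M$-cohesive by construction, and it is non-trivial as a subclass of~$\Q$. By a relativisation of \Cref{largenesssentencecomp} together with \Cref{lem:intersection-still-large}, ``$\U^\M_F$ is large'' is $\Pi^0_1(F\oplus M')$ uniformly in~$F$; hence the tree of valid choice sequences $(Y_0,\dots,Y_{s-1})$ is a $\Pi^0_1(C\oplus M')$ tree, infinite by the splitting argument above. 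A path is computed by any PA degree over $C\oplus M'$ (selecting at each node a true alternative among two $\Pi^0_1$ sentences one of which holds, as in Cholak, Jockusch and Slaman~\cite[Lemma 4.2]{cholak_jockusch_slaman_2001}), and from such a path together with~$M$ one computes~$D$. So the subroutine turns any index~$C$ of a large non-trivial $\U^\M_C$ into an index $D\leq_T P\oplus C\oplus M$, for any PA degree~$P$ over $C\oplus M'$, such that $\U^\M_D\subseteq\U^\M_C$ is an $\M$-cohesive large class.

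Now assemble the recursion. For~$C_0$, apply the subroutine to~$\M_0$ and the computable index~$C$ of the class of all infinite sets (which is large, partition regular and non-trivial). This needs a PA degree over~$M_0'$; since $M_0'\leq_T\emptyset^{(1)}$ is coded in the Scott set~$\M_1$, and a Scott set contains PA degrees over each of its members, such a degree exists in~$\M_1$, so the resulting~$C_0$ is coded by an element of~$\M_1$. For the inductive step, suppose $C_n$ has been built with $\U^{\M_n}_{C_n}$ an $\M_n$-cohesive large class coded by an element of~$\M_{n+1}$. By the last lemma of \Cref{sect:minimal-cohesive-classes}, $\langle\U^{\M_n}_{C_n}\rangle$ is an $\M_n$-minimal large class; being an intersection of classes $\U^X_e$ with $X\in\M_n\subseteq\M_{n+1}$, it equals $\U^{\M_{n+1}}_{D_n}$ where $D_n$ collects the pairs $(e,i)$ such that $X_i\in\M_n$ and $\U^{\M_n}_{C_n}\cap\U^{X_i}_e$ is large. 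By \Cref{largenesssentencecomp} the latter condition is $\Pi^0_1(C_n\oplus M_n')$ uniformly, and $X_i\in\M_n$ is decidable from any element of~$\M_{n+1}$ computing~$M_n$, so $D_n\leq_T C_n\oplus M_n'\oplus M_n$, which is coded in~$\M_{n+1}$ since both $C_n$ and $M_n'\leq_T\emptyset^{(n+1)}$ are. The class $\U^{\M_{n+1}}_{D_n}=\langle\U^{\M_n}_{C_n}\rangle$ is non-trivial, being contained in $\L_X$ for some infinite computable $X\in\M_n$ by $\M_n$-cohesiveness. Applying the subroutine to~$\M_{n+1}$ and the index~$D_n$ yields $C_{n+1}\supseteq D_n$ with $\U^{\M_{n+1}}_{C_{n+1}}\subseteq\langle\U^{\M_n}_{C_n}\rangle$ an $\M_{n+1}$-cohesive large class; this needs a PA degree over $D_n\oplus M_{n+1}'$, which exists in~$\M_{n+2}$ because $D_n$ is coded in~$\M_{n+1}\subseteq\M_{n+2}$ and $M_{n+1}'\leq_T\emptyset^{(n+2)}$ is coded in~$\M_{n+2}$. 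Thus $C_{n+1}$ is coded by an element of~$\M_{n+2}$, and the recursion closes, delivering the three asserted properties.

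The main obstacle is exactly the level bookkeeping: the oracle that drives the level-$n$ construction --- a PA degree over~$M_0'$ when $n=0$, and over $D_{n-1}\oplus M_n'$ otherwise --- must be located one level up, inside~$\M_{n+1}$, and for this one needs both that~$M_n'$ is coded in~$\M_{n+1}$ (from \Cref{mn-seq}) and that the minimal-class index~$D_{n-1}$ produced at the previous level is coded in~$\M_{n+1}$, which uses $\M_{n-1}\subseteq\M_n\subseteq\M_{n+1}$ and the $\Pi^0_2$ bound of \Cref{largenesssentencecomp} on the complexity of largeness. Once this is in place there is no further difficulty: the argument is just the $\M$-cohesive-class construction performed level by level, and uniformity is inherited because \Cref{mn-seq}, \Cref{largenesssentencecomp}, the presence of PA degrees in a Scott set, and the selection lemma of~\cite[Lemma 4.2]{cholak_jockusch_slaman_2001} are all uniform.
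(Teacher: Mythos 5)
Your overall strategy---build the $C_n$ by recursion, running the $\M$-cohesive-class subroutine at each level with a PA degree oracle that lives one Scott set higher up---is exactly what the paper sketches in the remark preceding the proposition (the paper itself gives no more than that remark and defers to~\cite{monin2021weakness}). Your subroutine and your handling of the base case are fine, as is the splitting argument showing that one of $\U^\M_{E^s}\cap\L_{X_s}$, $\U^\M_{E^s}\cap\L_{\overline{X_s}}$ stays large.

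The inductive step, however, has a genuine gap at the point where you produce the index $D_n$ for $\langle\U^{\M_n}_{C_n}\rangle$. You define $D_n$ as the set of pairs $(e,i)$ for which $X_i\in\M_n$ and $\U^{\M_n}_{C_n}\cap\U^{X_i}_e$ is large, observe that this condition is $\Pi^0_1(C_n\oplus M_n')$, and conclude $D_n\leq_T C_n\oplus M_n'\oplus M_n$. That conclusion does not follow: a set defined by a $\Pi^0_1(Z)$ predicate is co-c.e.\ in~$Z$, not $Z$-computable, so the cited complexity bound only places $D_n$ at the level of $(C_n\oplus M_n')'\leq_T M_{n+1}'$, i.e.\ inside $\M_{n+2}$ rather than $\M_{n+1}$. (The auxiliary claim that ``$X_i\in\M_n$'' is decidable from any element of $\M_{n+1}$ computing $M_n$ is also not automatic---membership of an $\M_{n+1}$-indexed set in $\M_n$ is a priori $\Sigma^0_2$---and needs either a coding convention or a direct enumeration of $\M_n$ via $M_n$ with re-indexing into $\M_{n+1}$.) The intended tool here is \Cref{lem:compute-luc}: it gives a $D\leq_T C_n$ with $\U^{\M_n}_D=\L(\U^{\M_n}_{C_n})$, and since $\U^{\M_n}_{C_n}$ is $\M_n$-cohesive its unique $\M_n$-minimal subclass $\langle\U^{\M_n}_{C_n}\rangle$ coincides with $\L(\U^{\M_n}_{C_n})$ (this identification is part of the analysis in~\cite{monin2021weakness}). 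With that lemma, $D_n\leq_T C_n\in\M_{n+1}$ and your bookkeeping goes through exactly as you wrote it. Alternatively, even with only the weaker bound $D_n\leq_T M_{n+1}'\in\M_{n+2}$, the recursion still closes---the PA degree over $D_n\oplus M_{n+1}'$ needed at level $n+1$ lies in $\M_{n+2}$ in either case, and one still gets $C_{n+1}\in\M_{n+2}$---but then the intermediate sentence ``$D_n$ is coded in $\M_{n+1}$'' should be weakened accordingly. Either fix is fine; as written, the stated computability of $D_n$ is not established.
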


\section{Forcing framework}\label[section]{sect:forcing-framework}

We now develop the general framework for iterated jump control of solutions to the Erd\H{o}s-Moser theorem through forcing. This framework will be applied in sections~\ref{sect:strong-avoidance-arith}, \ref{sect:layerwise-avoidance} and~\ref{sect:effective-constructions} to prove our three main theorems.

\subsection{Forcing conditions}

In order to obtain a layerwise version of strong cone avoidance of~$\EM$ for $\Sigma^0_n$ operators, our notion of forcing will be parameterized by a partition regular class~$\P$. 
Assuming $\P \subseteq \langle \UCNMN \rangle$, this notion of forcing will have a good $(n+1)$st jump control. All along \Cref{sect:forcing-framework}, one should think of~$\P$ as the partition regular class~$\bigcap_{n \in \omega} \langle \UCNMN \rangle$.

\begin{definition}\label[definition]{def:condition}
    Given a partition regular class~$\P \subseteq 2^\omega$, let $\PP_\P$ denote the set of all 3-tuples
     $(\vec{R},\sigma,X)$ such that
    \begin{enumerate}
        \item \vr~is a finite sequence of tournaments,
        \item $X \cap \{0, \dots, |\sigma| \} = \emptyset$, 
        \item $X \in \P$.
        \item for all $y \in X$, $\sigma \cup \{y\}$ is \vr-transitive.
        \item $X$ is included in a minimal $\vec{R}$-interval of~$\sigma$
    \end{enumerate}
\end{definition}

In other words, $\PP_\P$ is the set of all 3-tuples $(\vec{R}, \sigma, X)$ such that $(\sigma, X)$ forms an EM condition for each tournament~$R \in \vec{R}$, and such that $X \in \P$. Note that no effectiveness restriction is given on the reservoir~$X$. Given a tournament~$T$, in order to produce an infinite $T$-transitive subset, one will work with sufficiently generic filters containing the condition $(T, \emptyset, \omega)$.
From now on, fix a partition regular class~$\P$.


\begin{definition}
    We define the partial order over $\PP_\P$ as following : we say that $(\tau,Y, \vec{S}) \leq (\sigma,X, \vec{R})$ if $\sigma \preceq \tau, Y \subseteq X$, $\tau \setminus \sigma \subseteq X$, and $\vec{R} \subseteq \vec{S}$.
\end{definition}

Here again, the extension relation $(\tau,Y, \vec{S}) \leq (\sigma,X, \vec{R})$ is the usual  Mathias extension $(\tau, Y) \leq (\sigma, X)$, but in addition, one commits to be transitive for more and more tournaments simultaneously.
Given a collection $\F \subseteq \PP_\P$, we let $G_{\F} := \bigcup_{(\vec{R},\sigma,X) \in \F} \sigma$.

\begin{lemma}\label[lemma]{lem:gf-transitive}
    Let $\F$ be a $\PP_\P$-filter. For all $c := ( \vec{R},\sigma,X)$, the set $G_\F$ is $\vec{R}$-transitive.
\end{lemma}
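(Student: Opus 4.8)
The plan is to show that any two conditions in the filter $\F$ have a common lower bound whose first coordinate (the finite transitive set) extends both, and then use $\vec R$-transitivity of that finite set together with the defining properties of $\PP_\P$. Concretely, fix $c = (\vec R, \sigma, X) \in \F$ and suppose $a, b, c' \in G_\F$ with, say, $R(a,b)$ and $R(b,c')$ for some $R \in \vec R$; I want $R(a,c')$. Each of $a,b,c'$ enters $G_\F$ through some condition's finite part, so there are conditions $c_1, c_2, c_3 \in \F$ with $a \in F_{\sigma_1}$, $b \in F_{\sigma_2}$, $c' \in F_{\sigma_3}$ where $c_i = (\vec R_i, \sigma_i, X_i)$. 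Since $\F$ is a filter, it is downward directed, so there is a single condition $d = (\vec S, \tau, Y) \in \F$ with $d \leq c$ and $d \leq c_i$ for $i = 1,2,3$. By the extension relation, $\sigma_i \preceq \tau$ for each $i$, so $a, b, c' \in F_\tau$, and also $\vec R \subseteq \vec S$ so $R \in \vec S$.

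Now I would invoke the defining properties of $d \in \PP_\P$: property (4) of \Cref{def:condition} is stated only for singletons $y \in X$, but what I actually need is that $\tau$ itself is $\vec S$-transitive, in particular $R$-transitive, which is the real content to extract. The cleanest route is to prove, as a preliminary observation, that for every condition $(\vec R, \sigma, X) \in \PP_\P$ the finite set $\sigma$ is $\vec R$-transitive; this follows by induction on the extension relation starting from $(T, \emptyset, \omega)$ (or directly from the structure of $\PP_\P$ together with \Cref{lem:em-condition-combi}): whenever one passes from $(\vec R, \sigma, X)$ to $(\vec S, \tau, Y)$ with $\tau \setminus \sigma \subseteq X$, each new element $y$ satisfies that $\sigma \cup \{y\}$ is $\vec R$-transitive by property (4), and the minimal-interval condition (5) ensures the new elements are added coherently so that $\tau$ stays transitive for every tournament in $\vec R \subseteq \vec S$. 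Granting that $\tau$ is $\vec S$-transitive and $R \in \vec S$, from $R(a,b)$ and $R(b,c')$ with $a,b,c' \in F_\tau$ we get $R(a,c')$, which is exactly what we wanted.

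The main obstacle is the gap between the per-singleton condition (4)/(5) in the definition and the genuine transitivity of the whole finite set $\sigma$. One must check that adding elements one at a time from a minimal $\vec R$-interval preserves $\vec R$-transitivity — i.e., that the conditions in \Cref{def:condition} are strong enough to guarantee this — rather than merely that each singleton extension is locally fine. This is where \Cref{lem:em-condition-combi} and the minimal-interval bookkeeping do the work, and it is the only non-bookkeeping step; everything else (the filter being directed, the extension relation transporting $\sigma_i$ into $\tau$, reading off transitivity of $R$ from that of $\vec S$) is routine. I would therefore structure the proof as: (i) remark that each $\sigma$ occurring in a condition is $\vec R$-transitive, proving it via the extension relation and the minimal-interval property; (ii) given $a,b,c' \in G_\F$, use directedness of $\F$ to find $d = (\vec S,\tau,Y) \leq c$ with $a,b,c' \in F_\tau$ and $\vec R \subseteq \vec S$; (iii) conclude from transitivity of $\tau$ for $\vec S \supseteq \vec R$.
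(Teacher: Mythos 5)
Your proposal takes essentially the same route as the paper: fix a putative $R$-cycle $a,b,c'$ in $G_\F$, use directedness of the filter to find a single condition $d = (\vec S,\tau,Y) \leq c$ with $a,b,c' \in F_\tau$ and $\vec R \subseteq \vec S$, and conclude from $\vec S$-transitivity of $\tau$. The one place where you over-engineer is the step you flag as ``the main obstacle'': you claim that passing from the per-singleton clause (4) of \Cref{def:condition} to genuine $\vec S$-transitivity of the whole finite set $\tau$ requires an induction on the extension relation invoking \Cref{lem:em-condition-combi} and the minimal-interval bookkeeping. It does not. Transitivity is inherited by subsets: any $R$-cycle inside $\tau$ would also be an $R$-cycle inside $\tau \cup \{y\}$. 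Since $Y \in \P$ and $\P$ is a non-trivial partition regular class, $Y$ is infinite, hence non-empty; pick any $y \in Y$, apply clause (4) to get that $\tau \cup \{y\}$ is $\vec S$-transitive, and conclude immediately that $\tau$ is. So the observation is definitional, not inductive, and \Cref{lem:em-condition-combi} plays no role here (its role in the paper is to show that extensions \emph{exist}, not to verify that conditions already in $\PP_\P$ are transitive). With that shortcut in place, your argument is exactly the paper's.
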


\begin{proof}
    Suppose otherwise. Then, there exists $x < y < z \in G_\F$ a $\vec{R}$-cycle. By definition of $G_\F$, there exists $d :=( \vec{R}\vec{S}, \tau, Y) \in \F$, such that $ d \leq c$ and $\{ x,y,z \} \subseteq \tau$. Since $\tau$ is $\vec{R}\vec{S}$-transitive, and thus $\vec{R}$-transitive, $(x,y,z)$ cannot be a $\vec{R}$-cycle.
\end{proof}

\subsection{Forcing question}

In this section, we design a forcing question as explained in \Cref{sect:picture-forcing-question}. The general idea is the following: given a $\Sigma^0_1$ formula $(\exists x )\varphi(G, x)$ and a condition $(\vec{R}, \sigma, X)$, one would like to ask whether there exists some~$x \in \omega$ and a finite set $\tau \subseteq X$ satisfying some good combinatorial properties, such that $\varphi(\sigma \cup \tau, x)$ holds. This naive question is $\Sigma^0_1(\vec{R} \oplus X)$. As explained in \Cref{sect:combinatorics-em}, one can get rid of the parameter~$\vec{R}$ by universally quantifying over all $m$-tuples of tournaments, where~$m = |\vec{R}|$. By compactness, the question becomes~$\Sigma^0_1(X)$, which is not enough, since~$X$ can be computationally very complex. The same overapproximation trick cannot be applied for~$X$, since the class of all infinite sets is closed, but not compact. One must therefore use a second trick: consider the class~$\L$ of all reservoirs~$Y$ such that this property holds, that is, of all $Y$ such that there exists some~$x \in \omega$ and a finite set $\tau \subseteq X$ satisfying some good combinatorial properties, such that $\varphi(\sigma \cup \tau, x)$ holds. Then, ask whether~$\L \cap \U^{\M_0}_{C_0}$ is large. If it does, then by $\M_0$-minimality of~$\P$, $\P \subseteq \L$, and since~$X \in \P$, the property holds for~$X$ in particular.

Before giving the actual definition of the forcing question, let us introduce a very convenient piece of notation. As explained, given a condition $(\vec{R}, \sigma, X)$, since no effectiveness constraint is imposed on~$\vec{R}$, one will often resort to an over-approximation of~$\vec{R}$. This over-approximation~$\C$ has two essential properties: (1) it must contain~$\vec{R}$, and (2) it must be $X$-effectively compact. One can exploit the $\Pi^0_1$ constraints in the definition of a condition to obtain a finer over-approximation: $X$ is included in a minimal interval of~$\sigma$ and $\sigma \cup \{y\}$ is $\vec{R}$-transitive for every~$y \in X$. This yields the following definition:

\begin{definition}
Let~$(\sigma, X)$ be a Mathias condition.
For every~$m \in \omega$, $\C_m(\sigma, X)$ is the class of all $m$-tuples $\vec{R}$ of tournaments such that for all $y \in X$, $\sigma \cup \{y\}$ is \vr-transitive, and such that $X$ is included in a minimal $\vec{R}$-interval of $\sigma$. 
\end{definition}

In particular, for every condition $(\vec{R}, \sigma, X)$, letting~$m = |\vec{R}|$, $\vec{R} \in \C_m(\sigma, X)$. Thus, the class~$\C_m(\sigma, X)$ is an over-approximation of~$\vec{R}$. On the other direction, if $(\sigma, X)$ is a Mathias condition such that $X \in \P$, then for all $m \in \omega$ and for all $\vec{R} \in \C_m(\sigma,X)$, the 3-tuple $(\vec{R}, \sigma, X)$ is a condition in $\PP_\P$. The following lemma shows that the over-approximation~$\C_m(\sigma, X)$ is $X$-effectively compact, as desired.

\begin{lemma}\label[lemma]{lem:approx-pi01}
For every Mathias condition $(\sigma, X)$ and every~$m \in \omega$,
$\C_m(\sigma, X)$ is $\Pi^0_1(X)$.
\end{lemma}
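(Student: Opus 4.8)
The plan is to unwind the definition of $\C_m(\sigma, X)$ directly and to check that each clause contributes at most a $\Pi^0_1(X)$ constraint. We regard an $m$-tuple $\vec{R} = (R_1, \dots, R_m)$ of tournaments on $\omega$ as a single element of $2^\omega$ via a fixed computable pairing, so that the atomic relations ``$R_i(a,b)$'' are uniformly computable in $\vec{R}$. First, the set of $m$-tuples each of whose coordinates is a tournament (irreflexive, and for $a \neq b$ exactly one of $R_i(a,b)$, $R_i(b,a)$) is $\Pi^0_1$, indeed computable, as a subset of $2^\omega$.

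Next, consider the clause ``for all $y \in X$, $\sigma \cup \{y\}$ is $\vec{R}$-transitive''. For a fixed $y$, the set $F_\sigma \cup \{y\}$ is a finite set of numbers, and ``$F_\sigma \cup \{y\}$ is $R_i$-transitive'' is a finite Boolean combination of the atoms $R_i(a,b)$ with $a,b \in F_\sigma \cup \{y\}$; hence it is a clopen condition on $\vec{R}$, uniformly in $y$ and $i \le m$. Therefore the whole clause has the form $\forall y\,(y \in X \rightarrow \theta(\vec{R}, y))$ with $\theta$ decidable in $\vec{R} \oplus y$, which defines a $\Pi^0_1(X)$ class of tuples $\vec{R}$.

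Now consider the clause ``$X$ is included in a minimal $\vec{R}$-interval of $\sigma$''. Since $(\sigma,X)$ is required to form an $\EM$-condition for each $R \in \vec{R}$ (cf.\ \Cref{def:condition}), this unravels, coordinate by coordinate, to: for every $i \le m$ there are $a, b \in F_\sigma \cup \{-\infty, +\infty\}$ with $(a,b) \cap F_\sigma = \emptyset$ and $X \subseteq (a,b)$, where $(a,b) = \{x : R_i(a,x) \wedge R_i(x,b)\}$ with the conventions that $R_i(-\infty, x)$ and $R_i(x, +\infty)$ always hold. Since $F_\sigma$ is finite, ``there are $a,b$'' is a \emph{finite} disjunction. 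For each fixed pair $(a,b)$: the condition $(a,b) \cap F_\sigma = \emptyset$ only inspects the atoms $R_i(a,z), R_i(z,b)$ for $z \in F_\sigma$, hence is clopen in $\vec{R}$; and $X \subseteq (a,b)$ reads $\forall x\,(x \in X \rightarrow R_i(a,x) \wedge R_i(x,b))$, which is $\Pi^0_1(X)$. A finite disjunction, and then a finite conjunction over $i \le m$, of $\Pi^0_1(X)$ classes is again $\Pi^0_1(X)$, since effectively closed classes relative to $X$ are closed under finite unions and intersections.

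Finally, $\C_m(\sigma, X)$ is the intersection of the tournament-tuple constraint with the two $\Pi^0_1(X)$ classes analyzed above, hence is $\Pi^0_1(X)$; an index for it is obtained uniformly from $\sigma$, $m$ and an oracle for $X$. The only point requiring a little care is the second clause: one must observe that ``there is a minimal interval'' quantifies over the finite set of endpoint pairs drawn from $F_\sigma \cup \{-\infty,+\infty\}$, and that for each such pair the membership of points of $X$ is queried only inside a single universal number quantifier — this is exactly where finiteness of $\sigma$ is used — so that no quantifier alternation beyond $\Pi^0_1(X)$ is incurred.
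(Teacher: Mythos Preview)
Your proof is correct and follows exactly the approach the paper has in mind: the paper's own proof is the single line ``Immediate. Both properties are $\Pi^0_1$ formulas in~$X$.'' You simply spell out the verification that each defining clause of $\C_m(\sigma,X)$ is $\Pi^0_1(X)$, with the only nontrivial observation being that the existential over interval endpoints ranges over the finite set $F_\sigma \cup \{-\infty,+\infty\}$ and hence does not raise the complexity.
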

\begin{proof}
Immediate. Both properties are $\Pi^0_1$ formulas in~$X$.
\end{proof}


The following combinatorial lemma is essentially a reformulation of \Cref{lem:em-condition-combi}. If one furthermore assumes that~$Y \in \P$, then $(\vec{R}, \sigma \cup \tau, Y)$ is a valid $\PP_\P$-condition.

\begin{lemma}\label[lemma]{lem:extension}
    Let $(\vec{R},\sigma,X)$ be a condition, $\tau \subseteq X$ be a finite $\vec{R}$-transitive set and $Y \subseteq X$ be an infinite set such that for every~$R \in \vec{R}$, $\tau \to_R Y$ or $Y \to_R \tau$. Then, $\vec{R} \in \C_{|\vec{R}|}(\sigma \cup \tau, Y)$.
\end{lemma}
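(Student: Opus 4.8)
The plan is to reduce to a single tournament and then read everything off the linear-order picture. Fix $R \in \vec{R}$. Because $(\vec{R},\sigma,X)$ is a condition, clauses (4) and (5) of \Cref{def:condition} applied to $R$ say exactly that $\sigma\cup\{y\}$ is $R$-transitive for every $y\in X$ and that $X$ is contained in some minimal $R$-interval $(a,b)$ of $\sigma$; so $(\sigma,X)$ is an $\EM$-condition for $R$. On the other side, $\vec{R}\in\C_{|\vec{R}|}(\sigma\cup\tau,Y)$ unfolds to the two requirements that $(\sigma\cup\tau)\cup\{y\}$ be $R$-transitive for every $y\in Y$ and that $Y$ be contained in a minimal $R$-interval of $\sigma\cup\tau$, for each $R\in\vec{R}$; so it suffices to check these for a fixed $R$.

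Identify the $R$-transitive set $\sigma$ with the linear order $\leq_R$ it induces. Since $X$ sits in the minimal interval $(a,b)$, every $z\in X$ is placed, in the order $\sigma\cup\{z\}$, strictly between $a$ and $b$ with no element of $\sigma$ in between; hence for $x\in\sigma$ we get $R(x,z)$ when $x\leq_R a$ and $R(z,x)$ when $x\geq_R b$. Using this with $z$ ranging over the finite $\leq_R$-chain $u_1<_R\dots<_R u_l$ that is $\tau$ (all inside $(a,b)$), a short case check on triples gives that $\sigma\cup\tau$ is $R$-transitive: triples inside $\sigma$ or inside $\tau$ are fine; a triple with two points in $\sigma$ is fine because $\sigma\cup\{u\}$ is $R$-transitive; and a triple $\{x,u,v\}$ with $x\in\sigma$, $u,v\in\tau$ is fine because $x$ is a common $\leq_R$-source ($x\leq_R a$) or a common sink ($x\geq_R b$) of $u$ and $v$. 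Feeding the hypothesis ($\tau\to_R Y$ or $Y\to_R\tau$) into the same analysis handles the one extra point $y\in Y$: the triples $\{u,v,y\}$ with $u,v\in\tau$ and $\{x,u,y\}$ with $x\in\sigma$, $u\in\tau$ cannot be $R$-cycles because $y$ (resp.\ $x$) is a common source or sink; hence $(\sigma\cup\tau)\cup\{y\}$ is $R$-transitive for every $y\in Y$.

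It remains to locate $Y$. If $\tau\to_R Y$, then every $y\in Y$ satisfies $u_l<_R y<_R b$, and since $u_l$ is $\leq_R$-maximal in $\tau$ and $(a,b)\cap\sigma=\emptyset$, no element of $\sigma\cup\tau$ lies strictly between $u_l$ and $b$; so $Y$ is contained in the minimal $R$-interval $(u_l,b)$ of $\sigma\cup\tau$ (taking $u_l=a$ when $\tau=\emptyset$, in which case the claim is immediate from $(\sigma,X)$ being an $\EM$-condition). Symmetrically, if $Y\to_R\tau$ then $Y\subseteq(a,u_1)$, again a minimal $R$-interval of $\sigma\cup\tau$. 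As $R\in\vec{R}$ was arbitrary, $\vec{R}\in\C_{|\vec{R}|}(\sigma\cup\tau,Y)$.

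The only point requiring care — and the reason \Cref{lem:em-condition-combi} cannot be quoted verbatim — is that $\C_{|\vec{R}|}$ drops the Mathias-condition clause $Y\cap\{0,\dots,|\sigma\cup\tau|\}=\emptyset$, so no ordering between $\tau$ and $Y$ is assumed here, whereas \Cref{lem:em-condition-combi} was stated with $\max\tau<\min Y$ precisely to produce a genuine Mathias condition. Consequently one reproves the interval bookkeeping directly rather than invoking the earlier lemma as a black box, but no genuinely new idea is needed.
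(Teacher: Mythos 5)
Your proof is correct and follows the same basic structure as the paper's: a direct case analysis on the three points of a putative $R$-cycle, together with a verification that $Y$ sits in a minimal interval of $\sigma\cup\tau$. You spell out the interval bookkeeping more explicitly than the paper does (identifying the minimal interval as $(u_l,b)$ or $(a,u_1)$, where the paper only asserts the claim in one sentence), and your closing observation — that $\C_{|\vec{R}|}$ drops the positional clause $Y\cap\{0,\dots,|\sigma\cup\tau|\}=\emptyset$, so \Cref{lem:em-condition-combi} cannot simply be cited — is a genuinely useful clarification that the paper leaves implicit.
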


\begin{proof}

Since~$(\vec{R},\sigma, X)$ is a condition, $X$ is included in a minimal interval of~$\sigma$ for all $R \in \vec{R}$. Furthermore, since~$\tau \cup Y \subseteq X$, and $\tau \to_R Y$ or $Y \to_R \tau$, then $Y$ is included in a minimal interval of $\sigma \cup \tau$ for all $R \in \vec{R}$.

Let $y \in Y$.
Suppose for the contradiction that there exists a 3-cycle $x < y < z$ in $\sigma \cup \tau \cup \{y\}$. 
\begin{itemize}
    \item If $x, y \in \sigma$ then since~$(\vec{R}, \sigma, X)$ is a condition, $\sigma \cup \{z\}$ is $R$-transitive, so $\{x,y,z\}$ is not a 3-cycle.
    \item If $x \in \sigma$ and $y, z \in \tau \cup \{a\}$, then since $X$ is included in a minimal interval of~$\sigma$, $x \to_R \{y, z\}$ or $\{y, z\} \to_R x$, hence $\{x,y,z\}$ is not a 3-cycle.
    \item If $x, y, z \in \tau$, then $\{x,y,z\}$ is not a 3-cycle by $R$-transitivity of $\tau$
    \item If $x, y \in \tau$ and $z = a$, then since $\tau \to_R Y$ or $Y \to_R \tau$, then $\{x,y,z\}$ is not a 3-cycle.
\end{itemize}
\end{proof}

We are now ready to define the forcing question. Since $\vec{R}$ is only accessed through an over-approximation, and $X$ through largeness, the forcing question is parameterized only by the initial segment $\sigma$ and the number $m$ of tournaments, rather than by the condition~$(\vec{R}, \sigma, X)$.

\begin{definition}
Let  $c := ( \vec{R}, \sigma,X) \in \PP_\P$ be a condition and $m \geq |\vec{R}|$. Consider $(\exists x) \psi_e(G,x)$ a $\Sigma_1^0$ formula.  We define the $\qvdash$ relation as follows:

  $$ \sigma \qvdash_m (\exists x)~\psi_e(G,x) $$
    holds if the class of all $Y \in \U_{C_0}^{\M_0}$ such that
    for every~$m$-tuple of 2-colorings $\vec{h} = h_0, \dots, h_{m-1} \in 2^Y$ and every $\vec{S} \in \C_{m}(\sigma,Y)$, there is a finite $\tau \subseteq Y \setminus \{0, \dots, |\sigma|\}$ which is $\vec{S}$-transitive and $\vec{h}$-homogeneous, and some~$x \in \omega$ such that $ \psi_e(\sigma \cup \tau,x)$
    is large.

    Inductively, for $n \geq 1$, consider $(\exists x) \psi_e(G,x)$ a $\Sigma_{n+1}^0$ formula. We define the $\qvdash$ relation as follows:

 $$ \sigma \qvdash_m (\exists x)~\psi_e(G,x)$$
   holds if the class of all $Y \in \U_{C_n}^{\M_n}$ such that
    for every~$m$-tuple of 2-colorings $\vec{h} = h_0, \dots, h_{m-1} \in 2^Y$ and every $\vec{S} \in \C_{m}(\sigma,Y)$, there is a finite $\tau \subseteq Y \setminus \{0, \dots, |\sigma|\}$ which is $\vec{S}$-transitive and $\vec{h}$-homogeneous, some $x \in \omega$ and $\ell \geq m$ such that $\sigma \cup \tau \nqvdash_\ell \neg \psi_e(G, x)$
    is large.
\end{definition}

The over-approximation of~$\vec{R}$ comes at no extra cost from a definitional complexity viewpoint. On the other hand, over-approximating the reservoir~$X$ by a large class yields a $\Pi_1^0(\M_{n+1})$ forcing question for~$\Sigma^0_{n+1}$ formulas, which is sufficient for arithmetic reductions, but not a layerwise cone avoidance.

\begin{lemma}\label[lemma]{lem:question-below-complexity}
    Let $n \in \omega$ and $c := ( \vec{R}, \sigma,X) \in \PP_\P$. Consider $(\exists x) \psi_e(G,x)$ a $\Sigma_{n+1}^0$ formula. The formula $(\sigma \qvdash_m (\exists x) \psi_e(G,x))$ is $\Pi_1^0(\M_{n+1})$, for all $m \in \omega$.
\end{lemma}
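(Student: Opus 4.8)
The statement to prove is that for $\Sigma^0_{n+1}$ formulas $(\exists x)\psi_e(G,x)$, the relation $\sigma \qvdash_m (\exists x)\psi_e(G,x)$ is $\Pi^0_1(\M_{n+1})$ uniformly in $m$. The natural approach is induction on $n$, following exactly the recursive structure of the definition of $\qvdash$. The key preliminary observations are: (i) by \Cref{lem:approx-pi01}, for every Mathias condition $(\sigma,Y)$ and every $\ell$, the class $\C_\ell(\sigma,Y)$ is $\Pi^0_1(Y)$, and moreover this is uniform (the index does not depend on $Y$, only on the formula shape); (ii) for each fixed reservoir candidate $Y$, the inner property ``for every $\vec h\in 2^Y$ and every $\vec S\in\C_m(\sigma,Y)$ there is a finite $\vec S$-transitive $\vec h$-homogeneous $\tau\subseteq Y\setminus\{0,\dots,|\sigma|\}$ and some $x$ such that $[\ldots]$'' is of bounded complexity once we know the complexity of the innermost clause; and (iii) the outer wrapper is always ``the class of all such $Y\in\U^{\M_n}_{C_n}$ is large'', to which we apply the complexity bound of \Cref{largenesssentencecomp} (relativized) together with \Cref{lem:intersection-still-large}.

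\textbf{Base case $n=0$.} Here $\psi_e$ is $\Sigma^0_1$, so the innermost clause $\psi_e(\sigma\cup\tau,x)$ is a $\Sigma^0_1$ statement about the finite object $\sigma\cup\tau$ and $x$, hence decidable uniformly. For a fixed $Y$, consider the class $\L(\sigma,m,Y)$ of reservoirs $Z$ witnessing the inner property. One checks that membership ``$Y$ is such that the inner property holds'' can be written with a $\Pi^0_1(Y)$ quantifier over $\vec S\in\C_m(\sigma,Y)$ (using \Cref{lem:approx-pi01}), a $\forall$ over the finitely-determined-per-length colorings $\vec h\in 2^Y$, and a $\Sigma^0_1(Y)$ search for $\tau$ and $x$; by compactness of $\C_m(\sigma,Y)$ as an effectively closed set, the whole thing collapses to $\Sigma^0_1(Y)$. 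Thus the collection $\{Y\in\U^{\M_0}_{C_0}: Y \text{ has the inner property}\}$ is a $\Sigma^0_1(\M_0)$ class (intersection of $\U^{\M_0}_{C_0}$, itself a countable intersection of $\Sigma^0_1(\M_0)$ classes, with a $\Sigma^0_1(\M_0)$ class — one must be slightly careful and instead argue via finite subclasses as in the discussion after \Cref{largenesssentencecomp}). By that discussion, ``$\U^{\M_0}_F\cap(\text{inner class})$ is large'' is $\Pi^0_2(\M_0)$, hence $\Pi^0_1(M_0')$, uniformly in finite $F$; taking the conjunction over all finite $F\subseteq C_0$ via \Cref{lem:intersection-still-large} gives a $\Pi^0_1(C_0\oplus M_0')$ statement. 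Since $M_0'$ and $C_0$ are both coded in $\M_1$ (by \Cref{mn-seq} and \Cref{cn-seq}), this is $\Pi^0_1(\M_1)$, as required.

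\textbf{Inductive step.} Assume the claim for $n-1$, i.e., $\sigma\cup\tau\qvdash_\ell\varphi$ is $\Pi^0_1(\M_n)$ for $\Sigma^0_n$ formulas $\varphi$, uniformly in $\ell$. For a $\Sigma^0_{n+1}$ formula $(\exists x)\psi_e(G,x)$, the innermost clause is now ``$\exists x\,\exists \ell\geq m\,[\sigma\cup\tau\nqvdash_\ell\neg\psi_e(G,x)]$''. Here $\neg\psi_e(G,x)$ is $\Pi^0_n$, equivalently we may take $\psi_e$ so that $\neg\psi_e$ is handled as a $\Sigma^0_n$ formula up to the recursion's bookkeeping — the point is that by the induction hypothesis $\sigma\cup\tau\qvdash_\ell(\cdot)$ is $\Pi^0_1(\M_n)$, so its negation $\nqvdash_\ell$ is $\Sigma^0_1(\M_n)$, and prefixing $\exists x\,\exists\ell$ keeps it $\Sigma^0_1(\M_n)$. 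Then, exactly as in the base case, for fixed $Y$ the inner property is $\Sigma^0_1(\M_n\oplus Y)$ (the $\Pi^0_1(Y)$ quantifier over $\vec S\in\C_m(\sigma,Y)$ is absorbed by compactness), so the class of good $Y\in\U^{\M_n}_{C_n}$ is a $\Sigma^0_1(\M_n)$-type class, and ``it is large'' is $\Pi^0_2(\M_n)$ on finite subclasses, hence $\Pi^0_1(M_n')$, hence (intersecting over finite $F\subseteq C_n$ by \Cref{lem:intersection-still-large}) $\Pi^0_1(C_n\oplus M_n')$, which is $\Pi^0_1(\M_{n+1})$ by \Cref{mn-seq} and \Cref{cn-seq}.

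\textbf{Main obstacle.} The delicate point is not the raw counting of quantifiers but handling $\U^{\M_n}_{C_n}$ (a countable, not finite, intersection of $\Sigma^0_1(\M_n)$ classes) when invoking the largeness-complexity lemma: \Cref{largenesssentencecomp} is stated for a single $\Sigma^0_1$ class. The resolution — which must be spelled out — is the one indicated in the text right after \Cref{largenesssentencecomp}: largeness of the full intersection is, by \Cref{lem:intersection-still-large}, equivalent to largeness of every finite sub-intersection, each of which is genuinely $\Sigma^0_1(\M_n)$ with a uniform index, so we get a $\Pi^0_2(\M_n)$-uniformly-in-$F$ family and take the conjunction over $F\subseteq C_n$, landing in $\Pi^0_1(C_n\oplus M_n')\subseteq\Pi^0_1(\M_{n+1})$. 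A secondary subtlety is verifying the \emph{uniformity} in $m$ throughout: since $\C_m(\sigma,Y)$ has a $\Pi^0_1(Y)$ index depending only on $m$ (and $\sigma$, $|\sigma|$) and the colorings $\vec h$ range over a set determined by $m$ and the length of $\tau$, all the index computations above go through uniformly, which is what the induction hypothesis must also assert to close the recursion.
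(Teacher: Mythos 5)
Your proposal is correct and follows essentially the same route as the paper: induction on $n$, a compactness argument (quantifying universally over the compact spaces $2^Y$ and $\C_m(\sigma,Y)$ with a $\Sigma^0_1$ matrix) to show the class $\L$ of good reservoirs is $\Sigma^0_1(\M_n)$, and then the reduction of largeness of $\L \cap \U^{\M_n}_{C_n}$ to largeness of finitely many $\Sigma^0_1(\M_n)$ intersections via \Cref{lem:intersection-still-large}, landing in $\Pi^0_1(C_n \oplus M_n') \subseteq \Pi^0_1(\M_{n+1})$ by \Cref{mn-seq} and \Cref{cn-seq}. Your "main obstacle" paragraph correctly identifies the same subtlety the paper handles, and the uniformity-in-$m$ observation is a small but genuine clarification of what the induction hypothesis must carry.
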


\begin{proof}

    We prove this result inductively over~$n$.
    The sentence is of the form \qt{$\L \cap \U_{C_n}^{\M_n}$ is large}, where
    $\L$ is the class of all $Y$ such that
    for every~$m$-tuple of 2-colorings $\vec{h} = h_0, \dots, h_{m-1} \in 2^Y$ and every $\vec{S} \in \C_{m}(\sigma,Y)$, there is a finite $\tau \subseteq Y \setminus \{0, \dots, |\sigma|\}$ which is $\vec{S}$-transitive and $\vec{h}$-homogeneous, and some~$x \in \omega$ such that 
    \begin{itemize}
        \item for $n = 0$, $\psi_e(\sigma \cup \tau,x)$.
        \item for $n > 0$, there is some $\ell \geq m$ such that $\sigma \cup \tau \nqvdash_\ell \neg \psi_e(G, x)$.
    \end{itemize}

    By a compactness argument, $\L$ is also the class of all $Y$ such that there exists $t \in \omega$ such that for every $m$-tuple of 2-colorings $\vec{h} = h_0, \dots, h_{m-1} \in 2^t$ and every $m$-tuple  $\vec{S}$ of tournaments over $\{0, \dots t\}$ such that for all $y \in \{0,\dots,t \} \cap Y, \sigma \cup \{y\}$ is $\vec{S}$-transitive, there is a finite $\tau \subseteq \{0, \dots ,t \} \cap Y \setminus \{0, \dots, |\sigma|\}$ which is $\vec{S}$-transitive and $\vec{h}$-homogeneous, and some~$x \in \omega$ such that
    \begin{itemize}
        \item for $n = 0$, $\psi_e(\sigma \cup \tau,x)$.
        \item for $n > 0$, there is some $\ell \geq m$ such that $\sigma \cup \tau \nqvdash_\ell \neg \psi_e(G, x)$.
    \end{itemize}
    The first item is $\Sigma^0_1$, and by induction hypothesis, the second item $\sigma \cup \tau \nqvdash \neg \psi_e(G,x)$ is $\Sigma_1^0(\M_n)$, hence $\L$ is a  $\Sigma^0_1(\M_n)$ class.

    By \Cref{lem:intersection-still-large}, the class $\L \cap \U_{C_n}^{\M_n}$ is large if and only if for all finite set $F \subseteq C_n$, $\L \cap \U_F^{\M_n}$ is also large. Since $\L \cap \U_F^{\M_n}$ is $\Sigma^0_1(M_n)$ uniformly in~$F$, then by a relativized \Cref{largenesssentencecomp}, the sentence \qt{$\L \cap \U_F^{\M_n}$ is large} is $\Pi_2^0(M_n)$ uniformly in~$F$, hence a $\Pi_1^0(M_n')$ sentence uniformly in $F$, and thus, by \Cref{mn-seq}, a $\Pi_1^0(\emptyset^{(n+1)})$ sentence. This makes $\L \cap \U_{C_n}^{\M_n}$ largeness a $\Pi_1^0(C_n \oplus \emptyset^{(n+1)})$ sentence, Moreover, by \Cref{cn-seq} and \Cref{mn-seq},  $(C_n \oplus \emptyset^{(n+1)}) \in \M_{n+1}$, hence, the sentence \qt{$\L \cap \U_{C_n}^{\M_n}$ is large} is a $\Pi_1^0(\M_{n+1})$ sentence.

\end{proof}

We now define the forcing relation for arithmetic formulas. The base cases for $\Sigma^0_1$ and $\Pi^0_1$ formulas, as well as the $\Sigma^0_{n+1}$ case, are quite straightforward. The interesting case is for~$\Pi^0_{n+1}$ formulas $(\forall x)\neg \psi_e(G, x)$: it asserts that for every~$x \in \omega$ and extension~$d = (\vec{R}\vec{S}, \sigma \cup \tau, Y)$ of~$c$, the forcing question $\sigma \cup \tau \qvdash_{|\vec{R}\vec{S}|} \neg \psi_e(G, x)$ will hold. Assuming that the forcing question meets its specifications, that is, if the forcing question holds for a formula, then there exists an extension forcing this formula, then the forcing relation for~$\Pi^0_{n+1}$ formulas is a density statement: it asserts that for every~$x \in \omega$, the set of conditions forcing~$\neg \psi_e(G, x)$ is dense below~$c$. Thus, for every sufficiently generic filter~$\F$ containing~$c$ and every~$x \in \omega$, there will be a condition~$d_x \in \F$ forcing~$\neg \psi_e(G, x)$.

\begin{definition}
    Let $c := (\vec{R},\sigma,X) \in \PP_\P$. Consider $(\exists x) \psi_e(G,x)$ a $\Sigma_{1}^0$ formula. We define the $\Vdash$ relation as follows:
    \begin{itemize}
        \item $c \Vdash (\exists x) \psi_e(G,x)$ if $(\exists x) \psi_e(\sigma,x)$.
         \item $ c \Vdash (\forall x) \neg \psi_e(G,x)$ if $(\forall \tau \subseteq X)(\forall x)( \tau$ is $\vec{R}$-transitive $ \implies \neg \psi_e(\sigma \cup \tau,x))$.
    \end{itemize}

    \medskip
    Then, inductively, for $n \geq 1$, let $(\exists x) \psi_e(G,x)$ be a $\Sigma_{n+1}^0$ formula. Then,
    \begin{itemize}
        \item $ c \Vdash (\exists x) \psi_e(G,x)$ if $(\exists x)(c \Vdash \psi_e(G,x))$.
        \item  $ c \Vdash (\forall x) \neg \psi_e(G,x)$ if $(\forall \tau \subseteq X)(\forall x)(\forall \ell \geq |\vec{R}| )(\tau$ is $\vec{R}$-transitive$ \implies \sigma \cup \tau \qvdash_\ell \neg \psi_e(G,x))$.
    \end{itemize}
    
\end{definition}

\begin{remark}\label[remark]{rem:monotonicity-sigma01}
Every $\Sigma^0_1(G)$ formula $\psi(G)$ can be expressed without loss of generality of the form $\Phi^G_e(0)\downarrow$. By the use property, the notion of Turing functional can be extended to finite length oracles, which induces an extension of the formula $\psi(G)$ to finite strings, such that $\psi(G)$ holds iff $\psi(G \uh_k)$ holds for some~$k \in \omega$. Moreover, the formula $\psi(\sigma)$ can be chosen so that if $\psi(\sigma)$ holds, then so does $\psi(\tau)$ for every~$\tau \succeq \sigma$.
Throughout this article, we will always assume that $\Sigma^0_1$ formulas are in this normal form.
\end{remark}

The following lemma shows that the forcing relation is stable under condition extension.

\begin{lemma}\label[lemma]{lem:forcing-relation-closure}
    Fix $n \geq 0$. Let $d, c \in \PP_\P$ be such that $d \leq c$, and let $(\exists x) \psi_e(G,x)$ be a $\Sigma_{n+1}^0$ formula. 
    \begin{itemize}
        \item[(1)] If $c \Vdash (\exists x) \psi_e(G,x)$ then $d \Vdash (\exists x) \psi_e(G,x)$.
        \item[(2)] If $c \Vdash (\forall x) \neg \psi_e(G,x)$ then $d \Vdash (\forall x) \neg \psi_e(G,x)$. 
    \end{itemize}
\end{lemma}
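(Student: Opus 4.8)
The plan is to prove \Cref{lem:forcing-relation-closure} by induction on $n$, handling the $\Sigma^0_{n+1}$ and $\Pi^0_{n+1}$ cases together at each level, since the $\Pi^0_{n+1}$ forcing relation refers to the $\qvdash_\ell$ question, whose behaviour under extension must be tracked as well. First I would unwind the definition of the extension relation: if $d = (\vec{S}, \tau, Y) \leq c = (\vec{R}, \sigma, X)$, then $\sigma \preceq \tau$, $Y \subseteq X$, $\tau \setminus \sigma \subseteq X$, $Y \subseteq X$, and $\vec{R} \subseteq \vec{S}$. In particular $\tau \setminus \sigma$ is a finite $\vec{R}$-transitive subset of $X$ (it is even $\vec{S}$-transitive, hence $\vec{R}$-transitive, by condition~(4) applied to $d$), and every $\vec{S}$-transitive subset of $Y$ is an $\vec{R}$-transitive subset of $X$ after prepending $\tau \setminus \sigma$.

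For the base case $n = 0$: if $c \Vdash (\exists x)\psi_e(G,x)$ then $(\exists x)\psi_e(\sigma,x)$ holds, and by the monotonicity of $\Sigma^0_1$ formulas in normal form (\Cref{rem:monotonicity-sigma01}) and $\sigma \preceq \tau$, also $(\exists x)\psi_e(\tau,x)$ holds, so $d \Vdash (\exists x)\psi_e(G,x)$. For the $\Pi^0_1$ case, suppose $c \Vdash (\forall x)\neg\psi_e(G,x)$, i.e.\ for every finite $\vec{R}$-transitive $\tau' \subseteq X$ and every $x$, $\neg\psi_e(\sigma \cup \tau', x)$. Given a finite $\vec{S}$-transitive $\rho \subseteq Y$, the set $(\tau \setminus \sigma) \cup \rho$ is a finite $\vec{R}$-transitive subset of $X$ (using $\vec{R} \subseteq \vec{S}$ and the interval structure to rule out $3$-cycles, exactly as in the case analysis of \Cref{lem:extension}), and $\tau \cup \rho = \sigma \cup ((\tau \setminus \sigma) \cup \rho)$, so $\neg\psi_e(\tau \cup \rho, x)$ holds; hence $d \Vdash (\forall x)\neg\psi_e(G,x)$.

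For the inductive step $n \geq 1$: the $\Sigma^0_{n+1}$ case reduces to the inductive hypothesis, since $c \Vdash (\exists x)\psi_e(G,x)$ means $c \Vdash \psi_e(G,a)$ for some $a$, where $\psi_e(G,a)$ is (up to shape) a $\Sigma^0_n$ or $\Pi^0_n$ formula, so by induction $d \Vdash \psi_e(G,a)$, giving $d \Vdash (\exists x)\psi_e(G,x)$. The $\Pi^0_{n+1}$ case is the main obstacle: $c \Vdash (\forall x)\neg\psi_e(G,x)$ asserts that for all finite $\vec{R}$-transitive $\tau' \subseteq X$, all $x$, all $\ell \geq |\vec{R}|$, we have $\sigma \cup \tau' \qvdash_\ell \neg\psi_e(G,x)$; we must deduce that for all finite $\vec{S}$-transitive $\rho \subseteq Y$, all $x$, all $\ell' \geq |\vec{S}|$, $\tau \cup \rho \qvdash_{\ell'} \neg\psi_e(G,x)$. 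Given such $\rho$, set $\tau' = (\tau \setminus \sigma) \cup \rho$, which is $\vec{R}$-transitive and contained in $X$; then $\sigma \cup \tau' = \tau \cup \rho$, and since $\ell' \geq |\vec{S}| \geq |\vec{R}|$ the hypothesis gives $\tau \cup \rho \qvdash_{\ell'} \neg\psi_e(G,x)$ directly — so in fact the reservoir $Y$ is never used here, only the string part and the tournament list, and the implication is immediate once the combinatorial fact that $(\tau\setminus\sigma)\cup\rho$ is $\vec R$-transitive and a subset of $X$ is in place. The only subtlety worth spelling out is this last combinatorial fact, which follows from condition~(5) (the interval structure of $X$ with respect to $\sigma$) together with $\vec{R} \subseteq \vec{S}$ and the $\vec{S}$-transitivity of $\tau$ and of $\rho$, by the same four-case analysis as in the proof of \Cref{lem:extension}. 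I would therefore isolate that fact as the first paragraph of the proof and then let both the base case and the inductive step invoke it.
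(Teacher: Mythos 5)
Your proof follows the same structure as the paper's: monotonicity for $\Sigma^0_1$, the combinatorial fact for $\Pi^0_1$, induction for $\Sigma^0_{n+1}$, and the same combinatorial fact plus the inequality $\ell' \geq |\vec{S}| \geq |\vec{R}|$ for $\Pi^0_{n+1}$. The paper's proof is in fact terse at exactly the point you single out as the only subtlety, so isolating and justifying the claim that $(\tau\setminus\sigma)\cup\rho$ is an $\vec{R}$-transitive subset of $X$ is a reasonable way to organize the argument.

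There is one misattribution worth correcting. You cite condition~(5) applied to $c$ --- that $X$ sits in a minimal $\vec{R}$-interval of $\sigma$ --- as the source of the combinatorial fact. But having $\tau\setminus\sigma$ and $\rho$ both in the same minimal interval of $\sigma$ says nothing about how elements of the two pieces relate to one another, so a $3$-cycle with one vertex in $\tau\setminus\sigma$ and two in $\rho$ (or vice versa) cannot be ruled out this way. The relevant structure comes from $d$, not $c$: condition~(4) of $d$ (every $y \in Y$ extends $\tau$ to an $\vec{S}$-transitive set, hence a mixed triple with two vertices in $\tau$ is handled), and condition~(5) of $d$ (the reservoir $Y$ lies in a minimal $\vec{S}$-interval of $\tau$, so any $a \in \tau$ relates to all of $\rho \subseteq Y$ uniformly, handling the case of one vertex in $\tau\setminus\sigma$ and two in $\rho$). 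Equivalently, and more economically --- this is in effect what the paper does --- observe directly that since $d$ is a condition, $\tau \cup \rho$ is $\vec{S}$-transitive for any $\vec{S}$-transitive $\rho \subseteq Y$ (the four-case argument with $d$'s reservoir and interval), and then pass to the subset $(\tau\setminus\sigma)\cup\rho$ and restrict from $\vec{S}$ to $\vec{R}$. With this correction the rest of your argument, including the observation that the reservoir $Y$ enters only through $Y \subseteq X$, goes through unchanged.
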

\begin{proof}
    Let $c := (\vec{R},\sigma, X)$ and $d := (\vec{R}\vec{S}, \sigma \cup \tau, Y)$. 
    Suppose $n = 0$.
    \begin{itemize}
        \item If  $c \Vdash (\exists x) \psi_e(G,x)$, then $(\exists x) \psi_e(\sigma,x)$.
        Moreover, $\sigma \cup \tau \succeq \sigma$, so by monotonicity of $\psi_e$ (see \Cref{rem:monotonicity-sigma01}), $(\exists x) \psi_e(\sigma \cup \tau,x)$, hence $d \Vdash (\exists x) \psi_e(G,x)$.
        

        \item  If $c \Vdash (\forall x) \neg \psi_e(G,x)$, then $(\forall \rho \subseteq  X)(\forall x)( \rho$ is $\vec{R}$-transitive $ \implies \neg \psi_e(\sigma \cup \rho,x))$. Then, pick $x \in \omega$ and $\rho \subseteq Y$. Then, $\tau \cup \rho \subseteq \tau \cup Y \subseteq X$. Moreover, since $d$ is a condition, if $\rho$ is $\vec{R}\vec{S}$-transitive, $\sigma \cup \tau \cup \rho$ is $\vec{R}\vec{S}$-transitive, and in particular, $\tau \cup \rho$ is $\vec{R}$-transitive, hence $\neg \psi_e(\sigma \cup \tau \cup \rho, x)$. This yields that $(\forall \rho \subseteq  Y)(\forall x)( \rho$ is $\vec{R}\vec{S}$-transitive$ \implies \neg \psi_e(\sigma \cup \tau \cup \rho,x))$, i.e.,  $d \Vdash (\forall x) \neg \psi_e(G,x)$. 
    \end{itemize}
    Inductively, suppose $n > 0$.
    \begin{itemize}
        \item If  $c \Vdash (\exists x) \psi_e(G,x)$, then $(\exists x) c\Vdash \psi_e(\sigma,x)$, hence, by induction hypothesis, $(\exists x) d  \Vdash \psi_e(\sigma,x)$, i.e., $d \Vdash (\exists x) \psi_e(G,x)$.

        \item  If $c \Vdash (\forall x) \neg \psi_e(G,x)$, then $(\forall \rho \subseteq X)(\forall x)(\forall \ell \geq |\vec{R}| )(\rho$ is $\vec{R}$-transitive$ \implies \sigma  \cup \rho \qvdash_\ell \neg \psi_e(G,x))$. Then, pick $x \in \omega$, $\rho \subseteq Y$ and $\ell \geq |\vec{R}\vec{S}|$. Then, $\tau \cup \rho \subseteq \tau \cup Y \subseteq X$. Moreover, since $d$ is a condition, if $\rho$ is $\vec{R}\vec{S}$-transitive, $\sigma \cup \tau \cup \rho$ is $\vec{R}\vec{S}$-transitive, and in particular, $\tau \cup \rho$ is $\vec{R}$-transitive, thus, since $\ell \geq |\vec{R}|$, $\sigma \cup \rho \qvdash_\ell \neg \psi_e(G, x)$ holds. This yields that $(\forall \rho \subseteq  Y)(\forall x)(\forall \ell \geq \vec{R}\vec{S})( \rho$ is $\vec{R}\vec{S}$-transitive $ \implies \sigma  \cup \tau \cup \rho \qvdash_\ell \neg \psi_e(G,x))$, i.e.,  $d \Vdash (\forall x) \neg \psi_e(G,x)$. 
    \end{itemize}
    
\end{proof}

We now prove the core lemma for this notion of forcing: the forcing question meets its specifications. It implies in particular the density of the set of conditions forcing a property or its complement. Until now, the only hypothesis on the class~$\P$ was its partition regularity. Here, since we over-approximate the reservoir~$X$ by a class~$\L$ such that $\L \cap \UCNMN$ is large, one needs to assert some compatibility between~$\P$ and~$\L$ to deduce that~$X \in \L$. Since~$\L$ will be an intersection of~$\Sigma^0_1(\M_n)$ classes, assuming $\M_n$-minimality of~$\P$, that is, $\P \subseteq \langle \UCNMN \rangle$, we will have $X \in \P \subseteq \L$.

\begin{lemma}\label[lemma]{lem:question-below-validity}
     Let $n \in \omega$ and $c := (\vec{R},\sigma,X) \in \PP_\P$ such that $\P \subseteq \langle \UCNMN \rangle$. Consider $(\exists x) \psi_e(G,x)$ a $\Sigma_{n+1}^0$ formula. Let $m \geq |\vec{R}|$.
     \begin{itemize}
         \item If $\sigma \qvdash_m (\exists x) \psi_e(G,x)$ then $\exists d \leq c$ such that $d \Vdash (\exists x) \psi_e(G,x)$.
         \item If $\sigma \nqvdash_m (\exists x) \psi_e(G,x)$ then $\exists d \leq c$ such that $d \Vdash (\forall x) \neg \psi_e(G,x)$.
     \end{itemize}
\end{lemma}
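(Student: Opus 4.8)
The plan is to prove both items simultaneously by induction on $n$, following the architecture of \Cref{lem:questionf}: in the positive case one turns the over-approximation (quantifying over all tournaments, and over a large class of reservoirs) back into a genuine extension of $c$ using the combinatorics of \Cref{lem:extension}; in the negative case one shrinks the reservoir inside $X$ to a set on which no $\Sigma^0_{n+1}$ witness can ever be forced true. Write $\L$ for the class occurring in the definition of $\sigma \qvdash_m (\exists x)\psi_e(G,x)$, so that the forcing question holds iff $\L \cap \UCNMN$ is large. As in the proof of \Cref{lem:question-below-complexity}, $\L$ is (after the compactness reformulation) an upward-closed $\Sigma^0_1(\M_n)$ class, hence $\L = \U^{X_i}_e$ for some $X_i \in \M_n$, $e \in \omega$. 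I also record two facts: $\P$ is partition regular and non-trivial, since $\P \subseteq \langle\UCNMN\rangle \subseteq \UCNMN$ and $\UCNMN \subseteq \L_\omega$ by $\M_n$-cohesiveness (applied to the element $\emptyset \in \M_n$); and $\P \subseteq \langle \U^{\M_{n-1}}_{C_{n-1}}\rangle$ as well, since $\langle\UCNMN\rangle \subseteq \UCNMN \subseteq \langle\U^{\M_{n-1}}_{C_{n-1}}\rangle$ by \Cref{cn-seq}, so the induction hypothesis at level $n-1$ applies to the same $\P$.

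\emph{Positive case.} Assume $\UCNMN \cap \L$ is large. Because $\UCNMN$ is $\M_n$-cohesive, $\langle\UCNMN\rangle$ is the intersection of all $\U^{X_j}_{e'}$ with $\UCNMN \cap \U^{X_j}_{e'}$ large, so $\L = \U^{X_i}_e$ occurs in this intersection and $\langle\UCNMN\rangle \subseteq \L$; hence $X \in \P \subseteq \L$. Now mimic the general-case argument of \Cref{lem:questionf}(1): from $X \in \L$ and compactness get a threshold $t$ such that for every $m$-tuple $\vec{h}$ of $2$-colorings of $\{0,\dots,t\}$ and every $m$-tuple $\vec{S}$ of tournaments over $\{0,\dots,t\}$ compatible with $(\sigma,X)$, there is an $\vec{S}$-transitive, $\vec{h}$-homogeneous $\tau \subseteq \{0,\dots,t\} \cap X \setminus \{0,\dots,|\sigma|\}$ together with an $x$ witnessing $\psi_e(\sigma\cup\tau,x)$ (for $n=0$), respectively some $\ell \geq m$ with $\sigma\cup\tau \nqvdash_\ell \neg\psi_e(G,x)$ (for $n>0$). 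Pad $\vec{R}$ to an $m$-tuple $\vec{R}'$ by linear-order tournaments; for $y \in X \setminus \{0,\dots,t\}$ let $\vec{g}_y$ record the behaviour of each $R'_i$ on $\{0,\dots,t\}$ with respect to $y$. This is a finite coloring of $X \setminus \{0,\dots,t\}$, which lies in $\P$ by non-triviality, so by partition regularity some block $Y \in \P$, $Y \subseteq X \setminus \{0,\dots,t\}$, is monochromatic of color $\vec{g}$. Applying the threshold property with $\vec{h} = \vec{g}$, $\vec{S} = \vec{R}'$ yields $\tau$ which is $\vec{R}$-transitive and satisfies $\tau \to_R Y$ or $Y \to_R \tau$ for every $R \in \vec{R} \subseteq \vec{R}'$ (by monochromaticity of $Y$). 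By \Cref{lem:extension} and $Y \in \P$, $d' := (\vec{R}, \sigma\cup\tau, Y)$ is a condition with $d' \leq c$ (removing from $Y$ the finitely many elements below $|\sigma\cup\tau|$, again by partition regularity). If $n = 0$, $\psi_e(\sigma\cup\tau,x)$ holds and $d'$ already forces $(\exists x)\psi_e(G,x)$. If $n > 0$, we have $\sigma\cup\tau \nqvdash_\ell \neg\psi_e(G,x)$ with $\ell \geq |\vec{R}|$, so the induction hypothesis (second item, for the $\Sigma^0_n$ formula $\neg\psi_e(G,x)$ and the condition $d'$) gives $d \leq d'$ with $d \Vdash \psi_e(G,x)$, hence $d \Vdash (\exists x)\psi_e(G,x)$.

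\emph{Negative case.} Assume $\L \cap \UCNMN$ is not large. The crucial step is to produce $Y \in \P$ with $Y \subseteq X$ and $Y \notin \L$. Set $\P' := \{Z : X \cap Z \in \P\}$; a direct check from partition regularity of $\P$ shows $\P'$ is partition regular (hence large), and $Z \in \P' \Rightarrow X \cap Z \in \P \subseteq \UCNMN \Rightarrow Z \in \UCNMN$ by upward closure, so $\P' \subseteq \UCNMN$. If no such $Y$ existed, then for every $Z \in \P'$ we would have $X \cap Z \in \P$, $X \cap Z \subseteq X$, hence $X \cap Z \in \L$ and so $Z \in \L$; thus $\P' \subseteq \L \cap \UCNMN$ would be large, contradicting the hypothesis. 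Fix such $Y$. Unfolding $Y \notin \L$ yields an $m$-tuple $\vec{h}$ of colorings of $Y$ and an $m$-tuple $\vec{S} \in \C_m(\sigma,Y)$ such that every $\vec{S}$-transitive, $\vec{h}$-homogeneous $\tau \subseteq Y \setminus \{0,\dots,|\sigma|\}$ satisfies $\neg\psi_e(\sigma\cup\tau,x)$ for all $x$ (for $n=0$), respectively $\sigma\cup\tau \qvdash_\ell \neg\psi_e(G,x)$ for all $x$ and all $\ell \geq m$ (for $n>0$). Split $Y$ by the finite coloring $y \mapsto (h_0(y),\dots,h_{m-1}(y))$ and extract, by partition regularity, a block $Y^* \in \P$, $Y^* \subseteq Y$, on which every $h_i$ is constant, so that every $\tau \subseteq Y^*$ is $\vec{h}$-homogeneous. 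Then $d := (\vec{R}\vec{S}, \sigma, Y^*)$ is a valid $\PP_\P$-condition with $d \leq c$ (since $Y^* \subseteq X$ gives $\vec{R} \in \C_{|\vec{R}|}(\sigma, Y^*)$, and $\vec{S} \in \C_m(\sigma,Y) \subseteq \C_m(\sigma,Y^*)$). Every $\vec{R}\vec{S}$-transitive $\rho \subseteq Y^*$ is in particular $\vec{S}$-transitive, $\vec{h}$-homogeneous and contained in $Y \setminus \{0,\dots,|\sigma|\}$, so it falls under the defining clause above; this is exactly the requirement for $d \Vdash (\forall x)\neg\psi_e(G,x)$ (using $\ell \geq |\vec{R}\vec{S}| \geq m$ when $n > 0$).

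\emph{Main obstacle.} The delicate point is the crucial step of the negative case, which converts a ``global'' failure of largeness of $\L \cap \UCNMN$ --- a statement about finite covers of $\omega$ --- into a single sub-reservoir of $X$ lying in $\P$ and avoiding $\L$. The auxiliary partition-regular class $\P' = \{Z : X \cap Z \in \P\}$, together with the observation that it stays inside $\UCNMN$, is what makes this work, and it is the one genuinely non-routine move. A secondary subtlety, present in both cases, is that the forcing question commits the extension to $\vec{h}$-homogeneity: in the negative case this costs one further application of partition regularity (passing to $Y^*$), and in the positive case it must be secured together with the limit-behaviour pigeonhole on $X$, exactly as in \Cref{lem:questionf}. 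Everything else --- padding $\vec{R}$ to length $m$, the compactness reformulation of $\L$, threading the correct instance of the induction hypothesis, and the finite cleanup of reservoirs --- is routine.
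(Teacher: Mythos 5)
Your proof is correct. The positive case follows the paper's argument essentially verbatim: compactness gives a threshold, the limit-behaviour pigeonhole on $X$ (via partition regularity of $\P$) yields a reservoir $Y$, the colorings $\vec h$ are read off from the homogeneous color, and \Cref{lem:extension} produces the extension; for $n>0$ one appeals to the induction hypothesis at level $n-1$, exactly as the paper does, after noting $\P \subseteq \langle \U^{\M_{n-1}}_{C_{n-1}}\rangle$ via \Cref{cn-seq}.

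The negative case is where you diverge from the paper, and it is worth pointing out. The paper directly unfolds \qt{$\L \cap \UCNMN$ is not large} into a finite cover $Y_0,\dots,Y_s$ of $\omega$ with no piece in $\L \cap \UCNMN$, then uses partition regularity of $\P$ to find $i$ with $X \cap Y_i \in \P$, deduces $Y_i \in \UCNMN$ by upward closure, hence $Y_i \notin \L$, and extracts the witnesses from $Y_i$. You instead introduce the auxiliary class $\P' = \{Z : X \cap Z \in \P\}$, verify that it is partition regular and contained in $\UCNMN$, and argue by contradiction: if every $Y \in \P$ below $X$ were in $\L$, then $\P'$ would be a partition regular subclass of $\L \cap \UCNMN$, contradicting non-largeness. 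This is a legitimate and slightly more abstract route; it avoids any explicit manipulation of the cover and packages the step \qt{find a good piece $Y_i$} into a single invocation of the characterization of large classes as those containing a partition regular subclass. Both proofs then homogenize by the witness colorings $\vec h$ to land in $\P$ and build $d := (\vec R\vec S, \sigma, \cdot)$. One small presentational note: in your positive case you conflate the coloring $\vec g$ of $X\setminus\{0,\dots,t\}$ with the homogeneous color (also called $\vec g$) when you write \qt{with $\vec h = \vec g$}; what is meant is that $\vec h$ is the $2$-coloring of $\{0,\dots,t\}$ read off from that homogeneous color, as the paper spells out. This is a notational hiccup, not a gap.
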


\begin{proof}
    First, suppose $n =0$.
    \begin{itemize}
        \item Suppose $\sigma \qvdash_m (\exists x) \psi_e(G,x)$. Let $\L$ denote the class of all $Y$ such that
    for every~$m$-tuple of 2-colorings $\vec{h} = h_0, \dots, h_{m-1} \in 2^Y$ and every $\vec{S} \in \C_{m}(\sigma,Y)$, there is a finite $\tau \subseteq Y \setminus \{0, \dots, |\sigma|\}$ which is $\vec{S}$-transitive and $\vec{h}$-homogeneous, and some~$x \in \omega$ such that $ \psi_e(\sigma \cup \tau,x)$. Then, $\L \cap \U_{C_0}^{\M_0}$ is large.
    Since $\U_{C_0}^{\M_0}$ is $\M_0$-cohesive, $\langle \U_{C_0}^{\M_0} \rangle$ is $\M_0$-minimal. This yields that $\langle \U_{C_0}^{\M_0} \rangle \subseteq \L$. Moreover, $X \in \P  \subseteq \langle \U_{C_0}^{\M_0} \rangle \subseteq \L$. By a compactness argument, this yields that there exists $t \in \omega$ such that for every 2-colorings $\vec{h} = h_0, \dots, h_{m-1} \in 2^t$, there is a finite $\tau \subseteq X \cap \{0, \dots, t \}$ which is $\vec{R}$-transitive and $\vec{h}$-homogeneous and some $x \in \omega$ such that  $ \psi_e(\sigma \cup \tau,x)$. Let us build a specific $\vec{h}$ such that the $\tau$ we get gives us the extension of $c$ we look for. For every $i < m$, $a \leq t$, and $y \in X$, $y > t$, let $g_{i,a}(y) := 1$ if $aR_iy$, and 0 otherwise (if $m > i \geq |\vec{R}|$, then let~$R_i$ be a fixed dummy tournament). Since~$\P$ is partition regular, then there is some $\vec{g}$-homogeneous set~$H \subseteq X$ in~$\P$.  
    For every~$i < m$ and $a \leq t$, let $h_i(a) = 1$ if $\{a\} \to_{R_i} H$, and $0$ otherwise. Since~$X \in \L$, there exists a finite $\tau \subseteq X \cap \{0,\dots,t\}$ which is $\vec{R}$-transitive and $\vec{h}$-homogeneous and some $x \in \omega$ such that $\psi_e(\sigma\cup \tau,x)$ holds. Moreover, by \Cref{lem:extension}, $\vec{R} \in \C_{|\vec{R}|}(\sigma \cup \tau, H )$. This makes $d := (\vec{R},\sigma \cup \tau, H)$ a valid condition such that $d \Vdash (\exists x) \psi_e(G,x)$. 

    \item Suppose $\sigma \nqvdash_m (\exists x) \psi_e(G,x)$. Then, there exists $s \in \omega$ and $Y_0, \dots Y_s$ a partition of $\omega$ such that for all $i \leq s$, $(\dagger)$ either $Y_i \not \in \U_{C_0}^{\M_0}$ or there exists an $m$-tuple of 2-colorings $\vec{h} = h_0, \dots, h_{m-1} \in 2^{Y_i}$ and an $\vec{S} \in \C_{m}(\sigma,Y_i)$, such that for all $\tau \subseteq Y_i \setminus \{0, \dots, |\sigma|\}$ which is $\vec{S}$-transitive and $\vec{h}$-homogeneous, and for all~$x \in \omega$,  $ \neg \psi_e(\sigma \cup \tau,x)$. 

    Since~$\P$ is partition regular, then there is some~$i \leq s$ such that $X \cap Y_i \in \P$. In particular, $X \cap Y_i \in \U_{C_0}^{\M_0}$, so by upward-closure of partition regularity, $Y_i \in \U_{C_0}^{\M_0}$. Let~$\vec{h}$ and $\vec{S} \in \C_m(\sigma, Y)$ be witnesses of $(\dagger)$. By partition regularity of~$\P$, there is a $\vec{h}$-homogeneous subset~$H \subseteq X \cap Y_i$ in~$\P$. The condition $d := (\vec{R}\vec{S}, \sigma, H)$ is a valid extension of~$c$ such that $d \Vdash (\forall x) \neg \psi_e(G,x)$.
    \end{itemize}

    Now, inductively, suppose $n > 0$.
    \begin{itemize}
    
    \item Suppose $\sigma \qvdash_m (\exists x) \psi_e(G,x)$. Let $\L$ denote the class of all $Y$ such that
    for every~$m$-tuple of 2-colorings $\vec{h} = h_0, \dots, h_{m-1} \in 2^Y$ and every $\vec{S} \in \C_{m}(\sigma,Y)$, there is a finite $\tau \subseteq Y \setminus \{0, \dots, |\sigma|\}$ which is $\vec{S}$-transitive and $\vec{h}$-homogeneous, and some~$x, \ell \in \omega$ such that $\sigma \cup \tau \nqvdash_\ell\neg \psi_e(G, x)$. Then, $\L \cap \U_{C_n}^{\M_n}$ is large.
    Since $\U_{C_n}^{\M_n}$ is $\M_n$-cohesive, $\langle \U_{C_n}^{\M_n} \rangle$ is $\M_n$-minimal. This yields that $\langle \U_{C_n}^{\M_n} \rangle \subseteq \L$. Moreover, $X \in \P \subseteq \langle \UCNMN \rangle \subseteq \L$. By a compactness argument, this yields that there exists $t \in \omega$ such that for every $m$-tuple of 2-colorings $\vec{h} = h_0, \dots, h_{m-1} \in 2^t$,  there is a finite $\tau \subseteq X \cap \{0, \dots, t \}$ which is $\vec{R}$-transitive and $\vec{h}$-homogeneous and some $x, \ell \in \omega$ such that $\sigma \cup \tau \nqvdash_\ell \neg \psi_e(G, x)$. Let us build a specific $\vec{h}$ such that the $\tau$ we get gives us the extension of $c$ we look for. 
    
    For every $i < m$, $a \leq t$, and $y \in X$, $y > t$, let $g_{i,a}(y) := 1$ if $aR_iy$, and 0 otherwise. 
    
    By partition regularity of~$\P$, there is some $\vec{g}$-homogeneous set~$H \subseteq X$ in~$\P$.  
    
    For every~$i < m$ and $a \leq t$, let $h_i(a) = 1$ if $\{a\} \to_{R_i} H$, and $0$ otherwise. Now, there exists a finite $\tau \subseteq X \cap \{0,\dots,t\}$ which is $\vec{R}$-transitive and $\vec{h}$-homogeneous and some $x, \ell \in \omega$ such that $\sigma \cup \tau \nqvdash_\ell\neg \psi_e(G, x)$. Moreover, by \Cref{lem:extension}, $\vec{R} \in \C_m(\sigma \cup \tau, H) $. This makes $d := (\vec{R},\sigma \cup \tau, H)$ a valid condition such that for some~$\ell \geq m$, $\sigma \cup \tau \nqvdash_\ell\neg \psi_e(G, x)$, hence, by induction hypothesis, there exists $p \leq d \leq c$ such that $p \Vdash \psi_e(G, x)$, hence, $p \Vdash (\exists x) \psi_e(G,x)$.

    \item Suppose $\sigma \nqvdash_m (\exists x) \psi_e(G,x)$. Then, there exists $s \in \omega$ and $Y_0, \dots Y_s$ a partition of $\omega$ such that for all $i \leq s$, $(\dagger)$ either $Y_i \not \in \U_{C_n}^{\M_n}$ or there exists an $m$-tuple of 2-colorings $\vec{h} = h_0, \dots, h_{m-1} \in 2^{Y_i}$ and an $\vec{S} \in \C_{m}(\sigma,Y_i)$, such that for all $\tau \subseteq Y_i \setminus \{0, \dots, |\sigma|\}$ which is $\vec{S}$-transitive and $\vec{h}$-homogeneous, and for all~$x \in \omega$ and $\ell \geq m$, $\sigma \cup \tau \qvdash_\ell \neg \psi_e(G, x)$. 

    By partition regularity of~$\P$, there is some~$i \leq s$ such that $X \cap Y_i \in \P$. In particular, $X \cap Y_i \in \UCNMN$, so by upward-closure of partition regularity, $Y_i \in \UCNMN$. Let~$\vec{h}$ and $\vec{S} \in \C_m(\sigma, Y)$ be witnesses of $(\dagger)$. By partition regularity of~$\P$, there is a $\vec{h}$-homogeneous subset~$H \subseteq X \cap Y_i$ in~$\P$. 
    The condition $d := (\vec{RS}, \sigma , H)$ is a valid extension of~$c$ such that $d \Vdash (\forall x) \neg \psi_e(G,x)$.
    \end{itemize}
\end{proof}

\begin{definition}
    Let $n \in \omega$, and $\F \subseteq \PP_\P$ be a filter. The set $\F$ is said \emph{$n$-generic} if for all $k < n$, and every $\Sigma_{k+1}^0$ formula $(\exists x)\psi_e(G,x)$, there exists a condition $c \in \F$ such that $c \Vdash (\exists x)\psi_e(G,x)$ or $c \Vdash (\forall x)\neg \psi_e(G,x)$
\end{definition}

\begin{lemma}\label[lemma]{lem:n-genericity}
    Fix $n \in \omega$, and let $\F$ be a sufficiently generic $\PP_\P$-filter, where $\P \subseteq \langle \U^{\M_n}_{\C_n} \rangle$. Then $\F$ is $n$-generic.
\end{lemma}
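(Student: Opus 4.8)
The plan is to proceed by induction on $k < n$, showing that for each $k$ the set of conditions deciding a given $\Sigma^0_{k+1}$ formula is dense below any condition, so that a sufficiently generic filter must meet it. The key input is \Cref{lem:question-below-validity}, which requires $\P \subseteq \langle \U^{\M_k}_{C_k} \rangle$; this is where the hypothesis $\P \subseteq \langle \U^{\M_n}_{C_n}\rangle$ enters, together with \Cref{cn-seq}, which gives the nesting $\U^{\M_{k+1}}_{C_{k+1}} \subseteq \langle \U^{\M_k}_{C_k}\rangle$, hence $\langle \U^{\M_n}_{C_n}\rangle \subseteq \langle \U^{\M_{n-1}}_{C_{n-1}}\rangle \subseteq \dots \subseteq \langle \U^{\M_0}_{C_0}\rangle$, so that $\P$ is in fact contained in $\langle \U^{\M_k}_{C_k}\rangle$ for every $k < n$ (and indeed every $k \le n$). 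So the validity lemma is available at every level below $n$.

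First I would fix $k < n$ and a $\Sigma^0_{k+1}$ formula $(\exists x)\psi_e(G,x)$, and define $D_{e} \subseteq \PP_\P$ to be the set of conditions $c$ such that $c \Vdash (\exists x)\psi_e(G,x)$ or $c \Vdash (\forall x)\neg\psi_e(G,x)$. Given an arbitrary condition $c = (\vec{R},\sigma,X) \in \PP_\P$, let $m = |\vec{R}|$ and consider whether $\sigma \qvdash_m (\exists x)\psi_e(G,x)$ holds. By \Cref{lem:question-below-validity} (applicable since $\P \subseteq \langle\U^{\M_k}_{C_k}\rangle$ as noted above, and the formula is $\Sigma^0_{k+1}$ with $k+1 \le n$), in the first case there is an extension $d \le c$ with $d \Vdash (\exists x)\psi_e(G,x)$, and in the second case there is an extension $d \le c$ with $d \Vdash (\forall x)\neg\psi_e(G,x)$. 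Either way $d \in D_{e}$, so $D_{e}$ is dense below $c$; since $c$ was arbitrary, $D_{e}$ is dense in $\PP_\P$. As there are only countably many pairs $(k,e)$ with $k < n$, a sufficiently generic filter $\F$ meets every $D_{e}$, which is exactly the statement that $\F$ is $n$-generic.

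The only subtlety — and the step I would be most careful about — is checking that the hypothesis $\P \subseteq \langle \U^{\M_n}_{C_n}\rangle$ really does propagate to $\P \subseteq \langle\U^{\M_k}_{C_k}\rangle$ for all $k < n$, i.e.\ that the chain of inclusions $\U^{\M_{j+1}}_{C_{j+1}} \subseteq \langle \U^{\M_j}_{C_j}\rangle$ supplied by \Cref{cn-seq} together with $\langle\U^{\M_j}_{C_j}\rangle \subseteq \U^{\M_j}_{C_j}$ composes correctly; this is a short monotonicity argument but it is the load-bearing observation, since without it \Cref{lem:question-below-validity} cannot be invoked at levels $k < n$. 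Everything else is the standard density/genericity bookkeeping. No new combinatorics of $\EM$ is needed here — all of that work has already been absorbed into \Cref{lem:question-below-validity}.
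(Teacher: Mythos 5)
Your proof follows the same approach as the paper's: you show that for each $k < n$ and each $\Sigma^0_{k+1}$ formula, the set of conditions deciding it is dense, using \Cref{lem:question-below-validity} via the forcing question. The one point where you go beyond the paper is in explicitly verifying that the hypothesis $\P \subseteq \langle\U^{\M_n}_{C_n}\rangle$ propagates downward to $\P \subseteq \langle\U^{\M_k}_{C_k}\rangle$ for every $k < n$ by chaining $\langle\U^{\M_{j+1}}_{C_{j+1}}\rangle \subseteq \U^{\M_{j+1}}_{C_{j+1}} \subseteq \langle\U^{\M_j}_{C_j}\rangle$ from \Cref{cn-seq}; the paper invokes \Cref{lem:question-below-validity} at level $k$ without comment, implicitly relying on exactly this inclusion. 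Your observation is correct and is indeed the load-bearing step that makes the argument legal at every level below $n$, so your write-up is, if anything, slightly more complete than the paper's.
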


\begin{proof}
    Let $k<n$, and let $(\exists x)\psi_e(G,x)$ be a $\Sigma_{k+1}$ formula. Let~$\D$ be the collection of all conditions deciding $(\exists x)\psi_e(G,x)$. We claim that~$\D$ is dense. Let $c := ( \vec{R},\sigma,X) \in \F$, and $m := |\vec{R}|$. Suppose $\sigma \qvdash_m (\exists x)(\psi_e(G,x)$. Then, by \Cref{lem:question-below-validity} there exists $d \leq c$ such that $d \Vdash (\exists x)(\psi_e(G,x)$. Otherwise, $\sigma \nqvdash_m (\exists x)(\psi_e(G,x)$. Then, by \Cref{lem:question-below-validity} there exists $d \leq c$ such that $d \Vdash (\forall x) \neg (\psi_e(G,x)$.
    Either way, there is some~$d \leq c$ in~$\D$, so $\D$ is dense. Since $\F$ is sufficiently generic, $\F \cap \D \neq \emptyset$.
\end{proof}

\begin{lemma}\label[lemma]{lem:non-contradiction}
    Let $n \in \omega$, and let $c := (\vec{R},\sigma,X) \in \PP_\P$ such that if $n > 0$ then $\P \subseteq \langle \U_{C_{n-1}}^{\M_{n-1}} \rangle$. Consider $(\exists x) \psi_e(G,x)$ a $\Sigma_{n+1}^0$ formula. Then, $(c \Vdash (\exists x) \psi_e(G,x)) \land (c \Vdash (\forall x) \neg \psi_e(G,x))$ never holds.
\end{lemma}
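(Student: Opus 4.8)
The plan is to proceed by induction on $n$, mirroring the inductive structure of the definitions of $\qvdash$ and $\Vdash$, and to reduce the level-$(n{+}1)$ statement to the level-$n$ statement using the two lemmas just established. For the \emph{base case} $n=0$, with $\psi_e$ in the normal form of \Cref{rem:monotonicity-sigma01}: suppose for contradiction that both $c\Vdash(\exists x)\psi_e(G,x)$ and $c\Vdash(\forall x)\neg\psi_e(G,x)$. Unfolding the definitions, the first gives some $a$ with $\psi_e(\sigma,a)$, while the second gives $\neg\psi_e(\sigma\cup\tau,x)$ for every $x$ and every $\vec{R}$-transitive $\tau\subseteq X$. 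Instantiating $\tau=\emptyset$ (which is trivially $\vec{R}$-transitive and a subset of $X$) and $x=a$, and using $\sigma\cup\emptyset=\sigma$, yields $\neg\psi_e(\sigma,a)$, a contradiction.

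For the \emph{inductive step} $n\ge 1$: now $\psi_e(G,x)$ is $\Pi^0_n$, so fix a $\Sigma^0_n$ formula of the form $(\exists y)\theta(G,y)$ with $\psi_e(G,a)=(\forall y)\neg\theta(G,y)$, absorbing the parameter $a$ chosen below. Assume both $c\Vdash(\exists x)\psi_e(G,x)$ and $c\Vdash(\forall x)\neg\psi_e(G,x)$. From the first, pick $a$ with $c\Vdash\psi_e(G,a)$. From the second, instantiating $\tau=\emptyset$, $x=a$ and $\ell=|\vec{R}|$ gives $\sigma\qvdash_{|\vec{R}|}(\exists y)\theta(G,y)$. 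Since $n>0$, the hypothesis gives $\P\subseteq\langle\U_{C_{n-1}}^{\M_{n-1}}\rangle$, so \Cref{lem:question-below-validity}, applied to this $\Sigma^0_n$ formula with $|\vec{R}|$ in the role of $m$, yields an extension $d\le c$ with $d\Vdash(\exists y)\theta(G,y)$. On the other hand, $c\Vdash\psi_e(G,a)$ means $c\Vdash(\forall y)\neg\theta(G,y)$, so by \Cref{lem:forcing-relation-closure} we get $d\Vdash(\forall y)\neg\theta(G,y)$. Thus $d$ forces both the $\Sigma^0_n$ formula $(\exists y)\theta(G,y)$ and its negation. By the induction hypothesis applied to $d$ and $(\exists y)\theta(G,y)$ this is impossible, provided $d$ satisfies the required hypothesis: if $n-1=0$ there is nothing to check; and if $n-1\ge 1$, then by \Cref{cn-seq} we have $\langle\U_{C_{n-1}}^{\M_{n-1}}\rangle\subseteq\U_{C_{n-1}}^{\M_{n-1}}\subseteq\langle\U_{C_{n-2}}^{\M_{n-2}}\rangle$, so $\P\subseteq\langle\U_{C_{n-2}}^{\M_{n-2}}\rangle$, and since $d\in\PP_\P$ the hypothesis holds. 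This contradiction completes the induction.

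The argument is essentially routine once this skeleton is in place: all of the substantive work has already been carried out in \Cref{lem:question-below-validity} (the forcing question meeting its specifications) and \Cref{lem:forcing-relation-closure} (closure of $\Vdash$ under extension). The only bookkeeping requiring attention is the tracking of the minimality hypotheses on $\P$ — one must verify that $\P\subseteq\langle\U_{C_{n-1}}^{\M_{n-1}}\rangle$ propagates down to $\P\subseteq\langle\U_{C_{n-2}}^{\M_{n-2}}\rangle$, which is exactly where \Cref{cn-seq} enters — together with checking that the formula handed to \Cref{lem:question-below-validity} and \Cref{lem:forcing-relation-closure} sits at the correct level of the arithmetic hierarchy so that those lemmas apply. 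I do not anticipate any genuine difficulty here; this lemma is precisely the internal-consistency check that legitimizes treating $(\qvdash,\Vdash)$ as a bona fide forcing apparatus, and no new combinatorics of $\EM$ or of partition regularity is needed.
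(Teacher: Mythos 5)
Your proof is correct and takes essentially the same approach as the paper: induction on $n$, with the base case dispatched by instantiating $\tau=\emptyset$, and the inductive step combining \Cref{lem:question-below-validity} (applied to the $\Sigma^0_n$ formula $\neg\psi_e(G,a)$) with \Cref{lem:forcing-relation-closure} to produce a $d\le c$ that would violate the induction hypothesis. You are in fact slightly more careful than the paper in explicitly tracking how the minimality hypothesis $\P\subseteq\langle\U_{C_{n-1}}^{\M_{n-1}}\rangle$ propagates to $\P\subseteq\langle\U_{C_{n-2}}^{\M_{n-2}}\rangle$ via \Cref{cn-seq}, a step the paper leaves implicit.
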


\begin{proof}
    We prove this inductively. Suppose otherwise.
    \begin{itemize}
        \item First, suppose $n=0$. Then, $(\exists x)\psi_e(\sigma,x)$ holds, and $(\forall \tau \subseteq X)(\forall y)( \tau$ is $\vec{R}$-transitive $ \implies \neg \psi_e(\sigma \cup \tau,y))$. In particular, for $\tau = \emptyset$, $\tau$ is $\vec{R}$-transitive, hence, $\neg \psi_e(\sigma,x)$ holds, yielding a contradiction.
        \item Now, suppose $n>0$. Then, $(\exists x)(c \Vdash \psi_e(G,x))$, and $(\forall \tau \subseteq X)(\forall y)(\forall \ell \geq |\vec{R}| )(\tau$ is $\vec{R}$-transitive$ \implies \sigma \cup \tau \qvdash_\ell \neg \psi_e(G,y))$. In particular, for $\tau = \emptyset$, and $\ell = \vec{R}$, $\tau$ is $\vec{R}$-transitive, hence, $\sigma \qvdash_\ell \neg \psi_e(G,x))$. Since $\P \subseteq \langle \U_{C_{n-1}}^{\M_{n-1}} \rangle$, this yields by \Cref{lem:question-below-validity} that there exists $d \leq c$ such that $d \Vdash \neg \psi_e(G,x)$. However, by \Cref{lem:forcing-relation-closure}, $d \Vdash \psi_e(G,x)$. This contradicts induction hypothesis.
    \end{itemize}
\end{proof}

The following lemma is known as the \qt{forcing implies truth} lemma: if a condition forces a formula, then for every sufficiently generic filter containing this condition, the formula will hold.

\begin{lemma}\label[lemma]{lem:forcing-implies-truth}
    Let $n \in \omega$, and $\F \subseteq \PP_\P$ be a filter such that if $n > 0$ then $\F$ is $(n-1)$-generic and $\P \subseteq \langle \U_{C_{n-1}}^{\M_{n-1}} \rangle$. Let $(\exists x)\psi_e(G,x)$ be a $\Sigma_{n+1}^0$ formula. Let $c \in \F$. 
    \begin{itemize}
        \item If $c \Vdash (\exists x)\psi_e(G,x)$, then $(\exists x)\psi_e(G_\F,x)$ holds.
        \item If $c \Vdash (\forall x) \neg \psi_e(G,x)$, then $ (\forall x) \neg \psi_e(G_\F,x)$ holds.
    \end{itemize}
\end{lemma}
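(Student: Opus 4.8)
The plan is to prove \Cref{lem:forcing-implies-truth} by induction on $n$, following the standard ``forcing implies truth'' argument, with the two clauses (for $\Sigma^0_{n+1}$ and $\Pi^0_{n+1}$ formulas) handled together at each stage.

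\textbf{Base case $n = 0$.} Here the formulas are $\Sigma^0_1$ and $\Pi^0_1$, and the forcing relation is defined syntactically on the stem. If $c = (\vec R, \sigma, X) \Vdash (\exists x)\psi_e(G,x)$, then by definition $(\exists x)\psi_e(\sigma, x)$ holds; since $\sigma \prec G_\F$ (as $c \in \F$ and $G_\F = \bigcup \sigma$), monotonicity of $\Sigma^0_1$ formulas (\Cref{rem:monotonicity-sigma01}) gives $(\exists x)\psi_e(G_\F, x)$. If $c \Vdash (\forall x)\neg\psi_e(G,x)$, suppose toward a contradiction that $\psi_e(G_\F, x)$ holds for some $x$; by monotonicity this is witnessed by some finite initial segment $\sigma \cup \tau \prec G_\F$ with $\tau \subseteq X$ and $\tau$ being $\vec R$-transitive (since $G_\F$ is $\vec R$-transitive by \Cref{lem:gf-transitive}, and $G_\F \setminus \sigma \subseteq X$ by the filter structure). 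This contradicts the definition of $c \Vdash (\forall x)\neg\psi_e(G,x)$.

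\textbf{Inductive step $n > 0$.} For the $\Sigma^0_{n+1}$ clause: if $c \Vdash (\exists x)\psi_e(G,x)$, then by definition there is some $x$ with $c \Vdash \psi_e(G,x)$, where $\psi_e(G,x)$ is $\Pi^0_n$; applying the induction hypothesis (the filter $\F$ is $(n-1)$-generic, hence a fortiori $(n-2)$-generic, and $\P \subseteq \langle \U^{\M_{n-1}}_{C_{n-1}}\rangle \subseteq \langle \U^{\M_{n-2}}_{C_{n-2}}\rangle$ by \Cref{cn-seq}) yields $\psi_e(G_\F, x)$, hence $(\exists x)\psi_e(G_\F,x)$. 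For the $\Pi^0_{n+1}$ clause: suppose $c \Vdash (\forall x)\neg\psi_e(G,x)$ but $\psi_e(G_\F, x)$ holds for some $x$. Fix this $x$. By $(n-1)$-genericity of $\F$ there is a condition $d = (\vec R\vec S, \sigma\cup\tau, Y) \in \F$ deciding the $\Sigma^0_n$ formula $\psi_e(G,x)$ (after lowering its complexity by one); since $d$ extends $c$, \Cref{lem:forcing-relation-closure} gives $d \Vdash (\forall y \ge \dots)\dots$, so in particular, taking $\tau' = \emptyset$ and $\ell = |\vec R\vec S|$, we get $\sigma \cup \tau \qvdash_\ell \neg\psi_e(G,x)$. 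By property~(1) of the forcing question (\Cref{lem:question-below-validity}, using $\P \subseteq \langle \U^{\M_{n-1}}_{C_{n-1}}\rangle$), there is $d' \le d$ with $d' \Vdash \neg\psi_e(G,x)$, i.e.\ $d' \Vdash (\forall x')\neg\psi'(G,x')$ for the appropriate $\Pi^0_n$ rewriting. But $d$ was chosen to decide $\psi_e(G,x)$; if $d \Vdash \psi_e(G,x)$, then $d' \Vdash \psi_e(G,x)$ by \Cref{lem:forcing-relation-closure}, contradicting \Cref{lem:non-contradiction} applied to $d'$. Hence $d \Vdash \neg\psi_e(G,x)$, and then the induction hypothesis applied to $d \in \F$ gives $\neg\psi_e(G_\F, x)$, contradicting our assumption.

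\textbf{Main obstacle.} The delicate point is bookkeeping the complexity shifts and the genericity hypotheses: when we pass from a $\Sigma^0_{n+1}$ formula $(\exists x)\psi_e(G,x)$ to its $\Pi^0_n$ subformula $\psi_e(G,x)$, we must invoke the induction hypothesis with the correct generic level, and we must make sure the $\P \subseteq \langle \U^{\M_k}_{C_k}\rangle$ hypothesis propagates downward along the hierarchy --- this uses the nesting $\U^{\M_{n}}_{C_{n}} \subseteq \langle \U^{\M_{n-1}}_{C_{n-1}}\rangle$ from \Cref{cn-seq}, together with the fact that $\langle \cdot \rangle$ is monotone and that an $\M_{n-1}$-minimal class is a fortiori contained in every $\U_e^X$ with $X \in \M_{n-1}$ that it meets largely. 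The other subtlety is extracting, from the bare truth of $\psi_e(G_\F, x)$, a \emph{finite $\vec R$-transitive} witness $\tau \subseteq X$ inside $G_\F$ in the base case; this is where \Cref{lem:gf-transitive} and the Mathias structure of the filter are essential, and one should state explicitly that every finite subset of $G_\F \setminus \sigma$ lying below some condition $d \le c$ is $\vec R$-transitive and contained in the corresponding reservoir.
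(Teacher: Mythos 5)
Your proof is correct and follows essentially the same strategy as the paper: induction on $n$, with the $\Sigma^0_1/\Pi^0_1$ base case handled by monotonicity together with \Cref{rem:monotonicity-sigma01} and \Cref{lem:gf-transitive}, the $\Sigma^0_{n+1}$ step by direct unfolding and the inductive hypothesis, and the $\Pi^0_{n+1}$ step by using $(n-1)$-genericity to locate a $d \in \F$ below $c$ deciding the subformula, deriving $\sigma \cup \tau \qvdash_\ell \neg\psi_e(G,x)$ from the forcing relation, then \Cref{lem:question-below-validity} together with \Cref{lem:forcing-relation-closure} and \Cref{lem:non-contradiction} to conclude $d \Vdash \neg\psi_e(G,x)$, and finally the inductive hypothesis. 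The only differences are cosmetic: you phrase the $\Pi^0_{n+1}$ clause as a proof by contradiction where the paper argues directly (showing $\neg\psi_e(G_\F,x)$ for every $x$), and you are more explicit than the paper about why the inductive hypothesis applies (propagation of genericity and of the $\P \subseteq \langle\U^{\M_k}_{C_k}\rangle$ inclusions down the hierarchy); note that the parenthetical ``for the appropriate $\Pi^0_n$ rewriting'' should read $\Sigma^0_n$, since $\neg\psi_e(G,x)$ is $\Sigma^0_n$.
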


\begin{proof}
    Suppose $n=0$.
    \begin{itemize}
        \item If $c \Vdash (\exists x)\psi_e(G,x)$, then $(\exists x)\psi_e(\sigma,X)$, and since $G_\F \succeq \sigma$, $(\exists x)\psi_e(G_\F,X)$.
        \item If $c \Vdash (\forall x) \neg \psi_e(G,x)$, then $(\forall \tau \subseteq X)(\forall x)( \tau$ is $\vec{R}$-transitive $ \implies \neg \psi_e(\sigma \cup \tau,x))$. Suppose for the contradiction that $(\exists x) \psi_e(G_\F,x)$. Then, by  \Cref{rem:monotonicity-sigma01}, there exists $\tau \subseteq G_\F \subseteq X$ such that $\psi_e(\sigma \cup \tau,x)$. However, by \Cref{lem:gf-transitive}, $G_\F$ is $\vec{R}$-transitive, hence $\tau$ is $\vec{R}$-transitive, contradicting hypothesis. 
    \end{itemize}
    Inductively, suppose $n > 0$.
      \begin{itemize}
        \item If $c \Vdash (\exists x)\psi_e(G,x)$, then $c \Vdash \psi_e(\sigma,x)$ for some~$x \in \omega$. By induction hypothesis, $(\exists x) \psi_e(G_\F,x)$.
        
        \item If $c \Vdash (\forall x) \neg \psi_e(G,x)$, then $(\forall \tau \subseteq X)(\forall x)(\forall \ell \geq |\vec{R}| )(\tau$ is $\vec{R}$-transitive$ \implies \sigma \cup \tau \qvdash_\ell \neg \psi_e(G,x))$. Fix some~$x \in \omega$.
        By $(n-1)$-genericity of~$\F$, there exists some~$d \in \F$ such that $d \Vdash \psi_e(G,x)$ or $d \Vdash \neg \psi_e(G,x)$. 
        By compatibility of the conditions in a filter, and by \Cref{lem:forcing-relation-closure}, we can suppose without loss of generality that $d \leq c$. In particular, $d := (\vec{R}\vec{S}, \sigma \cup \tau, Y)$. Since $\tau \subseteq X$ and is $\vec{R}$-transitive and since $|\vec{R}\vec{S}| \geq |\vec{R}|$, $\sigma \cup \tau \qvdash_{|\vec{R}\vec{S}|} \neg \psi_e(G,x))$.

        By \Cref{lem:question-below-validity}, there is some~$p \leq d$ such that $p \Vdash \neg \psi_e(G,x))$. Since~$p \leq d$ and $\P \subseteq \langle \U_{C_{n-1}}^{\M_{n-1}} \rangle$, then by \Cref{lem:non-contradiction}, $d \Vdash \neg \psi_e(G,x)$. By induction hypothesis, $\psi_e(G_\F, x)$ holds, and this, for every~$x$,
        so $(\forall x) \psi_e(G_\F, x)$.
    \end{itemize}
\end{proof}

\begin{lemma}\label[lemma]{lem:1-generic-infinite}
    Let $\F \subseteq \PP_\P$ be a 1-generic filter. Then $G_\F$ is infinite. 
\end{lemma}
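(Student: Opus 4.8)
The plan is to show that the set $\D_k$ of conditions $(\vec{R}, \sigma, X)$ with $\#\sigma \geq k$ is dense below every condition, for each $k \in \omega$; then a $1$-generic filter (indeed any sufficiently generic filter) meets every $\D_k$, forcing $G_\F$ to be infinite. So fix a condition $c := (\vec{R}, \sigma, X) \in \PP_\P$ with $\#\sigma < k$; I must produce an extension $d \leq c$ with $\#\sigma_d \geq k$. It suffices to handle the case of adding a single element, i.e. to find $d = (\vec{R}, \sigma \cup \{a\}, Y) \leq c$ with $a \in X$ and $Y \in \P$ infinite, and then iterate finitely many times.

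The mechanism for adding one element is exactly the combinatorics of \Cref{lem:extension}, fed by partition regularity of $\P$, mimicking Case (1) of \Cref{lem:question-below-validity}. Pick any $a \in X$ with $a > |\sigma|$; such $a$ exists since $X$ is infinite (non-triviality of $\P$). I need to thin $X$ below $a$ so that \Cref{lem:em-condition-combi}/\Cref{lem:extension} applies with $\tau = \{a\}$: for each $R \in \vec{R}$, I want an infinite $Y \subseteq X \setminus \{0,\dots,a\}$, $Y \in \P$, with $\{a\} \to_R Y$ or $Y \to_R \{a\}$. For a single tournament $R$ this is a $2$-cover of $X$ (the points $y$ with $R(a,y)$ versus those with $R(y,a)$), so by partition regularity one of the two pieces is in $\P$; doing this successively for each of the finitely many $R \in \vec{R}$ (intersecting with the already-chosen piece each time, which stays in $\P$ by partition regularity applied to a $2$-cover) yields the desired $Y \in \P$, and $Y$ is infinite by non-triviality. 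Also $\sigma \cup \{a\}$ is $\vec{R}$-transitive: this is condition (4) of \Cref{def:condition} applied to $c$, which guarantees $\sigma \cup \{a\}$ is $\vec{R}$-transitive for any $a \in X$. Then \Cref{lem:extension} gives $\vec{R} \in \C_{|\vec{R}|}(\sigma \cup \{a\}, Y)$, so since $Y \in \P$ the triple $d := (\vec{R}, \sigma \cup \{a\}, Y)$ is a valid $\PP_\P$-condition; clearly $d \leq c$ and $\#\sigma_d = \#\sigma + 1$.

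Iterating this single-step extension $k - \#\sigma$ times produces an extension of $c$ in $\D_k$, so $\D_k$ is dense. A $1$-generic filter $\F$ is in particular sufficiently generic to meet each $\D_k$ (these are countably many dense sets), hence for every $k$ there is $(\vec{R}, \sigma, X) \in \F$ with $\#\sigma \geq k$, and since $G_\F = \bigcup \sigma$ over the stems of conditions in $\F$, $G_\F$ is infinite.

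The only mild subtlety — not really an obstacle — is making sure the added element $a$ is genuinely new, i.e. $a \notin \sigma$ and $a \geq |\sigma|$; this is automatic from condition (2) of \Cref{def:condition}, which says $X \cap \{0,\dots,|\sigma|\} = \emptyset$, so any $a \in X$ satisfies $a > |\sigma| \geq \#\sigma$ and in particular $a \notin F_\sigma$. Thus each extension strictly increases the size of the stem, and finitely many steps suffice. There is no genuine combinatorial difficulty here beyond correctly invoking partition regularity finitely many times and \Cref{lem:extension}; the statement is essentially a sanity check that the forcing produces infinite objects.
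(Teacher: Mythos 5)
Your density argument for $\D_k$ is correct, and the single-step extension (thinning $X$ by iterated partition regularity and invoking \Cref{lem:extension}) is sound. However, the final step contains a genuine gap. You assert that a $1$-generic filter \qt{is in particular sufficiently generic to meet each $\D_k$,} but this conflates two different notions. In this paper, \qt{$1$-generic} has the specific technical meaning that $\F$ decides every $\Sigma^0_1$ formula, which is strictly weaker than meeting every member of some countable family of dense sets. In particular, in the effective construction of \Cref{thm:effective-strong-main} the filter is built by a $P$-computable procedure to decide $\Sigma^0_{k+1}$ formulas for $k \leq n$, not to meet arbitrary dense sets such as $\D_k$; yet \Cref{lem:1-generic-infinite} must apply to that filter, so the lemma cannot be deduced from meeting $\D_k$.

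The paper's proof works directly from the definition of $1$-genericity and avoids density of $\D_k$ altogether: apply $1$-genericity to the $\Sigma^0_1$ formula $(\exists y > x)(y \in G)$. Either some $c \in \F$ forces this, in which case the stem of some condition in $\F$ has an element above $x$; or some $c = (\vec{R}, \sigma, X) \in \F$ forces its negation. Unfolding the $\Pi^0_1$ forcing relation, the latter asserts that for every $\vec{R}$-transitive $\tau \subseteq X$ and every $y \in \sigma \cup \tau$ one has $y \leq x$; taking any $y \in X$ with $y > x$ and $\tau = \{y\}$ yields an immediate contradiction, with no thinning of $X$ and no use of partition regularity. To repair your approach within the paper's framework, you would need to translate \qt{$\F$ meets $\D_k$} into deciding an explicit $\Sigma^0_1$ formula, which is essentially what the paper's argument already does, and more directly.
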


\begin{proof}
    By 1-genericity of $\F$, for all $x$, there exists $c := (\vec{R}, \sigma, X) \in \F$ such that $c \Vdash (\exists y > x)( y \in G)$ or $c \Vdash (\forall y > x) (y \not \in G)$. Suppose the latter holds for some~$x$.

    Unfolding the definition of the forcing relation for $\Pi^0_1$ formulas, $(\forall \tau \subseteq X)(\forall y)(\tau \mbox{ is } \vec{R}\mbox{-transitive} \implies y \leq x \vee y \not \in \sigma \cup \tau)$.
    Since~$X$ is infinite, there is some~$y \in X$ such that $y > x$. Letting~$\tau = \{y\}$, $\tau$ is $\vec{R}$-transitive, $y > x$ and $y \in \sigma \cup \{y\}$. Contradiction.
\end{proof}

\section{Strong cone avoidance for arithmetic reductions}\label[section]{sect:strong-avoidance-arith}

We now use the framework developed in \Cref{sect:forcing-framework} to prove that $\EM$ admits strong cone avoidance for arithmetic reductions. In other words, the goal of this section is to prove our first main theorem:

\begin{repmaintheorem}{thm:arithmetic-main}
If $B$ is not arithmetic, then for every tournament~$T$, there is an infinite transitive subtournament~$H$ such that $B$ is not $H$-arithmetic.
\end{repmaintheorem}

Recall that in \Cref{sect:uc-sequence}, we stated the existence of an infinite sequence of Scott sets $\M_0 \subseteq \M_1 \subseteq \dots$ respectively coded by some sets~$M_0, M_1, \dots$, together with a sequence of sets $C_0, C_1, \dots$ such that $\emptyset^{(n+1)} \oplus C_n \in \M_{n+1}$, $M'_n \leq_T \emptyset^{(n+1)}$
and 
$$\langle \U_{C_0}^{\M_0} \rangle \supseteq \langle \U_{C_1}^{\M_1} \rangle \supseteq \dots$$
are partition regular classes. By \Cref{lem:intersection-still-large}, the class $\P_\omega = \bigcap_n \langle \U_{C_n}^{\M_n} \rangle$ is again partition regular.

\begin{definition}\label[definition]{def:omega-condition}
Let $\PP_\omega = \PP_{\P_\omega}$.
\end{definition}

Since $\P_\omega = \bigcap_n \langle \U_{C_n}^{\M_n} \rangle$, then all the hypothesis of the form $\P \subseteq \langle \U_{C_n}^{\M_n} \rangle$ hold in \Cref{sect:forcing-framework}. By \Cref{lem:question-below-complexity}, the forcing question to decide $\Sigma^0_n(G)$ formulas is $\Pi^0_1(\M_n)$, hence the definitional complexity of the forcing question is not at the same level in the hierarchy as the formula we force. Thankfully, in the case of arithmetic reductions, this difference is not relevant. Indeed, all the sets in~$\M_n$ are arithmetic, so if a set $B$ is not arithmetic, it is in particular not $\Pi^0_2(\M_n)$ for any~$n$.

\begin{lemma}\label[lemma]{lem:arith-diag}
Suppose $B$ is not arithmetic. 
Let $\F$ be a sufficiently generic $\PP_\omega$-filter. Then for every $n \in \omega$ and every $\Sigma^0_n$ formula $\varphi(G, x)$, there exists $d \in \F$ such that

$$ (\exists x \notin B)(d \Vdash \varphi(G, x)) \ \lor \   (\exists x \in B) (d \Vdash \neg \varphi(G, x)). $$
\end{lemma}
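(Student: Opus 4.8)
The plan is to mimic the standard density argument used to prove \Cref{prop-uniform-preservation-sigman}, but adapted to the fact that our forcing question for $\Sigma^0_n(G)$ formulas is only $\Pi^0_1(\M_n)$ rather than genuinely $\Sigma^0_n$. The key point is that $B$ being non-arithmetic is a much stronger hypothesis than being non-$\Sigma^0_n$: every set in $\bigcup_n \M_n$ is arithmetic, so $B$ is not $\Pi^0_2(\M_n)$ for any $n$, and in particular the set $W$ defined from the forcing question below will be arithmetic, hence distinct from $B$.

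\begin{proof}
Fix $n \in \omega$ and a $\Sigma^0_n$ formula $\varphi(G,x) = (\exists y)\psi_e(G,x,y)$. Let $\D_\varphi$ be the set of conditions $d$ such that either $d \Vdash \varphi(G,a)$ for some $a \notin B$, or $d \Vdash \neg\varphi(G,a)$ for some $a \in B$. It suffices to show $\D_\varphi$ is dense in $\PP_\omega$, since then any sufficiently generic filter meets it.

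Fix a condition $c := (\vec{R},\sigma,X) \in \PP_\omega$ and let $m := |\vec{R}|$. Consider the set
$$ W := \{\, a \in \omega : \sigma \qvdash_m \varphi(G,a) \,\}. $$
By \Cref{lem:question-below-complexity}, the predicate $a \mapsto (\sigma \qvdash_m \varphi(G,a))$ is $\Pi^0_1(\M_n)$, uniformly in $a$; hence $W$ is a $\Pi^0_1(\M_n)$ set. Since every element of $\M_n$ is arithmetic, $W$ is arithmetic, so $W \neq B$. Pick $a \in W \mathbin{\triangle} B$.

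If $a \in W \setminus B$, then $\sigma \qvdash_m \varphi(G,a)$; since $\P_\omega \subseteq \langle \U_{C_{n-1}}^{\M_{n-1}} \rangle$ (when $n \geq 1$; for $n = 0$ the statement is about $\Sigma^0_0$ formulas and is trivial, so assume $n \geq 1$ and note $\varphi$ is $\Sigma^0_{(n-1)+1}$), \Cref{lem:question-below-validity} gives $d \leq c$ with $d \Vdash \varphi(G,a)$, and $a \notin B$, so $d \in \D_\varphi$. If instead $a \in B \setminus W$, then $\sigma \nqvdash_m \varphi(G,a)$, so by \Cref{lem:question-below-validity} there is $d \leq c$ with $d \Vdash \neg\varphi(G,a)$, and $a \in B$, so $d \in \D_\varphi$. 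In both cases $\D_\varphi$ is dense, and since $\F$ is sufficiently generic it contains some $d \in \D_\varphi$, which is the desired condition.
\end{proof}

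The only subtlety — and the place where the argument genuinely uses the non-arithmeticity of $B$ rather than mere non-$\Sigma^0_n$-ness — is the observation that the forcing question lives at complexity $\Pi^0_1(\M_n)$, i.e. two levels above $\Sigma^0_n$ in the absolute hierarchy but still arithmetic; this mismatch is exactly why \Cref{thm:layerwise-main} requires a different, more refined forcing question (with the reservoir over-approximated more carefully) whereas here a crude over-approximation suffices. I also need to be mildly careful about re-indexing: a $\Sigma^0_n$ formula is $\Sigma^0_{n'+1}$ for $n' = n-1$, so the invocations of \Cref{lem:question-below-complexity} and \Cref{lem:question-below-validity} are applied with parameter $n' = n-1$, and the hypothesis $\P_\omega \subseteq \langle \U_{C_{n'}}^{\M_{n'}} \rangle$ holds since $\P_\omega = \bigcap_k \langle \U_{C_k}^{\M_k} \rangle$.
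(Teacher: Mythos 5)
Your proof is correct and follows essentially the same argument as the paper's: define $W = \{a : \sigma \qvdash_m \varphi(G,a)\}$, note it is $\Pi^0_1(\M_n)$ (hence arithmetic, hence $\neq B$) by \Cref{lem:question-below-complexity}, pick $a \in W \mathbin{\triangle} B$, and conclude via \Cref{lem:question-below-validity}. Your packaging as a density argument and your explicit handling of the $n=0$ case and the reindexing $n \mapsto n-1$ are minor presentational differences; incidentally, your complexity bound $\Pi^0_1(\M_n)$ is the tight one (the paper states $\Pi^0_n(\M_n)$, which appears to be a harmless slip).
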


\begin{proof}
Fix some $c = (\vec{R}, \sigma, X) \in \F$, and let~$\varphi(G, x)$ be a $\Sigma^0_n$ formula for some~$n > 0$. Say~$m = |\vec{R}|$.
Let $W = \{ x :   \sigma \qvdash_m \varphi(G, x) \}$. By \cref{lem:question-below-complexity}, the set $W$ is $\Pi^0_n(\M_n)$. Since $B$ is not arithmetic, $W \neq B$. Let $x \in W \Delta B = (W \setminus B) \cup ( B \setminus W)$. One of the two cases holds:
\begin{itemize}
    \item $x \in W \setminus B$, then, by \cref{lem:question-below-validity}, there exists a condition $d \leq c$ such that $d \Vdash \varphi(G, x)$.
    \item $x \in B \setminus W$, then, by \cref{lem:question-below-validity},   there exists a condition $d \leq c$ such that $d \Vdash \neg \varphi(G, x)$.
\end{itemize}
In both cases, by genericity of~$\F$, there is such a~$d$ in~$\F$.
\end{proof}

We are now ready to prove our first main theorem.

\begin{proof}[Proof of \Cref{thm:arithmetic-main}]
Fix a non-arithmetic set~$B$, a tournament~$T$, and let~$\F$ be a sufficiently generic $\PP_\omega$-filter containing the condition $(T, \emptyset, \omega)$. By \Cref{lem:n-genericity}, $\F$ is $n$-generic for every~$n \in \omega$. By \Cref{lem:gf-transitive}, $G_\F$ is $T$-transitive, and by \Cref{lem:1-generic-infinite}, $G_\F$ is infinite.

We claim that $B$ is not $G_\F$-arithmetic:
Fix a $\Sigma^0_n$ formula $\varphi(G, x)$. By \Cref{lem:arith-diag},
there exists some~$c \in \F$ such that 
$$ (\exists x \notin B)(c \Vdash \varphi(G, x)) \ \lor \   (\exists x \in B) (c \Vdash \neg \varphi(G, x)). $$
By \Cref{lem:forcing-implies-truth}, $(\exists x \notin B)\varphi(G_\F, x) \lor  (\exists x \in B) \neg \varphi(G_\F, x)$, hence $B$ is not $G_\F$-arithmetic.
\end{proof}

\section{Layerwise strong cone avoidance}\label[section]{sect:layerwise-avoidance}

In this section, we are going to twist the previous notion of forcing to obtain a layerwise version of \Cref{thm:arithmetic-main}. More precisely, the goal of this section is to prove the following theorem:

\begin{repmaintheorem}{thm:layerwise-main}
Fix $n \geq 1$. If $B$ is not $\Sigma^0_n$, then for every tournament~$T$, there is an infinite transitive subtournament~$H$ such that $B$ is not $\Sigma^0_n(H)$.
\end{repmaintheorem}

As explained in \Cref{sect:picture-forcing-question}, the proof of such theorems is closely related to the existence of a uniformly $\Sigma^0_n$-preserving forcing question, that is, a forcing question for~$\Sigma^0_n(G)$ formulas which is $\Sigma^0_n$ uniformly in its parameters.

Unfortunately, by \Cref{lem:question-below-complexity}, forcing a $\Sigma^0_n(G)$ formula is $\Pi^0_1(\M_n)$, which is not the desired definitional complexity. We are going to use the same trick as Monin and Patey~\cite{monin2021weakness} and define a twisted notion of forcing \qt{on the top}, that is, leaving the lower levels unchanged, we are going to replace the Scott set~$\M_n$ by another Scott set~$\Nc_n$ with more suited properties, and define a different forcing question on the top level.

Intuitively, the bad complexity of the forcing question comes from the fact that, since there is no effectiveness restriction on the reservoir~$X$, the only way to decide properties is to check whether the class of sets satisfying this property is large. Largeness of a $\Sigma^0_n$ property is $\Pi^0_{n+1}$. Therefore, at the top level, the forcing question will have to directly involve the reservoir. The counterpart is that the forcing conditions will need to impose effectiveness restrictions on the reservoirs, which will raise a few technical difficulties.

\subsection{Top Scott set}

Suppose $B$ is a non-$\Sigma^0_{n+1}$ set. The forcing question on the top for~$\Sigma^0_{n+1}$ formulas will be~$\Sigma^0_1(\M_n)$, so for \Cref{prop-uniform-preservation-sigman} about diagonalization to work, one needs~$B$ not to be $\Sigma^0_1(\M_n)$. We will therefore replace the Scott set~$\M_n$ with another Scott set~$\Nc_n$ coded by a set~$N_n$ with the following two properties:
\begin{itemize}
    \item $\emptyset^{(n)}$ is coded by an element of~$\Nc_n$ ;
    \item $B$ is not $\Sigma^0_1(\Nc_n)$ 
\end{itemize}
The second fact replaces the previous assumption that $N_n'$ is computable in~$\emptyset^{(n+1)}$.
The existence of such a Scott set follows from the following proposition by Wang~\cite[Theorem 3.6]{wang2016definability}:

\begin{proposition}[Wang~\cite{wang2016definability}]\label[proposition]{prop:wang}
Let~$Z, B$ such that $B$ is not $\Sigma^0_1(Z)$. For every~$Z$-computable tree~$T \subseteq 2^ {<\NN}$, there exists an infinite path~$P \in [T]$ such that $B$ is not $\Sigma^0_1(Z \oplus P)$.
\end{proposition}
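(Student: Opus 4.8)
Proposition~\ref{prop:wang} (Wang): given $Z, B$ with $B$ not $\Sigma^0_1(Z)$, every $Z$-computable tree $T \subseteq 2^{<\NN}$ has an infinite path $P \in [T]$ with $B$ not $\Sigma^0_1(Z \oplus P)$.

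\medskip

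The plan is to run a Jockusch--Soare style low-basis-flavored forcing inside $[T]$, but instead of (or in addition to) a lowness requirement, to meet for each index $e$ a requirement $\mathcal{R}_e$ ensuring $W_e^{Z \oplus P} \neq B$. I would work with conditions that are $Z$-computable subtrees $S \subseteq T$ with $[S] \neq \emptyset$; the generic object $P$ is the unique path through the intersection once the conditions have shrunk enough, or more conventionally, one interleaves Cohen-like finite-extension steps with tree-pruning steps. Concretely, a condition can be taken to be a pair $(\rho, S)$ where $\rho \in S$ is a finite string on the tree and $S \subseteq T$ is an infinite $Z$-computable subtree all of whose nodes are comparable with $\rho$; extension shrinks $S$ and lengthens $\rho$ along $S$. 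Crucially, at each stage only finitely much information about the condition is used, so everything stays $Z$-arithmetic, and in fact the whole construction can be carried out $Z'$-computably (or below any PA degree over $Z$, if one also wants lowness, via the low basis theorem relativized to $Z$ — but for the bare statement this is not needed).

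\medskip

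The key step is the diagonalization module for $\mathcal{R}_e$. Given a condition $(\rho, S)$, I define the set
$$ V_e := \{\, x \in \NN : \exists \tau \in S \text{ with } \tau \succeq \rho \text{ and } x \in W_e^{Z \oplus \tau} \,\}, $$
i.e. the set of $x$ that \emph{could} be enumerated into $W_e^{Z \oplus P}$ by some extension of $\rho$ staying in $S$. This $V_e$ is $\Sigma^0_1(Z)$ uniformly (an existential quantifier over a $Z$-computable tree and a $Z$-computable use computation). Since $B$ is not $\Sigma^0_1(Z)$, $B \neq V_e$, so there is a witness $x$ in the symmetric difference. If $x \in V_e \setminus B$: pick $\tau \in S$, $\tau \succeq \rho$, with $x \in W_e^{Z \oplus \tau}$; pass to the condition $(\tau, S_\tau)$ where $S_\tau = \{\nu \in S : \nu \text{ comparable with } \tau\}$; then $x \in W_e^{Z \oplus P}$ but $x \notin B$, so $W_e^{Z \oplus P} \neq B$ forever. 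If $x \in B \setminus V_e$: keep the condition $(\rho, S)$ unchanged (or any extension within $S$); since no extension of $\rho$ in $S$ ever puts $x$ into $W_e$, we will have $x \notin W_e^{Z \oplus P}$ while $x \in B$, again $W_e^{Z \oplus P} \neq B$. Either way $\mathcal{R}_e$ is met, and the action preserves infiniteness of the tree. Meeting the genericity/infinite-path requirements ($\mathcal{G}_k$: $\rho$ has length $\geq k$) is routine by walking down the infinite tree $S$.

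\medskip

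Assembling: start from $(\epsilon, T)$, and process requirements $\mathcal{R}_0, \mathcal{G}_0, \mathcal{R}_1, \mathcal{G}_1, \dots$ in order, each step producing a valid extension by the module above; let $P = \bigcup_s \rho_s$. Then $P \in [T]$ is infinite, and every $\mathcal{R}_e$ is satisfied, so $W_e^{Z \oplus P} \neq B$ for all $e$, i.e. $B$ is not $\Sigma^0_1(Z \oplus P)$. The main obstacle to watch is purely bookkeeping: one must make sure that $V_e$ is genuinely $\Sigma^0_1(Z)$ and not of higher complexity — this hinges on $S$ being \emph{$Z$-computable} (not merely $Z$-arithmetic), which is why the conditions are indexed by $Z$-computable subtrees and why, after a pruning action, the new tree $S_\tau$ must again be $Z$-computable; it is, since comparability with a fixed finite string $\tau$ is a $Z$-computable (indeed computable) test applied to a $Z$-computable tree. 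The only subtlety is that after passing to $S_\tau$ we need $[S_\tau] \neq \emptyset$, which holds because $\tau$ was chosen on $S$ and $S$ is infinite above every one of its nodes once we have arranged (by an initial pruning) that $S$ has no dead ends; maintaining "no dead ends" is the standard trick of replacing $S$ by its subtree of extendible nodes, which is $\Pi^0_1(Z)$ in general — so to keep things $Z$-computable one instead works directly with the canonical leftmost-path bookkeeping, or simply accepts a $Z'$-computable construction. For the statement as used in the paper, a $Z$-arithmetic (indeed $Z'$-computable) construction suffices, so this subtlety can be absorbed without difficulty.
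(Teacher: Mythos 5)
The paper states this proposition as a citation to Wang and does not reprove it, so there is no in-paper proof to compare against; I evaluate your argument on its own terms. There is a genuine gap in your Case~1, and you have in fact spotted it yourself, but neither of the fixes you sketch actually repairs it. Your $V_e$ is defined by an existential quantifier over individual nodes of $S$: $V_e = \{x : \exists \tau \in S,\ \tau \succeq \rho,\ x \in W_e^{Z\oplus\tau}\}$. When $x \in V_e \setminus B$, the witness $\tau$ you extract need not be extendible in $S$; if it is a dead end then $S_\tau$ is a finite tree, the condition $(\tau, S_\tau)$ is vacuous, and $\mathcal{R}_e$ has not been met. Worse, it can happen that \emph{every} witness $\tau$ is a dead end, so that on the extendible part of $S$ the element $x$ is never enumerated and the requirement should really have been handled as in Case~2 --- yet your $V_e$ cannot distinguish these two situations. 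Your first proposed repair (prune to the subtree of extendible nodes once at the start) does not help: that subtree is only $\Pi^0_1(Z)$, so the $V_e$ built over it becomes $\Sigma^0_2(Z)$, and then the key inference ``$B$ not $\Sigma^0_1(Z)$ so $B \neq V_e$'' no longer applies. Your second repair (``accept a $Z'$-computable construction'') is also beside the point: the obstacle is the \emph{definitional} complexity of $V_e$, not the computational cost of carrying out the construction; making the construction $Z'$-computable does not lower $V_e$ back to $\Sigma^0_1(Z)$.

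The standard way to close the gap, and the one Wang's argument uses, is to quantify $V_e$ over \emph{levels} of $S$ rather than over nodes: set
$V_e := \{\, x : (\exists n)(\forall \sigma \in S \cap 2^n)\ x \in W_e^{Z\oplus\sigma} \,\}$.
Since $S$ is $Z$-computable, $S \cap 2^n$ is a finite $Z$-computable set for each $n$, and after pulling the stage quantifier out of the bounded universal this is still $\Sigma^0_1(Z)$, so the diagonalization against $B$ goes through. The case analysis then needs no talk of extendibility at all. If $x \in V_e \setminus B$, every infinite path $P \in [S]$ passes through some $\sigma \in S \cap 2^n$, hence $x \in W_e^{Z\oplus P}$ already for every future path; the condition can stay unchanged. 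If $x \in B \setminus V_e$, then for every $n$ some $\sigma \in S \cap 2^n$ has $x \notin W_e^{Z\oplus\sigma}$, so $S' := \{\sigma \in S : x \notin W_e^{Z\oplus\sigma}\}$ is infinite, is downward closed by the use property, and is $Z$-computable; shrink to $S'$ and every $P \in [S']$ has $x \notin W_e^{Z\oplus P}$. Both cases preserve a $Z$-computable infinite subtree, the finite-extension genericity steps interleave as you describe, and the rest of your assembly is correct. In short, your overall forcing architecture is right; the one change needed is replacing your node-existential $V_e$ by the level-universal one.
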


However, the two properties above are not sufficient to define~$\Nc_n$. Indeed, there are still no effectiveness restrictions on the initial tournament~$T$, and in particular, $T$ cannot be assumed to be in~$\Nc_n$, but in the proof of \Cref{lem:question-below-validity}, one needs to split the reservoir~$X$ based on a $Z \oplus T$-computable finite partition, for some~$Z \in \Nc_n$. The resulting reservoir must still belong to~$\Nc_n$ and to the partition regular class~$\P$. The Scott set~$\Nc_n$ must therefore enjoy the following property:
\begin{itemize}
    \item For every $X \in \Nc_n \cap \P$, every~$Z \in \Nc_n$ and every~$T \oplus Z$-computable set~$A$,
there exists an infinite set $Y \subseteq X \cap A$ or $Y \subseteq X \cap \overline{A}$
such that $Y \in \Nc_n \cap \P$.
\end{itemize}

For this, we will prove the following proposition, which is an adaptation of an alternative proof by Hirschfeldt~\cite[Lemma~6.63]{hirschfeldt2017slicing} of a theorem by Dzhafarov and Jockusch~\cite{jockusch2009ramsey}. The statement of the proposition can be found in Monin and Patey~\cite[Theorem 5.1]{monin2022partition} and \cite[Theorem 4.11]{monin2021weakness}, but without the assumption that $H \in \U^Z_C$. We therefore give a direct proof of it for the sake of simplicity.

\begin{proposition}\label[proposition]{prop:dzhafarov-jockusch}
Let~$Z, B$ such that $B$ is not $\Sigma^0_1(Z)$. 
Let~$\U^Z_C$ be a partition regular class.
For every set~$A$, there exists an infinite subset~$H \subseteq A$ or $H \subseteq \overline{A}$ such that $B$ is not $\Sigma^0_1(Z \oplus H)$ and $H \in \U^Z_C$.
\end{proposition}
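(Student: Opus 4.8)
The plan is to run a forcing argument using a variant of Mathias forcing whose reservoirs are required both to lie in the partition regular class $\U^Z_C$ and to witness the non-$\Sigma^0_1(Z)$-ness of $B$, following the structure of Hirschfeldt's proof of Dzhafarov--Jockusch but threading through the extra class constraint. First I would observe that either $\L_A \cap \U^Z_C$ is large or $\L_{\overline A} \cap \U^Z_C$ is large: indeed $A,\overline A$ form a $2$-cover of $\omega$, so by partition regularity of $\U^Z_C$ one of $A, \overline A$ lies in $\U^Z_C$; say $A \in \U^Z_C$ (the other case is symmetric), and then $\U^Z_C \cap \L_A$ is partition regular and nonempty since $A$ belongs to it. Fixing the side $A$, the conditions are pairs $(F, Y)$ with $F$ a finite subset of $A$, $Y \in \U^Z_C \cap \L_A$ an infinite subset of $A$ with $\max F < \min Y$, ordered by the usual Mathias extension; a sufficiently generic filter yields an infinite $H \subseteq A$. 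The point of requiring $Y \in \U^Z_C$ (which is upward closed and partition regular) is exactly that it is preserved under the splitting moves below and that $H$ itself, being forced into every $Y$ along the filter, will belong to $\U^Z_C$ — here one uses that a partition regular subclass of $\L_H$ must contain $H$, as noted in the excerpt for the classes $\L_X$.

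The genericity requirements are of two kinds. To make $H$ infinite, for each $k$ the set of conditions $(F,Y)$ with $|F| \geq k$ is dense: given $(F,Y)$, pick any $k$ elements of the infinite set $Y$, call this finite set $D$, and pass to $(F \cup D, Y \setminus \{0,\dots,\max D\})$; the new reservoir is a cofinite-in-$Y$ subset, hence still in $\U^Z_C$ by partition regularity applied to the $2$-cover of $Y$ by $Y \setminus \{0,\dots,\max D\}$ and a finite set (using that $\U^Z_C$ is nontrivial, i.e.\ partition regular containing only the relevant infinite witnesses — or more simply, largeness plus upward closure suffice here). To diagonalize against $\Sigma^0_1(Z \oplus H)$ definitions of $B$: for each index $e$ we want a condition deciding, for some $x$, whether $x \in W_e^{Z \oplus H}$ agrees or disagrees with $x \in B$. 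Given $(F,Y)$, consider the set $W = \{ x : (\exists \text{ finite } E \subseteq Y)\ x \in W_e^{Z \oplus (F \cup E)} \}$. This set $W$ is $\Sigma^0_1(Z)$, since it is $\Sigma^0_1$ in $Z \oplus Y \oplus F$ but the quantifier over finite $E \subseteq Y$ can be replaced, via the use principle, by a quantifier over finite strings extending $F$'s characteristic function that are compatible with being a subset of $Y$ — and crucially we do not need $Y$ as a parameter because we only need an over-approximation: let instead $W^+ = \{x : (\exists \text{ finite } E)\ \min E > \max F \ \wedge\ x \in W_e^{Z \oplus (F \cup E)}\}$, which is $\Sigma^0_1(Z)$ outright, with $W \subseteq W^+$, and work with $W^+$. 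Since $B$ is not $\Sigma^0_1(Z)$, $W^+ \neq B$, so pick $x \in W^+ \Delta B$. If $x \in W^+ \setminus B$: there is a finite $E$ with $\min E > \max F$ and $x \in W_e^{Z \oplus (F \cup E)}$; but we need $F \cup E \subseteq A$, so first shrink — actually one argues more carefully here, see below. If $x \in B \setminus W^+$: then no finite $E$ disjoint-above-$F$ forces convergence, so the condition $(F,Y)$ already forces $x \notin W_e^{Z \oplus H}$, and combined with $x \in B$ we are done with no extension needed.

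The subtlety, and the main obstacle, is the first case: we found a convergence witness $E$ with $\min E > \max F$, but possibly $E \not\subseteq Y$, so we cannot just extend to $(F \cup E, \dots)$. The fix, which is the heart of the Hirschfeldt-style argument, is to set up the $\Sigma^0_1(Z)$ set more cleverly so that the witness is forced to come from inside the current reservoir while still keeping $Z$ as the only parameter — this is where $\U^Z_C$ being partition regular pays off. Concretely: define $W^+_e$ so that $x \in W^+_e$ iff there is a finite set $E$ with $\min E > \max F$ and $x \in W_e^{Z \oplus (F \cup E)}$, AND the "trace" of $E$ does not by itself take us out of a large subclass; more precisely one partitions the reservoir $Y$ according to which potential convergence-witnesses $E \subseteq \{0,\dots,t\}$ it can supply, applies partition regularity of $\U^Z_C \cap \L_A$ to select an infinite sub-reservoir $Y' \subseteq Y$ in the class that is homogeneous enough that any such $E$ we use actually sits inside $Y'$, and only then declares success. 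If $W^+$ (so defined, still $\Sigma^0_1(Z)$ by compactness bounding $t$) meets $B$ the wrong way we get a genuine extension $(F \cup E, Y')$ inside $A$, inside $\U^Z_C$, with $\min(Y') > \max E$; if $W^+$ is disjoint from $B$ on some $x \in B$, the condition forces divergence. Either way the relevant dense set is met. A sufficiently generic filter then yields $H \subseteq A$ infinite with $H \in \U^Z_C$ and $B$ not $\Sigma^0_1(Z \oplus H)$, which is the claim; the symmetric case gives $H \subseteq \overline A$. The delicate bookkeeping is entirely in making the convergence-witness set $\Sigma^0_1(Z)$ without the reservoir parameter while guaranteeing witnesses land in the reservoir, exactly as in Hirschfeldt's Lemma~6.63, now with the upward-closed partition regular class carried along at every splitting step.
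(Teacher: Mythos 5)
Your opening move---fixing a side in advance by observing that one of $A, \overline{A}$ lies in $\U^Z_C$, and then running a one-sided Mathias forcing inside that side---is where the proof breaks, and it cannot be repaired within that framework. The reason is that the Dzhafarov--Jockusch disjunction is genuinely disjunctive: \emph{neither} side is guaranteed to work in advance, and the side on which cone avoidance succeeds must be determined by the construction itself, not by a preliminary combinatorial test. Concretely, take $Z = \emptyset$, $B$ any non-c.e.\ set, $\U^Z_C$ the class of all infinite sets, and $A = \{\langle B\!\uh n\rangle : n \in \NN\}$. Both $A$ and $\overline{A}$ are infinite, so both lie in $\U^Z_C$, and your WLOG is genuinely a choice; but every infinite $H \subseteq A$ computes $B$, so $B$ is $\Sigma^0_1(H)$ for \emph{every} such $H$. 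If your preliminary test picks $A$ (which it may), the one-sided forcing is doomed from the start. Your acknowledgement of the difficulty in the first case of the diagonalization---the witness $E$ need not lie in $Y$, let alone in $A$---is exactly this obstruction surfacing, and the ``fix'' you sketch (partitioning $Y$ by which $E \subseteq \{0,\dots,t\}$ it ``can supply'' and passing to a homogeneous sub-reservoir) does not resolve it: the witness $E$ is a finite set of integers that may simply fail to be a subset of $A$, and no refinement of the reservoir changes that.

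The paper's proof (and Hirschfeldt's Lemma~6.63, which you cite) is intrinsically two-sided. Conditions carry \emph{two} stems $\sigma_0 \subseteq A$, $\sigma_1 \subseteq \overline{A}$ with a common reservoir $X$, and the forcing question for the $\R$-requirements universally quantifies over all 2-partitions $R_0 \sqcup R_1$ of $X$ and asks for a witness $\rho \subseteq R_i$ on \emph{some} side $i < 2$. When the $\Sigma^0_1(Z)$ over-approximation gives $x \in W \setminus B$, one plugs in $R_i = X \cap A_i$ and lands the witness on whichever side it falls; when $x \in B \setminus W$, a $\Pi^0_1(X \oplus Z)$ class of bad partitions is nonempty, and a basis theorem (Wang's \Cref{prop:wang}) plus partition regularity lets one shrink the reservoir. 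The requirements $\R^{G_0}_{e_0} \vee \R^{G_1}_{e_1}$, $\S^{G_0}_{k_0} \vee \S^{G_1}_{k_1}$, $\R^{G_0}_{e_0} \vee \S^{G_1}_{k_1}$, $\S^{G_0}_{k_0} \vee \R^{G_1}_{e_1}$ are then combined by a pairing argument to extract a single good side \emph{at the end}. A secondary issue in your write-up: the claim that $H \in \U^Z_C$ because ``a partition regular subclass of $\L_H$ must contain $H$'' is not the right mechanism either---there is no reason $\U^Z_C \subseteq \L_H$. Membership $H \in \U^Z_C$ is an open ($\Sigma^0_1$) property and must be forced stem-by-stem via the $\S$-requirements, as the paper does, by extending the appropriate stem into $W^Z_k$ for each $k \in C$; and this too has to be set up disjunctively, since you do not know in advance on which side you can afford to do it.
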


Fix~$Z, B, \U^Z_C$ and~$A$. Say~$A_0 = A$ and $A_1 = \overline{A}$. We are going to build two sets~$G_0, G_1$ by a variant of Mathias forcing whose conditions are tuples of the form $(\sigma_0, \sigma_1, X)$, where
\begin{itemize}
    \item $(\sigma_i, X)$ is a Mathias condition with~$\sigma_i \subseteq A_i$
    \item $X \in \U^Z_C$ and $B$ is not $\Sigma^0_1(X \oplus Z)$
\end{itemize}
Consider the following two kind of requirements, for every~$e \in \omega$ and~$k \in C$:
$$\R^G_e \mbox{: } W_e^{G \oplus Z} \neq B \hspace{20pt}
\S^G_k \mbox{: } G \in \U^Z_k$$
We are going to construct~$G_0, G_1$ such that they satisfy the following requirements for every~$e_0, e_1 \in \omega$ and~$k_0, k_1 \in C$: 
$$\R^{G_0}_{e_0} \vee \R^{G_1}_{e_1} \mbox{ ; } \S^{G_0}_{k_0} \vee \S^{G_1}_{k_1} \mbox{ ; } \R^{G_0}_{e_0} \vee \S^{G_1}_{k_1} \mbox{ ; } \S^{G_0}_{k_0} \vee \R^{G_1}_{e_1}$$ 
By a pairing argument, then there is some~$i < 2$ such that~$G_i \in \U^Z_C$ and $B$ is not~$\Sigma^0_1(G_i \oplus Z)$. We will need the following three lemmas. The fourth case follows by symmetry.

\begin{lemma}\label[lemma]{lem:prop-dj-rr}
Let~$c$ be a condition and $e_0, e_1 \in \omega$. There is an extension forcing~$\R^{G_0}_{e_0} \vee \R^{G_1}_{e_1}$.
\end{lemma}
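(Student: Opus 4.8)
\textbf{Proof plan for \Cref{lem:prop-dj-rr}.}

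The plan is to build the extension by a finite-injury-free density argument internal to the variant of Mathias forcing described above, using the hypothesis that $B$ is not $\Sigma^0_1(X \oplus Z)$ for the reservoir $X$ of the given condition. Fix a condition $c = (\sigma_0, \sigma_1, X)$. First I would define the ``forcing question'' for this sub-construction: for $i < 2$ and $e_i \in \omega$, ask whether there is a finite set $\tau \subseteq X \cap A_i$ and some $n \in \omega$ such that $n \in W_{e_i}^{(\sigma_i \cup \tau) \oplus Z}$ but $n \notin B$ (equivalently, such that $W_{e_i}^{(\sigma_i \cup \tau)\oplus Z}$ can be forced to disagree with $B$ by enumerating an element outside $B$). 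The key observation is that the statement ``there exist $i < 2$, a finite $\tau \subseteq X \cap A_i$ and an $n$ with $n \in W_{e_i}^{(\sigma_i \cup \tau)\oplus Z}$'' — as a set of witnesses $n$ — is $\Sigma^0_1(X \oplus Z)$ uniformly, since $A$ is fixed (it is an arbitrary set, but here $A$ is the set handed to \Cref{prop:dzhafarov-jockusch}, and in the relevant application $A$ is $T \oplus Z$-computable; in the abstract statement one simply takes $A \leq_T X \oplus Z$ after thinning, or notes that $X$ may be chosen to compute $A$).

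The dichotomy then runs as follows. If for some $i < 2$ there is a finite $\tau \subseteq X \cap A_i$ and an $n \notin B$ with $n \in W_{e_i}^{(\sigma_i \cup \tau) \oplus Z}$, then extend $c$ to $(\sigma_0', \sigma_1', X)$ where $\sigma_i' = \sigma_i \cup \tau$ and $\sigma_{1-i}' = \sigma_{1-i}$; this keeps the same reservoir $X$ (so the invariant $B$ not $\Sigma^0_1(X \oplus Z)$ and $X \in \U^Z_C$ is trivially preserved) and it permanently forces $n \in W_{e_i}^{G_i \oplus Z} \setminus B$, hence $\R^{G_i}_{e_i}$, hence the disjunction. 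If, on the other hand, no such $i, \tau, n$ exist, then for \emph{both} $i < 2$ we have: for every finite $\tau \subseteq X \cap A_i$ and every $n \in W_{e_i}^{(\sigma_i \cup \tau)\oplus Z}$, necessarily $n \in B$. In particular, taking $i = 0$, the set $\{ n : (\exists \tau \subseteq X \cap A_0 \text{ finite})\, n \in W_{e_0}^{(\sigma_0 \cup \tau)\oplus Z}\}$ is a $\Sigma^0_1(X \oplus Z)$ subset of $B$; but it equals $W_{e_0}^{G_0 \oplus Z}$ for \emph{any} $G_0 \supseteq \sigma_0$ with $G_0 \setminus \sigma_0 \subseteq X \cap A_0$, and since we must keep $G_0 \subseteq A_0$ this is exactly the set we will end up enumerating. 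Since $B$ is not $\Sigma^0_1(X \oplus Z)$, this $\Sigma^0_1(X\oplus Z)$ set is a \emph{proper} subset of $B$, so there is some $n \in B$ that is never enumerated into $W_{e_0}^{G_0 \oplus Z}$ no matter how we extend inside $X \cap A_0$; thus already $c$ itself forces $n \in B \setminus W_{e_0}^{G_0 \oplus Z}$, i.e. $\R^{G_0}_{e_0}$, and we may take the extension to be $c$ (or any extension preserving the reservoir). Either way we obtain an extension forcing $\R^{G_0}_{e_0} \vee \R^{G_1}_{e_1}$.

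The main obstacle I anticipate is bookkeeping about which oracle sees which set: one must make sure the ``bad'' $\Sigma^0_1$ set produced in the negative case is genuinely $\Sigma^0_1(X \oplus Z)$ and not merely $\Sigma^0_1(X \oplus Z \oplus A)$ — this is why the reservoir side of a condition is required to satisfy ``$B$ not $\Sigma^0_1(X \oplus Z)$'' rather than ``$B$ not $\Sigma^0_1(Z)$'', and why in the negative case no further thinning of $X$ is needed (the reservoir is untouched, so all invariants are automatically maintained). The only genuinely nontrivial point is recognizing that forcing $\R^{G_i}_{e_i}$ can be done \emph{either} by the positive side (enumerating a witness outside $B$ via a finite extension of $\sigma_i$) \emph{or} by the negative side (observing that the whole potential enumeration is trapped inside a proper $\Sigma^0_1(X\oplus Z)$ subset of $B$, leaving a permanent witness in $B$); the disjunctive form $\R^{G_0}_{e_0} \vee \R^{G_1}_{e_1}$ is what makes the positive case only need to succeed on one of the two sides. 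No injury or iteration is involved, so the lemma is proved by this single density step.
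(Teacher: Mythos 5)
You put your finger on the right obstacle in your last paragraph, but then you dismiss it, and that dismissal is where the argument breaks. The set $V_0 = \{ n : (\exists \tau \subseteq X \cap A_0 \text{ finite})\, n \in W_{e_0}^{(\sigma_0 \cup \tau)\oplus Z}\}$ is $\Sigma^0_1(X \oplus Z \oplus A)$, \emph{not} $\Sigma^0_1(X \oplus Z)$: the quantifier ranges over $\tau \subseteq X \cap A_0$, and nothing in the statement of \Cref{prop:dzhafarov-jockusch} assumes $A$ is computable from $X \oplus Z$. The invariant carried by the condition is only ``$B$ is not $\Sigma^0_1(X \oplus Z)$,'' so $V_0 = B$ is perfectly possible, and the negative branch of your dichotomy then produces no permanent witness $n \in B \setminus V_0$. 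Your suggested repairs do not work: thinning $X$ cannot make $A$ computable from $X \oplus Z$ (thinning loses, not gains, computational power), and demanding that $X$ compute $A$ would conflict with the invariant — indeed in the application (\Cref{prop:top-extension}) $A$ is only $T \oplus Z$-computable, and the original tournament $T$ is deliberately \emph{not} in the Scott set, so $A$ may be as complex as $T$ itself.

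The paper's proof handles exactly this by over-approximating the unknown partition $(X\cap A_0, X\cap A_1)$ with a universal quantification over \emph{all} $2$-partitions $R_0 \sqcup R_1 = X$: the set $W$ of all $x$ such that every such partition admits some $i$ and $\rho \subseteq R_i$ with $x \in W_{e_i}^{(\sigma_i \cup \rho)\oplus Z}$ is $\Sigma^0_1(X \oplus Z)$ by compactness of the space of partitions, with $A$ never appearing in the oracle. If $x \in W \setminus B$ one specializes to $R_i = X \cap A_i$, exactly as you do; but if $x \in B \setminus W$, the witnessing partition $R_0 \sqcup R_1$ is some \emph{other} partition, not $(X\cap A_0, X\cap A_1)$, and one must restrict the reservoir to the side $R_i \in \U^Z_C$, invoking \Cref{prop:wang} to keep the invariant ``$B$ not $\Sigma^0_1(\cdot\oplus Z)$'' for the new reservoir. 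Your claim that ``no further thinning of $X$ is needed'' is precisely what fails: that would only hold if one could use the partition by $A$ directly, which the complexity bound forbids. This reservoir-thinning step in the negative case, and the need for \Cref{prop:wang} to justify it, is the genuinely nontrivial content that your proposal omits.
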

\begin{proof}
Say~$c = (\sigma_0, \sigma_1, X)$. Let~$W$ be the $\Sigma^0_1(X \oplus Z)$ set of all~$x \in \omega$ such that for every 2-partition $R_0 \sqcup R_1 = X$, there is some~$i < 2$ and some $\rho \subseteq R_i$ such that $x \in W_{e_i}^{(\sigma_i \cup \rho) \oplus Z}$. Since~$W$ is $\Sigma^0_1(X \oplus Z)$ while~$B$ is not, there is some~$x \in W \Delta B = (W \setminus B) \cup (B \setminus W)$. 
\begin{itemize}
    \item If~$x \in W \setminus B$, then, letting~$R_i = X \cap A_i$, there is some~$i < 2$ and some~$\rho \subseteq X \cap A_i$ such that $x \in W_{e_i}^{(\sigma_i \cup \rho) \oplus Z}$. The condition $(\sigma_i \cup \rho, \sigma_{1-i}, X \setminus \{0, \dots, \max \rho \})$ is an extension of~$c$ forcing~$\R^{G_i}_{e_i}$.
    \item If~$x \in B \setminus W$, then, let~$\C$ be the $\Pi^0_1(X \oplus Z)$ class of all $R_0 \oplus R_1$ such that $R_0 \sqcup R_1 = X$, for every~$i < 2$ and every~$\rho \subseteq R_i$, $x \not\in W_{e_i}^{(\sigma_i \cup \rho) \oplus Z}$. The class~$\C$ is non-empty, so by \Cref{prop:wang}, there is some~$R_0 \oplus R_1 \in \C$ such that $B$ is not $\Sigma^0_1(R_0 \oplus R_1 \oplus X \oplus Z)$. By partition regularity of~$\U^Z_C$, there is some~$i < 2$ such that $R_i \in \U^Z_C$. The condition $(\sigma_0, \sigma_1, R_i)$ is an extension of~$c$ forcing~$\R^{G_i}_{e_i}$.
\end{itemize}
\end{proof}

\begin{lemma}\label[lemma]{lem:prop-dj-ss}
Let~$c$ be a condition and $k_0, k_1 \in C$. There is an extension forcing~$\S^{G_0}_{k_0} \vee \S^{G_1}_{k_1}$.
\end{lemma}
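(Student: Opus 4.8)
The plan is to force $\S^{G_0}_{k_0} \vee \S^{G_1}_{k_1}$ by a single finite extension, exploiting partition regularity of $\U^Z_C$ directly. In contrast with \Cref{lem:prop-dj-rr}, no diagonalization against $B$ is needed here: the $\S$-requirements are $\Sigma^0_1$ demands on the generic sets, and are automatically met once a suitable finite initial segment of one of them has been committed.

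Write $c = (\sigma_0, \sigma_1, X)$. First I would note that $X \cap A_0$ and $X \cap A_1$ form a $2$-cover of $X \in \U^Z_C$, so by partition regularity of $\U^Z_C$ there is some $i < 2$ with $X \cap A_i \in \U^Z_C$. Since $k_i \in C$ we have $\U^Z_C \subseteq \U^Z_{k_i}$, hence $X \cap A_i \in \U^Z_{k_i}$, and by definition of $\U^Z_{k_i}$ this yields a string $\tau \in W^Z_{k_i}$ with $F_\tau \subseteq X \cap A_i$. Put $\rho = F_\tau$ and $X' = \{ y \in X : y > \max \rho \}$, taking $X' = X$ in the degenerate case $\rho = \emptyset$.

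Then I would check that $d := (\sigma_i \cup \rho, \sigma_{1-i}, X')$ is a condition extending $c$. Indeed $\rho \subseteq X$ lies above $\sigma_i$, so $(\sigma_i \cup \rho, X')$ is a Mathias condition with $\sigma_i \cup \rho \subseteq A_i$ (both $\sigma_i$ and $\rho$ being subsets of $A_i$); the reservoir $X'$ differs from $X$ by a finite set, so it is infinite, it still lies in $\U^Z_C$ (using non-triviality of $\U^Z_C$, i.e.\ closure under finite changes, applied to the $2$-cover $X = X' \cup (X \cap \{0, \dots, \max \rho\})$, whose finite piece cannot be the large one), and $B$ is not $\Sigma^0_1(X' \oplus Z)$ because $X' \oplus Z \equiv_T X \oplus Z$. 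Finally $\rho = F_\tau \subseteq \sigma_i \cup \rho$ with $\tau \in W^Z_{k_i}$, so every $G_i \supseteq \sigma_i \cup \rho$ lies in $\U^Z_{k_i}$; thus $d$ forces $\S^{G_i}_{k_i}$, and a fortiori $\S^{G_0}_{k_0} \vee \S^{G_1}_{k_1}$.

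The argument has essentially no real obstacle; the only point requiring a little care is the bookkeeping that the truncated reservoir $X'$ remains in $\U^Z_C$ and keeps $B$ non-$\Sigma^0_1$ relative to $X' \oplus Z$, which is exactly the routine finite-change computation already implicit in the extension produced in \Cref{lem:prop-dj-rr}.
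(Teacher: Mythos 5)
Your proof is correct and follows essentially the same route as the paper's: apply partition regularity of $\U^Z_C$ to the $2$-cover $X \cap A_0, X \cap A_1$, extract a finite $\rho \subseteq X \cap A_i$ witnessing membership in $\U^Z_{k_i}$, and truncate the reservoir above $\max \rho$. The additional bookkeeping you carry out (that $X'$ stays in $\U^Z_C$ and that $B$ remains non-$\Sigma^0_1(X' \oplus Z)$) is exactly what the paper leaves implicit.
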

\begin{proof}
Say~$c = (\sigma_0, \sigma_1, X)$. Since~$\U^Z_C$ is partition regular and $X \in \U^Z_C$, there is some~$i < 2$ such that $X \cap A_i \in \U^Z_C$. In particular, $X \cap A_i \in \U^Z_{k_i}$. Let~$\rho \subseteq X \cap A_i$ be such that $\rho \in \U^Z_{k_i}$.
Then the condition $(\sigma_i \cup \rho, \sigma_{1-i}, X \setminus \{0, \dots, \max \rho \})$ is an extension of~$c$ forcing~$\S^{G_i}_{k_i}$.
\end{proof}

\begin{lemma}\label[lemma]{lem:prop-dj-rs}
Let~$c$ be a condition, $e_0 \in \omega$ and $k_1 \in C$. There is an extension forcing~$\R^{G_0}_{e_0} \vee \S^{G_1}_{k_1}$.
\end{lemma}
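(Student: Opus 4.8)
The plan is to merge the diagonalization of \Cref{lem:prop-dj-rr} with the largeness bookkeeping of \Cref{lem:prop-dj-ss}. Writing $c = (\sigma_0, \sigma_1, X)$, I would first introduce the set $W$ of all $x \in \omega$ such that for every $2$-partition $R_0 \sqcup R_1 = X$, either there is some $\rho \subseteq R_0$ with $x \in W_{e_0}^{(\sigma_0 \cup \rho) \oplus Z}$, or there is some $\rho \subseteq R_1$ with $\rho \in W^Z_{k_1}$ (equivalently, $R_1 \in \U^Z_{k_1}$). Exactly as in \Cref{lem:prop-dj-rr}, a compactness argument shows $W$ is $\Sigma^0_1(X \oplus Z)$, the membership of $x$ being witnessed by a finite threshold $t$ such that every $2$-partition of $X \cap \{0,\dots,t\}$ already exhibits one of the two alternatives. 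Since $B$ is not $\Sigma^0_1(X \oplus Z)$, we get $W \neq B$, so I would fix $x \in W \Delta B$ and split into two cases.

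If $x \in W \setminus B$, I would apply the definition of $W$ to the partition $R_0 = X \cap A_0$, $R_1 = X \cap A_1$. If this yields $\rho \subseteq X \cap A_0$ with $x \in W_{e_0}^{(\sigma_0 \cup \rho) \oplus Z}$, then $(\sigma_0 \cup \rho, \sigma_1, X \setminus \{0,\dots,\max\rho\})$ extends $c$ and forces $\R^{G_0}_{e_0}$, just as in \Cref{lem:prop-dj-rr}, using $x \notin B$. If instead it yields $\rho \subseteq X \cap A_1$ with $\rho \in W^Z_{k_1}$, then $(\sigma_0, \sigma_1 \cup \rho, X \setminus \{0,\dots,\max\rho\})$ extends $c$ and forces $\S^{G_1}_{k_1}$, just as in \Cref{lem:prop-dj-ss}. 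In both subcases the truncated reservoir $X' := X \setminus \{0,\dots,\max\rho\}$ still satisfies $X' \in \U^Z_C$ and $B$ not $\Sigma^0_1(X' \oplus Z)$ (since $X' \leq_T X$), so these are genuine conditions.

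If $x \in B \setminus W$, there is a $2$-partition $R_0 \sqcup R_1 = X$ witnessing $x \notin W$: no $\rho \subseteq R_0$ puts $x$ into $W_{e_0}^{(\sigma_0 \cup \rho) \oplus Z}$, and no $\rho \subseteq R_1$ lies in $W^Z_{k_1}$ (i.e.\ $R_1 \notin \U^Z_{k_1}$). I would let $\C$ be the class of all $R_0 \oplus R_1$ with $R_0 \sqcup R_1 = X$ meeting these two conditions; it is a nonempty $\Pi^0_1(X \oplus Z)$ class, so by \Cref{prop:wang} it has a member $R_0 \oplus R_1$ with $B$ not $\Sigma^0_1(R_0 \oplus R_1 \oplus X \oplus Z)$, hence not $\Sigma^0_1(R_0 \oplus Z)$. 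Since $R_0, R_1$ form a $2$-cover of $X \in \U^Z_C$ and $R_1 \notin \U^Z_{k_1} \supseteq \U^Z_C$, partition regularity forces $R_0 \in \U^Z_C$. Then $(\sigma_0, \sigma_1, R_0)$ is a valid extension of $c$; since no finite $\rho \subseteq R_0$ enumerates $x$ into $W_{e_0}^{(\sigma_0 \cup \rho) \oplus Z}$, it forces $x \notin W_{e_0}^{G_0 \oplus Z}$, and as $x \in B$ this forces $\R^{G_0}_{e_0}$.

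The only genuinely new ingredient beyond \Cref{lem:prop-dj-rr,lem:prop-dj-ss} is the asymmetry in the last case: one must set up $W$ so that the side which may fail to carry the $\S$-requirement ($R_1$, outside $\U^Z_{k_1}$) is exactly the side we are willing to discard, since partition regularity then hands us the other side ($R_0 \in \U^Z_C$) as a legitimate reservoir on which $\R^{G_0}_{e_0}$ is already forced. I expect getting this asymmetry right — so that the $\Pi^0_1$ class and the partition-regularity step cooperate — to be the main thing to watch; the rest is a routine combination of the two earlier proofs. The remaining case $\S^{G_0}_{k_0} \vee \R^{G_1}_{e_1}$ then follows by symmetry, as the excerpt already indicates.
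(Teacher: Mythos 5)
Your proof is correct and takes essentially the same route as the paper's: the same $\Sigma^0_1(X \oplus Z)$ set $W$ encoding the disjunctive question over all 2-partitions, the same case split on $x \in W \Delta B$, the same instantiation at $R_0 = X \cap A_0$, $R_1 = X \cap A_1$ in the $W \setminus B$ case, and the same nonempty $\Pi^0_1(X \oplus Z)$ class with \Cref{prop:wang} plus partition regularity (using $R_1 \notin \U^Z_{k_1} \supseteq \U^Z_C$ to pin the surviving side to $R_0$) in the $B \setminus W$ case. The extra details you supply (that the truncated reservoir remains in $\U^Z_C$ since non-trivial partition regular classes are closed under finite changes, and the explicit compactness reason $W$ is $\Sigma^0_1$) are elaborations of steps the paper leaves implicit, not deviations.
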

\begin{proof}
Say~$c = (\sigma_0, \sigma_1, X)$. Let~$W$ be the $\Sigma^0_1(X \oplus Z)$ set of all~$x \in \omega$ such that for every 2-partition $R_0 \sqcup R_1 = X$, either there is some $\rho \subseteq R_0$ such that $x \in W_{e_0}^{(\sigma_0 \cup \rho) \oplus Z}$, or there is some~$\rho \subseteq R_1$ such that $\rho \in \U^Z_{k_1}$. Since~$W$ is $\Sigma^0_1(X \oplus Z)$ while~$B$ is not, there is some~$x \in W \Delta B = (W \setminus B) \cup (B \setminus W)$. 
\begin{itemize}
    \item If~$x \in W \setminus B$, then, letting~$R_i = X \cap A_i$, either there is some~$\rho \subseteq X \cap A_0$ such that $x \in W_{e_0}^{(\sigma_0 \cup \rho) \oplus Z}$, or some~$\rho \subseteq X \cap A_1$ such that $\rho \in \U^Z_{k_1}$. The condition $(\sigma_i \cup \rho, \sigma_{1-i}, X \setminus \{0, \dots, \max \rho \})$ is an extension of~$c$ forcing~$\R^{G_0}_{e_0}$ in the first case, and $\S^{G_1}_{k_1}$ in the second case.
    \item If~$x \in B \setminus W$, then, let~$\C$ be the $\Pi^0_1(X \oplus Z)$ class of all $R_0 \oplus R_1$ such that for every~$\rho \subseteq R_0$, $x \not\in W_{e_0}^{(\sigma_i \cup \rho) \oplus Z}$, and $R_1 \not \in \U^Z_{k_1}$. The class~$\C$ is non-empty, so by \Cref{prop:wang}, there is some~$R_0 \oplus R_1 \in \C$ such that $B$ is not $\Sigma^0_1(R_0 \oplus R_1 \oplus X \oplus Z)$. By partition regularity of~$\U^Z_C$, there is some~$i < 2$ such that $R_i \in \U^Z_C$, and by choice of~$\C$, $i = 0$. The condition $(\sigma_0, \sigma_1, R_0)$ is an extension of~$c$ forcing~$\R^{G_0}_{e_0}$.
\end{itemize}
\end{proof}

We are now ready to prove \Cref{prop:dzhafarov-jockusch}.

\begin{proof}[Proof of \Cref{prop:dzhafarov-jockusch}]
Fix~$Z, B, \U^Z_C$ and~$A$. Say~$A_0 = A$ and $A_1 = \overline{A}$.
Let~$\F$ be a sufficiently generic filter for this notion of forcing.
For every~$i < 2$, let $G_i = \bigcup_{(\sigma_0, \sigma_1, X) \in \F} \sigma_i$.
By definition of a condition, $G_0 \subseteq A$ and $G_1 \subseteq \overline{A}$.
By \Cref{lem:prop-dj-rr}, \Cref{lem:prop-dj-ss}, \Cref{lem:prop-dj-rs}
and its symmetric version, there is some~$i < 2$ such that~$G_i \in \U^Z_C$ and $B$ is not~$\Sigma^0_1(G_i \oplus Z)$. In particular, assuming~$\U^Z_C$ contains only infinite sets, $G_i$ is infinite.
This completes the proof of \Cref{prop:dzhafarov-jockusch}.
\end{proof}

Thanks to \Cref{prop:wang} and \Cref{prop:dzhafarov-jockusch}, one can prove the existence of a Scott set~$\Nc_n$ satisfying the properties mentioned above:

\begin{proposition}\label[proposition]{prop:top-extension}
Fix~$n > 0$.
Let~$B$ be a non-$\Sigma^0_{n+1}$ set and $T$ be a tournament. 
There exists a Scott set~$\Nc_n$ such that 
\begin{itemize}
    \item $\emptyset^{(n)} \in \Nc_n$ ; $B$ is not $\Sigma^0_1(\Nc_n)$ ;
    \item for every $X \in \Nc_n \cap \langle \U_{C_{n-1}}^{\M_{n-1}}\rangle$, every~$Z \in \Nc_n$ and every~$T \oplus Z$-computable set~$A$, there exists an infinite set $Y \subseteq X \cap A$ or $Y \subseteq X \cap \overline{A}$ such that $Y \in \Nc_n \cap \langle \U_{C_{n-1}}^{\M_{n-1}}\rangle$.
\end{itemize}
\end{proposition}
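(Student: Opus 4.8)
The plan is to build $\Nc_n$ as the union of an increasing $\omega$-chain of countable Scott sets $\Nc_n^{(0)} \subseteq \Nc_n^{(1)} \subseteq \cdots$, each coded by a set, where at each step we close off under one of the three operations we need: (i) taking paths through trees (to make it a Scott set and to keep $\emptyset^{(n)}$ available), (ii) preserving non-$\Sigma^0_1$-ness of $B$ relative to every oracle we add, and (iii) performing the Dzhafarov–Jockusch split inside $\langle \U_{C_{n-1}}^{\M_{n-1}}\rangle$. First I would fix a bookkeeping enumeration of all triples $(X, Z, A)$ with $X, Z$ candidate elements of the set under construction and $A$ a $T\oplus Z$-computable set (each such $A$ given by an index $e$, so that $A = \Phi_e^{T\oplus Z}$, total), interleaved with all $Z$-computable trees $T' \subseteq 2^{<\NN}$. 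The base set $\Nc_n^{(0)}$ can be taken to be the Turing ideal generated by $\emptyset^{(n)}$ alone (or any countable Scott set containing $\emptyset^{(n)}$ with $B$ not $\Sigma^0_1$ in it — such exists since $B$ is not $\Sigma^0_{n+1}$, hence in particular not $\Sigma^0_1(\emptyset^{(n)})$, and one may invoke the low basis theorem together with \Cref{prop:wang} to obtain a Scott set coded by a set still not computing $B$ at the $\Sigma^0_1$ level).

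The successor step splits into two cases according to the kind of task being handled. For a tree-closure task with a $Z$-computable tree $T'$ where $Z \in \Nc_n^{(s)}$: if $T'$ is infinite, apply \Cref{prop:wang} with the parameter $N^{(s)}$ coding $\Nc_n^{(s)}$ (so $B$ is not $\Sigma^0_1(N^{(s)})$ by induction) to get a path $P \in [T']$ with $B$ not $\Sigma^0_1(N^{(s)} \oplus P)$, and let $\Nc_n^{(s+1)}$ be the ideal generated by $N^{(s)} \oplus P$; since $N^{(s)}$ still computes $B$ at no $\Sigma^0_1$ level relative to $P$, the new coded set inherits the property. For a split task $(X,Z,A)$ with $X \in \Nc_n^{(s)} \cap \langle\U_{C_{n-1}}^{\M_{n-1}}\rangle$, $Z \in \Nc_n^{(s)}$, $A = \Phi_e^{T\oplus Z}$: here I would apply \Cref{prop:dzhafarov-jockusch} with $Z$ replaced by $N^{(s)} \oplus T$ — note $B$ is not $\Sigma^0_1(N^{(s)} \oplus T)$ because $T$ is arbitrary and we have no control over it, so THIS is where a subtlety appears (see the obstacle paragraph). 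Granting that, \Cref{prop:dzhafarov-jockusch} applied to the partition regular class $\langle\U_{C_{n-1}}^{\M_{n-1}}\rangle = \U_{C_{n-1}}^{D}$ for the appropriate index $D$ (using \Cref{lem:compute-luc}) yields an infinite $H \subseteq X \cap A$ or $H \subseteq X \cap \overline{A}$ with $H \in \langle\U_{C_{n-1}}^{\M_{n-1}}\rangle$ and $B$ not $\Sigma^0_1(N^{(s)} \oplus T \oplus H)$; but we must not put $T$ into $\Nc_n$, so instead we simply add $H$ together with $X$ and $Z$ (which are already in), taking $\Nc_n^{(s+1)}$ generated by $N^{(s)} \oplus H$, and the non-$\Sigma^0_1$ preservation for $N^{(s)} \oplus H$ follows a fortiori from that for $N^{(s)} \oplus T \oplus H$. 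Finally set $\Nc_n = \bigcup_s \Nc_n^{(s)}$, coded by $N_n = \bigoplus_s N^{(s)}$; closure under join and Turing reducibility is automatic from taking ideals, it is a Scott set because every infinite tree in it already appears at some finite stage and had a path added at a later stage, $\emptyset^{(n)} \in \Nc_n^{(0)} \subseteq \Nc_n$, and $B$ is not $\Sigma^0_1(\Nc_n)$ since any witnessing oracle lies in some $\Nc_n^{(s)}$ hence is computed by $N^{(s)}$, contradicting the invariant.

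The main obstacle is the interaction between $T$ and the requirement "$B$ is not $\Sigma^0_1(\Nc_n)$" in the split step: \Cref{prop:dzhafarov-jockusch} needs its parameter $Z$ to satisfy "$B$ not $\Sigma^0_1(Z)$", and the natural parameter there is $N^{(s)} \oplus Z \oplus T$, but $T$ is a fixed but arbitrary tournament about which we are promised nothing except that it is a tournament. The resolution — and the reason the proposition quantifies over "$T \oplus Z$-computable $A$" rather than building $T$ into the Scott set — is that we never need $B$ to fail to be $\Sigma^0_1$ relative to $T$ itself; we only need it relative to the sets we actually place in $\Nc_n$. So the correct move is to run \Cref{prop:dzhafarov-jockusch} over the parameter $N^{(s)}$ but with the class $\U^{N^{(s)}\oplus T}_{C}$ and the set $A$ (which is $N^{(s)}\oplus T$-computable); one checks that the statement of \Cref{prop:dzhafarov-jockusch} is exactly robust enough for this — the conclusion gives $H$ with $B$ not $\Sigma^0_1(N^{(s)} \oplus T \oplus H)$ — but crucially we then only record $H$ (not $T$) into the next stage, and the weaker invariant "$B$ not $\Sigma^0_1(N^{(s)} \oplus H)$" is all that propagates. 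Verifying that the hypotheses of \Cref{prop:dzhafarov-jockusch} hold with this hybrid parameter (in particular that $\U^{N^{(s)}\oplus T}_C$ is partition regular and still equals $\langle\U_{C_{n-1}}^{\M_{n-1}}\rangle$ up to the index, via \Cref{cn-seq} and \Cref{lem:compute-luc}) is the one place the argument requires care; everything else is the routine finite-injury-free iteration of known preservation lemmas.
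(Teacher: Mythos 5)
Your overall architecture matches the paper's proof: both build $\Nc_n$ as the union of an increasing chain of countable Turing ideals obtained by iteratively adding paths through trees via \Cref{prop:wang}, adding homogeneous splits via \Cref{prop:dzhafarov-jockusch}, and at every stage preserving the invariant that $B$ is not $\Sigma^0_1$ in the set coding the current ideal. The base case, the bookkeeping, and the passage to the union are all standard and you handle them correctly.

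The place where the sketch goes wrong is the split step, and specifically the ``hybrid parameter'' idea of running \Cref{prop:dzhafarov-jockusch} ``over the parameter $N^{(s)}$ but with the class $\U^{N^{(s)}\oplus T}_{C}$.'' This is incoherent with the statement of \Cref{prop:dzhafarov-jockusch}: there the partition regular class has the form $\U^Z_C$ where $Z$ is precisely the oracle on which the conclusion ``$B$ is not $\Sigma^0_1(Z\oplus H)$'' hangs, so if the class is indexed over $N^{(s)}\oplus T$ you have committed yourself to the hypothesis ``$B$ is not $\Sigma^0_1(N^{(s)}\oplus T)$,'' which you correctly note you cannot guarantee. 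The correct and much simpler observation is that the set $A$ in \Cref{prop:dzhafarov-jockusch} is completely arbitrary --- it is not required to be $Z$-computable, and in fact the proof (the relevant $\Sigma^0_1(X\oplus Z)$ sets $W$ quantify universally over all $2$-partitions of the reservoir, so $A$ enters only when one instantiates that quantifier) makes no effectiveness assumption on $A$ whatsoever. So one applies \Cref{prop:dzhafarov-jockusch} with $Z = N^{(s)}$, with the class $\langle \U_{C_{n-1}}^{\M_{n-1}}\rangle$ expressed as a $\U^{N^{(s)}}_D$ (which is fine since $N^{(s)}$ computes $\emptyset^{(n)}$, hence $M_{n-1}$, and $C_{n-1}$ has been placed in $\Nc_n$ at stage $0$), and with $A$ the $T\oplus Z'$-computable set; the conclusion then gives $B$ not $\Sigma^0_1(N^{(s)}\oplus H)$ directly, and $T$ never appears in any parameter. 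Your conclusion ``$B$ not $\Sigma^0_1(N^{(s)}\oplus T\oplus H)$'' is not what \Cref{prop:dzhafarov-jockusch} gives, and the claim that it is ``exactly robust enough'' for the hybrid application is unjustified. You also glide past the point that the proposition as stated gives $H\subseteq A$ or $H\subseteq\overline{A}$, whereas what is needed is $H\subseteq X\cap A$ or $H\subseteq X\cap\overline{A}$; this is harmless (restrict the forcing of \Cref{prop:dzhafarov-jockusch} to an initial reservoir $X$, or work with the derived partition regular class $\{W: W\cap X\in \langle \U_{C_{n-1}}^{\M_{n-1}}\rangle\}$ and take $H\cap X$), but it should be said. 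With those repairs your proposal becomes the paper's proof.
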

\begin{proof}
By \Cref{prop:wang} and \Cref{prop:dzhafarov-jockusch}, there exists an infinite sequence~$Z_0, Z_1, \dots$ such that $Z_0 = \emptyset^{(n)}$, and for every~$s \in \omega$:
\begin{itemize}
    \item[(1)] For every~$Z_0 \oplus \dots \oplus Z_s$-computable infinite binary tree~$T$, there is some~$t$ such that $Z_t \in [T]$ ;
    \item[(2)] For every~$Z_0 \oplus \dots \oplus Z_s$-computable infinite set~$X \in \langle \U_{C_{n-1}}^{\M_{n-1}}\rangle$ and every~$T \oplus Z_0 \oplus \dots \oplus Z_s$-computable set~$A$, there exists some~$t$ such that $Z_t \subseteq X \cap A$ or $Z_t \subseteq X \cap \overline{A}$ and $Z_t \in \Nc_n \cap \langle \U_{C_{n-1}}^{\M_{n-1}}\rangle$.
    \item[(3)] $B$ is not $\Sigma^0_1(Z_0 \oplus \dots \oplus Z_s)$
\end{itemize}
Let~$\Nc_n = \{ X : (\exists s) X \leq_T Z_0 \oplus \dots \oplus Z_s \}$. By construction, $\Nc_n$ is a Turing ideal containing~$\emptyset^{(n)}$. Moreover, by (1), $\Nc_n \models \WKL$, by (2), $\Nc_n$ satisfies the second item of the lemma, and by (3), $B$ is not $\Sigma^0_1(\Nc_n)$.
\end{proof}

As explained in \Cref{sect:minimal-cohesive-classes}, given a Scott set~$\M$ coded by a set~$M$,
one can compute the index set~$C$ of an $\M$-cohesive class~$\U^\M_C$ in any PA degree over~$M'$ (see Monin and Patey~\cite[Lemma 2.17]{monin2021weakness} for a full proof). 
Since~$M_{n-1}' \leq_T \emptyset^{(n)}$ and $\emptyset^{(n)} \in \Nc_n$ which is a Scott set, then one can find the index set~$C_{n-1}$ of an $\M_{n-1}$-cohesive large class $\U^{\M_{n-1}}_{C_{n-1}}$ in~$\Nc_n$.

\subsection{Top forcing conditions}

The notion of forcing for layerwise cone avoidance resembles the previous notion of forcing, with a few distinctive features. 
\begin{itemize}
    \item First, since one needs to control only~$\Sigma^0_{n+1}(G)$ properties, the partition generic class~$\P$ will only need to be included in a minimal class of a finite level of the hierarchy of Scott sets. We will actually choose $\P = \langle \U_{C_{n-1}}^{\M_{n-1}}\rangle$.
    \item  Second, since the forcing question on the top will depend on the reservoir~$X$, one must require that $X \in \Nc_n$, in order to obtain a $\Sigma^0_1(\Nc_n)$ forcing question. Since ~$B$ is not $\Sigma^0_1(\Nc_n)$, the diagonalization lemma will hold.
    \item Last, as explained above, since the proof of \Cref{lem:question-below-validity} splits the reservoir based on 2-partitions computable in the tournaments, one must require that $\Nc_n$ is closed under this operation. By construction of~$\Nc_n$, the closure is ensured for the original tournament~$T$. However, in the proof of \Cref{lem:question-below-validity}, new tournaments will be added to the condition. Thankfully, all the new tournaments can be chosen as members of $\Pi^0_1(\Nc_n)$ classes, and since~$\Nc_n$ is a Scott set, one can require that~$\vec{R} \in \Nc_n$. The new notion of condition will therefore distinguish between the original tournament~$T$ which can be of arbitrary strength, and the new tournaments~$\vec{R}$ added along the construction of a generic filter, and which will belong to~$\Nc_n$.
\end{itemize} 

\begin{definition}\label[definition]{def:top-condition}
    Fix a tournament~$T$.
    Let $n > 0$. Let $\PP_n$ denote the set of all 3-tuples $(\vec{R},\sigma,X)$ such that
    \begin{enumerate}
        \item \vr~is a finite sequence of tournaments,
        \item $X \cap \{0, \dots, |\sigma| \} = \emptyset$, 
        \item $X \in \langle \U_{C_{n-1}}^{\M_{n-1}}\rangle$,
        \item $\vec{R}, X \in \Nc_n$,
        \item for all $y \in X$, $\sigma \cup \{y\}$ is \vr-transitive and $T$-transitive,
        \item $X$ is included in a minimal $\vec{R}$-interval and $T$-interval of~$\sigma$.
        
    \end{enumerate}
    In other words, $\PP_n$ is the set of all $\PP_\P$-conditions $((T,\vec{R}), \sigma, X)$ for~$\P = \langle \U_{C_{n-1}}^{\M_{n-1}}\rangle$ such that $\vec{R}, X \in \Nc_n$.
\end{definition}

\begin{remark}
    As an element of $\PP_\P$, a $\PP_n$ condition inherits the definitions of the forcing relation and the forcing question. The Scott set~$\Nc_n$ has been designed so that the proof of \Cref{lem:question-below-validity} still holds while ensuring that $\vec{R}$ and $X$ belong to~$\Nc_n$. 
\end{remark}

\subsection{Top forcing question}

We now define a forcing question which is very similar to \Cref{def:forcing-question-em-level0}.

\begin{definition}
Fix $n > 0$. Let $c = (\vec{R}, \sigma, X) \in \PP_n$ and let $(\exists x)\psi_e(G, x)$ be a $\Sigma^0_{n+1}$ formula. Say $m := |\vec{R}|+1$. Define the relation $c \qvdash (\exists x)\psi_e(G, x)$ to hold if for every~$m$-tuple of 2-colorings $\vec{h} = h_0, \dots, h_{m-1} \in 2^X$ and every $\vec{S} \in \C_{m}(\sigma,X)$, there is an $\vec{h}$-homogeneous and $\vec{S}$-transitive $\tau \subseteq X$ and some $x \in \omega$ such that $\sigma \cup \tau \nqvdash_m \neg \psi_e(G,x)$.   

\end{definition}

\begin{lemma}\label[lemma]{lem:question-complexity}

Fix $n > 0$. Let $c = (\vec{R}, \sigma, X) \in \PP_n$ and let $(\exists x)\psi_e(G, x)$ be a $\Sigma^0_{n+1}$ formula. The sentence $(c \qvdash (\exists x)\psi_e(G, x))$ is $\Sigma^0_1(\Nc_n)$.

\end{lemma}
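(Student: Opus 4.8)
The plan is to follow the template of the proof of \Cref{lem:question-below-complexity}: first use compactness to replace the two second-order universal quantifiers (over the colorings $\vec{h} \in 2^X$ and over the tournaments $\vec{S} \in \C_m(\sigma,X)$) by a search for a finite threshold, and then check that the oracle surviving this reduction lies in the Turing ideal $\Nc_n$. Write $m = |\vec{R}|+1$, and note that since $(\exists x)\psi_e(G,x)$ is $\Sigma^0_{n+1}$, the formula $\neg\psi_e(G,x)$ is $\Sigma^0_n$. The key preliminary observation is that the inner forcing question $\sigma \cup \tau \qvdash_m \neg\psi_e(G,x)$ is $\Pi^0_1(C_{n-1} \oplus \emptyset^{(n)})$, uniformly in $\sigma,\tau,x,m$: this is exactly what the proof of \Cref{lem:question-below-complexity} produces for a $\Sigma^0_n$ formula once one unwinds the oracle occurring there (the relevant largeness sentence is $\Pi^0_1(C_{n-1} \oplus M_{n-1}')$, and $M_{n-1}' \leq_T \emptyset^{(n)}$). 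In the present setting $C_{n-1}$ has been chosen inside $\Nc_n$ and $\emptyset^{(n)} \in \Nc_n$, so $C_{n-1} \oplus \emptyset^{(n)} \in \Nc_n$, and hence $\sigma \cup \tau \nqvdash_m \neg\psi_e(G,x)$ is $\Sigma^0_1(\Nc_n)$, uniformly in its parameters.

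Next I would carry out the compactness step, following \Cref{lem:question-below-complexity}. By König's lemma, $c \qvdash (\exists x)\psi_e(G,x)$ is equivalent to the statement: there is some $t \in \omega$ such that for every $m$-tuple of $2$-colorings $\vec{h} \in 2^t$ and every $m$-tuple $\vec{S}$ of tournaments over $\{0,\dots,t\}$ satisfying (i) $\sigma \cup \{y\}$ is $\vec{S}$-transitive for every $y \in X \cap \{0,\dots,t\}$ and (ii) $X \cap \{0,\dots,t\}$ is included in a minimal $\vec{S}$-interval of $\sigma$, there is a finite $\vec{h}$-homogeneous $\vec{S}$-transitive set $\tau \subseteq X \cap \{0,\dots,t\}$ and some $x \in \omega$ with $\sigma \cup \tau \nqvdash_m \neg\psi_e(G,x)$. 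The direction needing care is that the infinitary question implies the existence of such a $t$: otherwise the collection of ``bad'' finite pairs $(\vec{h} \uh t, \vec{S} \uh \{0,\dots,t\})$ forms an infinite, finitely branching tree, and any path $(\vec{h},\vec{S})$ through it still witnesses the failure of $c \qvdash (\exists x)\psi_e(G,x)$ --- using here that the constraints (ii) at levels $t \geq \max\sigma$ coalesce, so that $X$ is included in a single minimal $\vec{S}$-interval of $\sigma$ and therefore $\vec{S} \in \C_m(\sigma,X)$, which makes the hypothesis of the infinitary question applicable.

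It remains to count the complexity of the finitary reformulation. For each fixed $t$ the quantifiers over $\vec{h}$, over $\vec{S}$ and over $\tau$ are finitary, contributing only finite conjunctions and disjunctions; the predicates in (i), (ii), together with $\vec{h}$-homogeneity and $\vec{S}$-transitivity, are decidable relative to $X \uh \{0,\dots,t\}$; and by the preliminary observation $(\exists x)(\sigma \cup \tau \nqvdash_m \neg\psi_e(G,x))$ is $\Sigma^0_1(C_{n-1} \oplus \emptyset^{(n)})$ uniformly. So the whole statement is $\Sigma^0_1(X \oplus C_{n-1} \oplus \emptyset^{(n)})$, and since $c \in \PP_n$ gives $X \in \Nc_n$ while $C_{n-1}, \emptyset^{(n)} \in \Nc_n$, the join $X \oplus C_{n-1} \oplus \emptyset^{(n)}$ belongs to the Turing ideal $\Nc_n$; hence $c \qvdash (\exists x)\psi_e(G,x)$ is $\Sigma^0_1(\Nc_n)$. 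I expect the only genuinely delicate step to be the first one: confirming by inspection of the proof of \Cref{lem:question-below-complexity} that the oracle for the inner forcing question can be taken to be $C_{n-1} \oplus \emptyset^{(n)}$, rather than an arbitrary element of $\M_n$, is exactly the point at which the design of the top Scott set $\Nc_n$ is used, while the compactness manipulation --- modulo the small remark about minimal intervals above --- is routine.
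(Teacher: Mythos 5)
Your proposal is correct and follows essentially the same route as the paper: a compactness/K\H{o}nig's-lemma argument to turn the universal second-order quantification over $\vec{h}$ and $\vec{S}$ into a $\Sigma^0_1(X)$ search for a finite threshold $t$, followed by an appeal to the analysis in \Cref{lem:question-below-complexity} showing that the inner relation $\sigma\cup\tau \qvdash_m \neg\psi_e(G,x)$ for the $\Sigma^0_n$ formula $\neg\psi_e$ is $\Pi^0_1(C_{n-1}\oplus\emptyset^{(n)})$ uniformly. If anything, your accounting of the oracle is slightly more careful than the paper's: you explicitly track the dependence on $X$ and conclude from $X, C_{n-1}, \emptyset^{(n)} \in \Nc_n$ that the resulting sentence is $\Sigma^0_1(\Nc_n)$, whereas the paper's proof passes through the unneeded (and not obviously justified) claim $\M_n \subseteq \Nc_n$; your route is the cleaner one.
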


\begin{proof}

Let $m := |\vec{R}|+1$. By a compactness argument, $(c \qvdash (\exists x)\psi_e(G, x))$ holds if there exists $t \in \omega$ such that for every~$m$-tuple of 2-colorings $\vec{h} = h_0, \dots, h_{m-1} \in 2^t$ and every every $m$-tuple of tournaments $\vec{S}$ over $\{0, \dots, t\}$ such that $\vec{S} \in \C_{m}(\sigma,X \cap \{0, \dots, t\})$, there is an $\vec{h}$-homogeneous and  $\vec{S}$-transitive $\tau \subseteq X \cap \{0, \dots ,t \}$ and some $x \in \omega$ such that $\sigma \cup \tau \nqvdash_m \neg \psi_e(G,x)$. The formula $\neg \psi_e(G,x)$ is $\Sigma^0_n$, so by \cref{lem:question-below-complexity}, the formula 
$(\sigma \cup \tau \qvdash_m \neg \psi_e(G,x))$ is $\Pi^0_1(C_{n-1} \oplus \emptyset^{(n)})$ uniformly in its parameters, hence $(\sigma \cup \tau \nqvdash_m \neg \psi_e(G,x))$ is $\Sigma^0_1(\M_n)$. This yields the expected result since~$\M_n \subseteq \Nc_n$.

\end{proof}

The following lemma states that the forcing question on the top meets its specifications, that is, based on its answer, there is an extension forcing the property or its complement.

\begin{lemma}\label[lemma]{lem:question-validity}
     Let $ n \in \omega$ and $c := (\vec{R},\sigma,X) \in \PP_n$. Consider $(\exists x) \psi_e(G,x)$ a $\Sigma_{n+1}^0$ formula.
     \begin{itemize}
         \item If $c \qvdash (\exists x) \psi_e(G,x)$ then $\exists d \leq c$ such that $d \Vdash (\exists x) \psi_e(G,x)$.
         \item If $c \nqvdash (\exists x) \psi_e(G,x)$ then $\exists d \leq c$ such that $d \Vdash (\forall x) \neg \psi_e(G,x)$.
     \end{itemize}
\end{lemma}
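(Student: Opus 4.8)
The plan is to follow the template of the proof of \Cref{lem:question-below-validity}, splitting into a positive and a negative case, the single new ingredient being that every reservoir and every tournament tuple produced along the way must be exhibited as a member of the Scott set~$\Nc_n$; this is exactly what the second clause of \Cref{prop:top-extension} (closure of~$\Nc_n$ under splitting along $T \oplus Z$-computable sets for $Z \in \Nc_n$), together with $\Nc_n \models \WKL$, was designed to deliver. Write $R_0 = T$ and $R_1, \dots, R_{|\vec{R}|}$ for the components of~$\vec{R}$, so the full tournament list of~$c$ is the $m$-tuple $(R_0, \dots, R_{m-1})$ with $m = |\vec{R}| + 1$, and recall that this tuple lies in $\C_m(\sigma, X)$ since $c$ is a condition.

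\emph{Positive case: $c \qvdash (\exists x)\psi_e(G, x)$.} As in the proof of \Cref{lem:question-complexity}, a compactness argument yields a threshold $t$ such that for every $m$-tuple $\vec{h}$ of $2$-colourings of $\{0, \dots, t\}$ and every $\vec{S} \in \C_m(\sigma, X \cap \{0, \dots, t\})$ there are an $\vec{h}$-homogeneous, $\vec{S}$-transitive $\tau \subseteq X \cap \{0, \dots, t\}$ and some $x$ with $\sigma \cup \tau \nqvdash_m \neg\psi_e(G, x)$. For $i < m$ and $a \leq t$ set $g_{i,a}(y) = 1$ iff $a R_i y$; this is a $T \oplus \vec{R}$-computable finite colouring of $X \setminus \{0, \dots, t\}$, so, decomposing it into finitely many $2$-partitions and applying the second clause of \Cref{prop:top-extension} once per partition (each being $T \oplus Z$-computable for $Z := \vec{R} \in \Nc_n$), we obtain an infinite $\vec{g}$-homogeneous set $H \subseteq X \setminus \{0,\dots,t\}$ with $H \in \Nc_n \cap \langle \U_{C_{n-1}}^{\M_{n-1}} \rangle$. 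Putting $h_i(a) = 1$ iff $\{a\} \to_{R_i} H$ and feeding $\vec{h}$ and $\vec{S} = (R_0, \dots, R_{m-1})$ into the threshold property, we get $\tau \subseteq X \cap \{0, \dots, t\}$ that is $(R_0, \dots, R_{m-1})$-transitive and $\vec{h}$-homogeneous together with $x$ such that $\sigma \cup \tau \nqvdash_m \neg\psi_e(G, x)$. The $\vec{h}$-homogeneity of $\tau$ forces, for each $i < m$, either $\tau \to_{R_i} H$ or $H \to_{R_i} \tau$, so \Cref{lem:extension} gives $(R_0, \dots, R_{m-1}) \in \C_m(\sigma \cup \tau, H)$, and $d := (\vec{R}, \sigma \cup \tau, H)$ is a $\PP_n$-condition with $d \leq c$. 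Finally, $\neg\psi_e(G, x)$ is a $\Sigma^0_n$ formula and $\sigma \cup \tau \nqvdash_m \neg\psi_e(G, x)$, so the already-established instance of \Cref{lem:question-below-validity} for $\Sigma^0_n$ formulas — whose proof carries over verbatim inside $\PP_n$ because $\Nc_n$ absorbs the splitting operations it performs — yields $p \leq d$ with $p \Vdash \psi_e(G, x)$, hence $p \Vdash (\exists x)\psi_e(G, x)$.

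\emph{Negative case: $c \nqvdash (\exists x)\psi_e(G, x)$.} Then there are an $m$-tuple $\vec{h} \in 2^X$ of $2$-colourings and some $\vec{S} \in \C_m(\sigma, X)$ such that for every $\vec{h}$-homogeneous, $\vec{S}$-transitive $\tau \subseteq X$ and every $x$ one has $\sigma \cup \tau \qvdash_m \neg\psi_e(G, x)$. By \Cref{lem:question-below-complexity} the relation $\sigma \cup \tau \qvdash_m \neg\psi_e(G, x)$ is $\Pi^0_1(C_{n-1} \oplus \emptyset^{(n)})$ uniformly in its parameters, so the collection of pairs $(\vec{h}, \vec{S})$ with the displayed property is a non-empty $\Pi^0_1$ class relative to $X \oplus C_{n-1} \oplus \emptyset^{(n)} \in \Nc_n$; since $\Nc_n \models \WKL$ we may pick such a pair inside $\Nc_n$. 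Now $\vec{h}$ is an $\Nc_n$-computable finite colouring of $X$, so, decomposing it into $2$-partitions and iterating \Cref{prop:top-extension} as above, we obtain an $\vec{h}$-homogeneous $H \subseteq X$ with $H \in \Nc_n \cap \langle \U_{C_{n-1}}^{\M_{n-1}} \rangle$. One checks that $d := (\vec{R}\vec{S}, \sigma, H)$ is a $\PP_n$-condition: $\vec{R}\vec{S}, H \in \Nc_n$, $H \in \langle \U_{C_{n-1}}^{\M_{n-1}} \rangle$, the transitivity and minimal-interval requirements for $\vec{S}$ follow from $\vec{S} \in \C_m(\sigma, X)$ and $H \subseteq X$, and those for $T$ and $\vec{R}$ are inherited from $c$. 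Moreover $d \leq c$; and since every $\tau \subseteq H$ is $\vec{h}$-homogeneous while every $(T, \vec{R}, \vec{S})$-transitive $\tau$ is in particular $\vec{S}$-transitive, unwinding the definition of the forcing relation for $\Pi^0_{n+1}$ formulas shows $d \Vdash (\forall x)\neg\psi_e(G, x)$.

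\emph{Main obstacle.} The genuine difficulty, as throughout \Cref{sect:layerwise-avoidance}, is that neither the ground tournament $T$ nor the reservoir $X$ is assumed effective, so each reservoir and each new tournament tuple introduced in either case must be produced inside $\Nc_n$; this is secured by the iterated splitting lemma \Cref{prop:top-extension} and by the Scott-set property of $\Nc_n$ (used to select the witnessing pair $(\vec{h}, \vec{S})$ in the negative case). A secondary point, appealed to in the positive case, is that \Cref{lem:question-below-validity} for $\Sigma^0_n$ formulas survives the passage from the base class $\P_\omega$ to $\langle \U_{C_{n-1}}^{\M_{n-1}} \rangle$ together with the added constraint $\vec{R}, X \in \Nc_n$, which is again guaranteed by the closure properties built into $\Nc_n$.
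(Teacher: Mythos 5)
Your proof is correct and follows essentially the same route as the paper: compactness to extract a threshold, restriction to a $\vec{g}$-homogeneous subreservoir via \Cref{prop:top-extension} in the positive case, and in the negative case an appeal to $\Nc_n \models \WKL$ to extract the witnessing pair $(\vec{h},\vec{S})$ from a $\Pi^0_1(\Nc_n)$ class before refining the reservoir. The only minor divergence is in the negative case, where the paper simply observes that the $\vec{h}$-homogeneous cell picked out by partition regularity of $\P$ is $\vec{h}\oplus X$-computable and hence already in $\Nc_n$, rather than routing the refinement through \Cref{prop:top-extension} as you do; both are valid. Your labelling of $T$ as $R_0$ versus the paper's $R_{m-1}$ is cosmetic. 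You also identify correctly, and the paper acknowledges in a remark, that \Cref{lem:question-below-validity} must survive the passage to the $\PP_n$-setting; this is exactly what \Cref{prop:top-extension} was built to ensure.
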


\begin{proof}
Let $m = |\vec{R}|+1$.
For simplicity of notation, let $R_{m-1} = T$.

First, suppose $c \qvdash (\exists x) \psi_e(G,x)$. Then, in particular, for every~$m$-tuple of 2-colorings $\vec{h} = h_0, \dots, h_{m-1} \in 2^X$, there exists an $\vec{h}$-homogeneous, $\vec{R}$-transitive and $T$-transitive $\tau \subseteq X$ and some $x \in \omega$ such that $\sigma \cup \tau \nqvdash \neg \psi_e(G,x)$. By a compactness argument, there exists $t \in \omega$ such that we can restrict the considered set of $m$-tuples of $2$-colorings of $X$ to 2-colorings of $\{0,\dots, t\}$. We again build the same $m$-tuple of 2-colorings as follows:
for every $i < m$, $a \leq t$, and $y \in X$, $y > t$, let $g_{i,a}(y) := 1$ if $R_i(a,y)$ holds, and 0 otherwise. Note that $\vec{g}$ is $T \oplus \vec{R} \oplus X$-computable, hence $T \oplus Z$-computable for some~$Z \in \Nc_n$ (since~$R_{m-1} = T$).
    
     By choice of~$\Nc_n$, with $\P = \langle \U_{C_{n-1}}^{\M_{n-1}} \rangle$, by \Cref{prop:top-extension}, there exists $H \subseteq X$ a $\vec{g}$-homogeneous set in $\Nc_n \cap \langle \U_{C_{n-1}}^{\M_{n-1}}\rangle$. 
    
    For every~$i < m$ and $a \leq t$, let $h_i(a) = 1$ if $\{a\} \to_{R_i} H$, and $0$ otherwise. Now, there exists a finite $\tau \subseteq X \cap \{0,\dots,t\}$ which is $\vec{R}$-transitive, $T$-transitive and $\vec{h}$-homogeneous and some $x \in \omega$ such that $\sigma \cup \tau \nqvdash_m \neg \psi_e(G, x)$. 
    Moreover, by \Cref{lem:extension}, $(\vec{R},T) \in \C_m(\sigma \cup \tau, H) $. This makes $d := (\vec{R},\sigma \cup \tau, H)$ a valid $\PP_n$ condition under $c$ such that $d \nqvdash_m \neg \psi_e(G, x)$, hence, by \Cref{lem:question-below-validity}, there exists $p \leq d \leq c$ such that $p \Vdash \psi_e(G,x)$.

Now, suppose $c \nqvdash (\exists x) \psi_e(G,x)$. Then, there exists an $m$-tuple of 2-colorings $\vec{h} = h_0, \dots, h_{m-1} \in 2^X$ and an $m$-tuple of tournaments $\vec{S} \in \C_m(\sigma,X)$ such that for every $\vec{h}$-homogeneous and $\vec{S}$-transitive finite chain $\sigma \subseteq X$ and every $x \in \omega$, $\sigma \cup \tau \qvdash_m \neg \psi_e(G,x)$. Let $\L$ be the class of such $m$-tuples of 2 colorings $\vec{h}$ and $m$-tuples of $\vec{S} \in \C_m(\sigma,X)$. By \Cref{lem:approx-pi01}, $\C_m(\sigma, X)$ is $\Pi^0_1(X)$. Since $X \in \Nc_n$, the class~$\L$ is $\Pi_1^0(\Nc_n)$, hence, since $\Nc_n \models \WKL$, there exists $(\vec{h}, \vec{S}) \in \Nc_n \cap \L$. By partition regularity of $\P = \langle \U_{C_{n-1}}^{\M_{n-1}} \rangle$, there is a $\vec{h} \oplus X$-computable $\vec{h}$-homogeneous set~$Y \subseteq X$ in~$\P$.
The 3-tuple $d := (\vec{R}\vec{S}, \sigma, Y)$ is a valid $\EM$-condition such that $d \Vdash (\forall x) \neg \psi_e(G,x)$.

\end{proof}

The following diagonalization lemma is a specialization of \Cref{prop-uniform-preservation-sigman} to this notion of forcing.

\begin{lemma}\label[lemma]{lem:layerwise-diag}
  Fix $n > 0$. Let $\F$ be a sufficiently generic $\PP_n$-filter. Then, for every $\Sigma_{n+1}^0$ formula $\varphi(G,x)$, there exists $d \in \F$ such that
    $$ (\exists x \notin B)(d \Vdash  \varphi(G, x)) \ \lor \   (\exists x \in B) (d \Vdash \neg \varphi(G, x)). $$
\end{lemma}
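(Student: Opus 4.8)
The plan is to mirror the proof of \Cref{prop-uniform-preservation-sigman} (and of \Cref{lem:arith-diag}), using the top forcing question instead of the bottom one, and crucially exploiting that the top forcing question is $\Sigma^0_1(\Nc_n)$ while $B$ is not $\Sigma^0_1(\Nc_n)$. First I would fix a condition $c = (\vec{R},\sigma,X) \in \F$ and a $\Sigma^0_{n+1}$ formula $\varphi(G,x)$, and form the set
$$ W = \{\, x \in \omega : c \qvdash \varphi(G,x) \,\}. $$
By \Cref{lem:question-complexity}, the relation $c \qvdash \varphi(G,x)$ is $\Sigma^0_1(\Nc_n)$ uniformly in $x$ (here one uses that $\Nc_n \in \Nc_n$ in the sense that the parameter $X \oplus \vec{R}$ defining $c$ lies in $\Nc_n$, which is part of \Cref{def:top-condition}), so $W$ is a $\Sigma^0_1(\Nc_n)$ set. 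Since $B$ is not $\Sigma^0_1(\Nc_n)$ by the choice of $\Nc_n$ (\Cref{prop:top-extension}), we get $W \neq B$, so we may pick $x \in W \mathbin{\Delta} B$.

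Then I would split into the two cases exactly as in \Cref{prop-uniform-preservation-sigman}. If $x \in W \setminus B$, then $c \qvdash \varphi(G,x)$, so by \Cref{lem:question-validity}(1) there is an extension $d \leq c$ with $d \Vdash \varphi(G,x)$, and $x \notin B$. If $x \in B \setminus W$, then $c \nqvdash \varphi(G,x)$, so by \Cref{lem:question-validity}(2) there is an extension $d \leq c$ with $d \Vdash (\forall x)\neg\psi_e(G,x)$, in particular $d \Vdash \neg\varphi(G,x)$, and $x \in B$. In either case the required disjunction
$$ (\exists x \notin B)(d \Vdash \varphi(G,x)) \ \lor \ (\exists x \in B)(d \Vdash \neg\varphi(G,x)) $$
is witnessed by some $d \leq c$. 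Finally, since for each $\Sigma^0_{n+1}$ formula the set of conditions $d$ satisfying this disjunction (or having no extension below them in $\F$ failing it) is dense below every condition—by the argument just given applied to an arbitrary $c$—a sufficiently generic filter $\F$ meets it, producing the desired $d \in \F$.

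The main subtlety, rather than an obstacle, is making sure the density argument is phrased correctly: what one actually shows dense is the set $\D_\varphi$ of conditions $d$ that already force $\varphi(G,x)$ for some $x \notin B$ or force $\neg\varphi(G,x)$ for some $x \in B$, and the computation above shows $\D_\varphi$ is dense (given any $c$, extend it into $\D_\varphi$). Then $\F$ sufficiently generic meets $\D_\varphi$ for every $\Sigma^0_{n+1}$ formula $\varphi$. One should also double-check the bookkeeping that $\qvdash$ in \Cref{lem:question-validity} and \Cref{lem:question-complexity} refers to the top forcing question (with the implicit $m = |\vec{R}|+1$ and $R_{m-1} = T$ convention), not the parametrized $\qvdash_m$ of the bottom framework; but since a $\PP_n$-condition inherits both relations and \Cref{lem:question-validity} is stated for the top one, this is just a matter of citing the right lemma. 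No genuinely hard step remains: the heavy lifting has already been done in constructing $\Nc_n$ (\Cref{prop:top-extension}) and in verifying the specifications of the top forcing question (\Cref{lem:question-validity}) and its complexity (\Cref{lem:question-complexity}).
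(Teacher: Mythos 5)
Your proposal is correct and matches the paper's proof essentially verbatim: form $W = \{x : c \qvdash \varphi(G,x)\}$, observe it is $\Sigma^0_1(\Nc_n)$ by \Cref{lem:question-complexity} whereas $B$ is not, pick $x \in W \Delta B$, and apply \Cref{lem:question-validity} in each case, with genericity of $\F$ supplying the desired $d \in \F$. The only addition you make is spelling out the density argument a bit more explicitly, which the paper leaves implicit.
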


\begin{proof}
Fix some $c = (\vec{R}, \sigma, X) \in \F$, and let~$\varphi(G, x)$ be a $\Sigma^0_{n+1}$ formula.
Let $W = \{ x :   c \qvdash (\varphi (G, x) \}$. By \cref{lem:question-complexity}, the set $W$ is $\Sigma^0_1(\Nc_n)$. By construction of $\Nc_n$ in \Cref{prop:top-extension}, $B$ is not $\Sigma_1^0(\Nc_n)$, hence, $W \neq B$. Let $x \in W \Delta B = (W \setminus B) \cup ( B \setminus W)$. One of the two cases holds:
\begin{itemize}
    \item $x \in W \setminus B$, then, by \cref{lem:question-validity}, there exists a condition $d \leq c$ such that $d \Vdash \varphi(G, x)$.
    \item $x \in B \setminus W$, then, by \cref{lem:question-validity},   there exists a condition $d \leq c$ such that $d \Vdash \neg \varphi(G, x)$.
\end{itemize}
In both cases, by genericity of~$\F$, there is such a~$d$ in~$\F$.
\end{proof}

We are now ready to prove \Cref{thm:layerwise-main}.

\begin{proof}[Proof of \Cref{thm:layerwise-main}]
The case $n = 0$ is proven independently by the first author and Wang (unpublished) and is a consequence of \Cref{sect:combinatorics-em}.
Fix $n>0$, and fix a non-$\Sigma_{n+1}^0$ set~$B$, a tournament~$T$, and let~$\F$ be a sufficiently generic $\PP_n$-filter containing the condition $(\emptyset, \emptyset, \omega)$ (recall that $T$ is a parameter of the notion of forcing). By \Cref{lem:n-genericity}, $\F$ is $(n-1)$-generic. By \Cref{lem:gf-transitive}, $G_\F$ is $T$-transitive, and by \Cref{lem:1-generic-infinite}, $G_\F$ is infinite.

We claim that $B$ is not $\Sigma^0_{n+1}(G_\F)$:
fix a $\Sigma^0_{n+1}$ formula $\varphi(G, x)$. By \Cref{lem:layerwise-diag},
there exists some~$c \in \F$ such that 
$$ (\exists x \notin B)(c \Vdash \varphi(G, x)) \ \lor \   (\exists x \in B) (c \Vdash \neg \varphi(G, x)). $$
By \Cref{lem:forcing-implies-truth}, since $\F$ is $(n-1)$-generic, $(\exists x \notin B)\varphi(G_\F, x) \lor  (\exists x \in B) \neg \varphi(G_\F, x)$, hence $B$ is not  $\Sigma^0_n(G_\F)$.
\end{proof}

\section{Effective constructions and lowness}\label[section]{sect:effective-constructions}

This last section of our article is devoted to the proof of the third main theorem:

\begin{repmaintheorem}{thm:effective-main}
Fix $n \geq 1$. Every $\Delta^0_n$ tournament~$T$ has an infinite transitive subtournament of low${}_{n+1}$ degree.
\end{repmaintheorem}

First of all, notice that this bound is tight, in that there exists a computable tournament with no infinite $\Sigma^0_2$ transitive subtournament (see  Patey~\cite{patey2015somewhere}). By relativizing the argument, for every~$n \geq 1$, there is a $\Delta^0_n$ tournament with no infinite $\Sigma^0_{n+1}$ transitive subtournament, hence no infinite low${}_n$ transitive subtournament.
We will actually prove the following stronger theorem:

\begin{theorem}\label[theorem]{thm:effective-strong-main}
Fix $n \geq 1$. For every set~$P$ of PA degree over $\emptyset^{(n)}$, every $\Delta^0_n$ tournament~$T$ has an infinite transitive subtournament~$H$ such that $H^{(n)} \leq_T P$.
\end{theorem}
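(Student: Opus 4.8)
The plan is to prove \Cref{thm:effective-strong-main} (which gives \Cref{thm:effective-main}: relativizing the low basis theorem to $\emptyset^{(n)}$ produces a $P$ of PA degree over $\emptyset^{(n)}$ with $P' \leq_T \emptyset^{(n+1)}$, and then $H^{(n+1)} = (H^{(n)})' \leq_T P' \leq_T \emptyset^{(n+1)}$) by effectivizing the forcing of \Cref{sect:layerwise-avoidance}. The key extra hypothesis is that $T$ is $\Delta^0_n$, hence $\emptyset^{(n-1)}$-computable; this lets us absorb $T$ into every Turing ideal containing $\emptyset^{(n-1)}$, and in particular makes \Cref{prop:dzhafarov-jockusch} unnecessary here, since any set splitting a reservoir along a $T$-computable partition is automatically available inside such an ideal.

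First I would fix the spine and a top Scott set. Keep $\M_0,\dots,\M_{n-2}$ and their classes $\U^{\M_k}_{C_k}$ from \Cref{mn-seq} and \Cref{cn-seq}, and put $\P = \langle\U^{\M_{n-2}}_{C_{n-2}}\rangle$ (for $n=1$ there is no spine and $\P$ is the class of all infinite sets). In place of $\M_{n-1}$, use a Scott set $\Nc$, coded by a set $N$, such that $\emptyset^{(n-1)}\in\Nc$ (hence $T\in\Nc$), $N$ is low over $\emptyset^{(n-1)}$ (i.e.\ $N'\leq_T\emptyset^{(n)}$), $\Nc$ contains codes $C_0,\dots,C_{n-2}$ as in \Cref{cn-seq}, and every nonempty $\Pi^0_1(Z)$ class with $Z\in\Nc$ has a member uniformly extractable from $N$. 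Such an $\Nc$ is produced by iterating the low basis theorem relative to $\emptyset^{(n-1)}$ along paths through universal $\Pi^0_1$ trees, exactly as in \Cref{mn-seq} and \Cref{prop:top-extension} but without the cone-avoidance clause. The notion of forcing then has conditions $(\vec R,\sigma,X)$ with $(\sigma,X)$ an $\EM$-condition for $T$ and for every $R\in\vec R$, with $X\in\P\cap\Nc$ and $\vec R\in\Nc$; the forcing relation, the framework forcing question $\qvdash_m$ and the top forcing question are exactly those of \Cref{sect:forcing-framework} and \Cref{sect:layerwise-avoidance}, with $\Nc$ playing the role of $\Nc_n$. By the argument of \Cref{lem:question-complexity}, the top forcing question for $\Sigma^0_n$ formulas is $\Sigma^0_1(\Nc)$, and for $k\leq n-1$ the framework question for $\Sigma^0_k$ formulas is $\Pi^0_1(\Nc)$; since $N'\leq_T\emptyset^{(n)}$, both are decidable from $\emptyset^{(n)}$. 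The analogues of \Cref{lem:question-below-validity}, \Cref{lem:question-validity}, \Cref{lem:forcing-implies-truth} and \Cref{lem:1-generic-infinite} go through essentially verbatim, the closure properties of $\Nc$ replacing the appeals to \Cref{prop:dzhafarov-jockusch}.

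Next I would run the effective construction. Fix $P$ of PA degree over $\emptyset^{(n)}$; since $P$ is then also of PA degree over every set reducible to $\emptyset^{(n)}$ (in particular over $M_k'$ for $k\leq n-2$), $P$ computes $N$, the codes $C_0,\dots,C_{n-2}$, and $\emptyset^{(n)}$. Build a descending sequence $c_0=(\emptyset,\emptyset,\omega)\geq c_1\geq\cdots$ $P$-computably, alternating two kinds of steps: (i) extend $\sigma$ by a fresh reservoir element, so that $G_\F$ is infinite as in \Cref{lem:1-generic-infinite}; (ii) at stage $e$, for each $k\leq n$, decide the $e$-th $\Sigma^0_k$ formula by querying the relevant forcing question (decidable from $\emptyset^{(n)}\leq_T P$) and then, by the matching validity lemma, passing to an extension forcing that formula or its negation. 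The non-uniform steps inside the validity lemmas are the choice of the new reservoir --- a member of a nonempty $\Pi^0_1(X\oplus T\oplus\vec R)$ class, or one among finitely many homogeneity classes of $X$ lying in $\P$ --- and the choice of a new tournament inside the nonempty $\Pi^0_1(X)$ class $\C_\ell(\sigma,X)$ of \Cref{lem:approx-pi01}. The first and third are done by picking a path through a nonempty $\Pi^0_1(\emptyset^{(n)})$ class, which $P$ can do; the ``lies in $\P$'' choice, after unwinding the index of the $\M_{n-2}$-minimal class $\P$, is the selection of the true one among finitely many sentences that one arranges to be $\Pi^0_2(\emptyset^{(n-1)})$, which $P$ can also make by the relativization of Cholak--Jockusch--Slaman~\cite[Lemma~4.2]{cholak_jockusch_slaman_2001} already used in \Cref{sect:minimal-cohesive-classes}. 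Then $H:=G_\F$ is an infinite $T$-transitive subtournament by the forcing lemmas, and $H^{(n)}\leq_T P$ because, by \Cref{lem:forcing-implies-truth}, each $\Sigma^0_n(H)$ fact equals the recorded answer of a forcing question, and the whole record is $P$-computable.

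The hard part is the definitional-complexity bookkeeping. One must keep every object produced along the construction --- reservoirs, the codes $C_k$, the added tournaments, the set $N$ --- ``low over $\emptyset^{(n-1)}$'' enough that (a) the forcing question for $\Sigma^0_k(G)$ formulas with $k\leq n$ stays at the level of $\Nc$ (so $\emptyset^{(n)}$-decidable, rather than merely $\emptyset^{(n+1)}$-decidable), and (b) the combinatorial choices needed to produce condition extensions stay within the power of a single PA degree over $\emptyset^{(n)}$ instead of over $\emptyset^{(n+1)}$. The delicate point is the interaction of (b) with the $\M_{n-2}$-minimality of $\P$: because the index set of a minimal class is roughly as complex as $\emptyset^{(n)}$, one cannot afford to test ``$Y\in\P$'' for the reservoir $Y$ by a literal $\Pi^0_1(\emptyset^{(n)})$ computation, so the reservoir constraint must be handled incrementally --- mirroring the level-by-level construction of the minimal class --- so that at each step selecting the correct homogeneity class remains a choice among finitely many $\Pi^0_2(\emptyset^{(n-1)})$ sentences. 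Verifying that this can be done while respecting the $\EM$-combinatorics of \Cref{lem:extension} is where the real work of the section lies.
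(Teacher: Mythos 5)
Your overall strategy matches the paper's: effectivize the layerwise forcing, exploiting that $T$ is $\Delta^0_n$ and hence can be absorbed into the Scott set used for reservoirs and added tournaments, so the cone-avoidance machinery of Section 6 (\Cref{prop:dzhafarov-jockusch} and the $\Nc_n$ of \Cref{prop:top-extension}) becomes unnecessary, and so does distinguishing $T$ from $\vec R$. This is exactly the observation the paper's Section 7 is built on.

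Where you diverge is the choice of spine and top Scott set, and this is more than a cosmetic shift. You set $\P = \langle\U^{\M_{n-2}}_{C_{n-2}}\rangle$ and manufacture a fresh Scott set $\Nc$ low over $\emptyset^{(n-1)}$ to replace $\M_{n-1}$, putting the ``top'' question at the $\Sigma^0_n$ level. The paper, by contrast, does not replace any level: it keeps $\P = \langle\U^{\M_{n-1}}_{C_{n-1}}\rangle$, reuses \Cref{mn-seq}'s $\M_n$ with the lowness clause weakened to ``$M_n\leq_T P$'', and puts the top question at the $\Sigma^0_{n+1}$ level. The payoff of the paper's choice is that every $\Pi^0_1$ class that arises in producing extensions --- the class $\P_X$ of \Cref{lem:effective-pigeonhole} pre-packaging all ``lies-in-$\P$'' decisions, the class $\L$ of failure witnesses in \Cref{lem:effective-question-below-validity}, the class of colorings-and-tournaments in \Cref{lem:effective-question-validity} --- is $\Pi^0_1(\M_n)$, so since $\M_n$ is a Scott set computable from $P$, a member together with its $M_n$-index is extracted in a single uniform step, with no separate $\Pi^0_2$-sentence selection ever performed.

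In your setup this is precisely where there is a gap. The ``$X\cap Z_i\in\U^{\M_{n-2}}_{C_{n-2}}$'' statements are $\Pi^0_1(C_{n-2}\oplus(X\oplus M_{n-2})')$, which for $X\in\Nc$ is $\Pi^0_1(C_{n-2}\oplus N')$; and $N'\geq_T\emptyset^{(n)}$ lies strictly outside $\Nc$, so the class encoding these decisions is $\Pi^0_1(\emptyset^{(n)})$ but \emph{not} $\Pi^0_1(\Nc)$, and $\Nc$ is not closed under taking paths through it. You compensate by an incremental Cholak--Jockusch--Slaman selection at each step, and you correctly flag that carrying this through coherently with the $\M_{n-2}$-minimality of $\P$ and the $\EM$-combinatorics is ``the hard part'' --- but you leave it unresolved, and that is exactly the bookkeeping the paper's larger $\M_n$ short-circuits (via a single path $H\in[T_X]$ in \Cref{lem:effective-pigeonhole} rather than a stream of $\Pi^0_2(\emptyset^{(n-1)})$ queries). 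So: right approach, genuinely different parametrization (one spine level cheaper, if it can be made to work), but the crux of the effective bookkeeping is identified rather than executed.
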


\Cref{thm:effective-main} follows from \Cref{thm:effective-strong-main} using the low basis theorem:

\begin{proof}[Proof of \Cref{thm:effective-main}]
Fix $n \geq 1$ and a $\Delta^0_n$ tournament~$T$. By the low basis theorem relative to~$\emptyset^{(n)}$ (see Jockusch and Soare~\cite{jockusch197classes}), there is a set~$P$ of PA degree over~$\emptyset^{(n)}$ such that $P' \leq_T \emptyset^{(n+1)}$. By \Cref{thm:effective-strong-main}, there is an infinite $T$-transitive subtournament~$H$ such that $H^{(n)} \leq_T P$. In particular, $H^{(n+1)} \leq_T P' \leq_T \emptyset^{(n+1)}$, hence $H$ is of low${}_{n+1}$ degree.
\end{proof}

The rest of this section is therefore devoted to the proof of \Cref{thm:effective-strong-main}.
The goal will be to construct, given a $\Delta^0_n$ tournament~$T$ and a set~$P$ of PA degree over $\emptyset^{(n)}$, an infinite decreasing sequence of conditions $c_0 = (T, \emptyset, \omega) \geq c_1 \geq \dots$ such that the induced filter~$\F = \{ d : (\exists n) c_n \leq d \}$ is $n$-generic. Then, the set~$G_\F$ be will be an infinite $T$-transitive subtournament such that $G_\F^{(n)} \leq_T P$.

We will work with a notion of forcing~$\QQ_n$ which is very similar to~$\PP_n$, with the same distinction between the forcing question of the top and the ones at the lower levels. The main difference between~$\QQ_n$ and~$\PP_n$ comes from two facts:
\begin{itemize}
    \item The tournament~$T$ is $\Delta^0_n$, hence belongs to~$\M_n$. There is therefore no need to distinguish the original tournament~$T$ from the sequence of tournaments~$\vec{R}$ obtained with the question of forcing.
    \item The resulting filter will be $n$-generic, but there will be no diagonalization against a fixed non-$\Sigma^0_{n+1}$ set~$B$. We therefore does not require that a set~$B$ is not $\Sigma^0_1(\M_n)$.
\end{itemize}
Because of these differences, there is no need to use a different Scott set at the level~$n$. We will therefore keep~$\M_n$ instead of replacing it with~$\Nc_n$. Note that any PA degree over~$\emptyset^{(n)}$ can compute a sequence of sets~$M_0, M_1, \dots, M_n$ satisfying the properties of \Cref{mn-seq}, except that the last set~$M_n$ is not required to satisfy the last item, but simply to be $P$-computable.

\begin{definition}\label[definition]{def:top-condition}
    Let $n > 0$. Let $\QQ_n$ denote the set of all $\PP_\P$-conditions $(\vec{R}, \sigma, X)$ for~$\P = \langle \U_{C_{n-1}}^{\M_{n-1}}\rangle$ such that $\vec{R}, X \in \M_n$.
\end{definition}

In order to analyze the computational power needed to construct an $n$-generic decreasing sequence of conditions, one must fix a coding of the conditions into finite objects. Since~$\M_n = \{ Z_e : e \in \NN \}$ is countably coded by the set~$M_n = \bigoplus_{e \in \omega} Z_e$, every element~$X \in \M_n$ can be represented by an integer~$e$ such that $X = Z_e$. We call such an~$e$ an \emph{$M_n$-index} of~$X$. Note that any set~$X \in \M_n$ can be represented by infinitely many $M_n$-indices.

\begin{definition}
Let $c := (\Vec{R}, \sigma,X)$. A \emph{$\QQ_n$-index} of $c$ is a 3-tuple $\langle e_R,\sigma,e_X \rangle$ such that $Z_{e_X} = X$, i.e. $e_X$ is an $M_n$-index of $X$, and such that $\Phi_{e_R}^{\emptyset^{(n)}}(i,a,b) = R_i(a,b)$.
\end{definition}

In what follows, fix some~$n \geq 1$ and a set~$P$ of PA degree over~$\emptyset^{(n)}$ computing~$M_n$.

\begin{lemma}\label[lemma]{lem:effective-ownership-pr}
The statement \qt{$X \in \U^{\M_{n-1}}_{C_{n-1}}$} is $\Pi^0_1(C_{n-1} \oplus (X \oplus M_{n-1})')$
uniformly in~$X$. In particular, if~$X \in \M_{n-1}$, then it is $\Pi^0_1(C_{n-1} \oplus M_{n-1}')$ uniformly in an~$M_n$-index of~$X$.
\end{lemma}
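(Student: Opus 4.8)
The plan is to simply unfold the definition of membership in $\U^{\M_{n-1}}_{C_{n-1}}$ and keep careful track of the definitional complexity; the only genuine point is to check that, after relativizing the inner matrix to the jump, the leading universal quantifier over the index set contributes a single $\Pi^0_1$ layer rather than pushing the whole sentence up to $\Pi^0_2$.

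First I would recall that, writing $\M_{n-1} = \{X_i\}_{i \in \omega}$ with $M_{n-1} = \bigoplus_i X_i$, by definition $X \in \U^{\M_{n-1}}_{C_{n-1}}$ holds iff for every $(e,i) \in C_{n-1}$ we have $X \in \U^{X_i}_e$, i.e.\ iff
$$ (\forall (e,i))\bigl[(e,i) \in C_{n-1} \rightarrow (\exists \sigma)(\sigma \in W^{X_i}_e \wedge F_\sigma \subseteq X)\bigr]. $$
Here ``$F_\sigma \subseteq X$'' is decidable from $X$ uniformly in $\sigma$, since it is a bounded check over the finite set $F_\sigma$; ``$\sigma \in W^{X_i}_e$'' is $\Sigma^0_1(X_i)$ uniformly in $e,i,\sigma$; and $X_i$ is the $i$-th column of $M_{n-1}$, hence $X_i \leq_T M_{n-1}$ uniformly in $i$. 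Consequently the matrix $(\exists \sigma)(\sigma \in W^{X_i}_e \wedge F_\sigma \subseteq X)$ is $\Sigma^0_1(X \oplus M_{n-1})$ uniformly in $(e,i)$.

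Next I would observe that, with oracle $(X \oplus M_{n-1})'$, this $\Sigma^0_1(X \oplus M_{n-1})$ matrix becomes decidable uniformly in $(e,i)$, while ``$(e,i) \in C_{n-1}$'' is decidable from $C_{n-1}$; hence the bracketed implication is computable in $C_{n-1} \oplus (X \oplus M_{n-1})'$ uniformly in $(e,i)$. The whole statement is therefore a single universal number quantifier over a predicate decidable in $C_{n-1} \oplus (X \oplus M_{n-1})'$, so it is $\Pi^0_1(C_{n-1} \oplus (X \oplus M_{n-1})')$. Since every step above is uniform and $X$ appears only as an oracle (never in an index), this bound is uniform in $X$, which is the first assertion.

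Finally, for the ``in particular'' clause: if $X \in \M_{n-1}$ then $X$ is a column of $M_{n-1}$, so $X \leq_T M_{n-1}$ uniformly in an index for $X$, whence $X \oplus M_{n-1} \equiv_T M_{n-1}$ and $(X \oplus M_{n-1})' \equiv_T M_{n-1}'$ uniformly in that index. Substituting into the bound above yields that ``$X \in \U^{\M_{n-1}}_{C_{n-1}}$'' is $\Pi^0_1(C_{n-1} \oplus M_{n-1}')$ uniformly in an index for $X$ as an element of $\M_{n-1}$. I do not expect any serious obstacle here; the one thing to be careful about is the bookkeeping in the third step, namely that relativizing the $\Sigma^0_1(X \oplus M_{n-1})$ matrix to the jump collapses it to a decidable predicate, so that the leading $\forall(e,i)$ adds only one $\Pi^0_1$ quantifier.
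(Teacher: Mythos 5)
Your proof is correct and takes essentially the same route as the paper's: unfold membership in $\U^{\M_{n-1}}_{C_{n-1}}$ as a universal quantifier over $(e,i)$ with a $\Sigma^0_1(X \oplus M_{n-1})$ matrix, relativize the matrix to $(X\oplus M_{n-1})'$ to make it $\Delta^0_1$, and observe that the leading $\forall(e,i)$ contributes a single $\Pi^0_1$ layer over $C_{n-1} \oplus (X\oplus M_{n-1})'$. One small bookkeeping remark: for the ``in particular'' clause, your argument establishes the bound uniformly in an $M_{n-1}$-index of $X$ (since it is $M_{n-1}$ that decodes $X$ and lets you collapse $X\oplus M_{n-1}$ to $M_{n-1}$), whereas the lemma as stated in the paper says ``$M_n$-index''; given that $M_n \not\leq_T M_{n-1}'$, a bare $M_n$-index would not literally yield the advertised oracle $C_{n-1}\oplus M_{n-1}'$, so the reading you chose is the one that makes the argument go through and is presumably what is meant.
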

\begin{proof}
The sentence 
 \qt{$X \in \U_{C_{n-1}}^{\M_{n-1}}$} is equivalent to the formula~$(\forall (e,i) \in C_{n-1})( X \in \U_e^{Z_i})$, where $M_{n-1} = \bigoplus_{i \in \omega} Z_i$. In other words, the sentence  \qt{$X \in \U_{C_{n-1}}^{\M_{n-1}}$} is equivalent to
 $$\forall e \forall i,\ (e, i) \not \in C_{n-1} \vee (\exists \rho \subseteq X) \rho \in W^{Z_i}_e
 $$
 The left-hand side of the disjunction is $\Delta^0_0(C_{n-1})$, and the right-hand side is $\Sigma^0_1(X \oplus M_{n-1})$, hence $\Delta^0_0((X \oplus M_{n-1})')$.
 The whole sentence is therefore $\Pi_1^0(C_{n-1} \oplus (X \oplus M_{n-1})')$ uniformly in $X$.
\end{proof}

Fix a set~$Z$ and a sequence of pairs of $\Pi^0_1(Z)$ formulas $(\varphi_s, \psi_s)_{s \in \NN}$ such that for every~$s$, at least one of $\varphi_s$ and $\psi_s$ is true. It is well-known that any set~$P$ of PA degree relative to~$Z$ computes a set~$H$ such that for every~$s$, if~$s \in H$ then $\varphi_s$ is true, and if $s \not \in H$ then $\psi_s$ is true. Combining this fact with \Cref{lem:effective-ownership-pr}, we obtain the following lemma:

\begin{lemma}\label[lemma]{lem:effective-pigeonhole}
Let~$X \in \M_n \cap \U^{\M_{n-1}}_{C_{n-1}}$ and $f : X \to 2$ be a 2-coloring in~$\M_n$.
Then there is some~$f$-homogeneous set~$Y \subseteq X$ such that $Y \in \M_n \cap \U^{\M_{n-1}}_{C_{n-1}}$. Moreover, an $M_n$-index of~$Y$ can be $P$-uniformly from $M_n$-indices of~$X$ and and~$f$.
\end{lemma}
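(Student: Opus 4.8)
The plan is to split the statement into its existence part and its uniformity (``moreover'') part, the second being where $P$ actually enters.

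For existence, observe that $f^{-1}(0) \cap X$ and $f^{-1}(1) \cap X$ form a $2$-cover of $X$, and that $X$ belongs to the partition regular class $\U_{C_{n-1}}^{\M_{n-1}}$; hence one of these two sets, say $Y := f^{-1}(i) \cap X$ for some $i < 2$, belongs to $\U_{C_{n-1}}^{\M_{n-1}}$. By construction $Y$ is $f$-homogeneous and $Y \subseteq X$, and since $Y \leq_T X \oplus f$ while $X, f \in \M_n$ and $\M_n$ is a Turing ideal, $Y \in \M_n$. Thus $Y \in \M_n \cap \U_{C_{n-1}}^{\M_{n-1}}$, as required.

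For the ``moreover'', fix $M_n$-indices $e_X$ of $X$ and $e_f$ of $f$. Assuming, as one may after a harmless re-indexing, that the coding $\M_n = \{Z_e : e \in \omega\}$ is uniformly closed under finite joins and Turing functionals, one computes from $e_X, e_f$ (and $j < 2$) an $M_n$-index $e_j$ of $f^{-1}(j) \cap X$. It then remains for $P$ to select the correct $i$, i.e.\ a $j < 2$ with $f^{-1}(j) \cap X \in \U_{C_{n-1}}^{\M_{n-1}}$; by partition regularity at least one such $j$ exists. By \Cref{lem:effective-ownership-pr}, for each $j < 2$ the statement ``$f^{-1}(j) \cap X \in \U_{C_{n-1}}^{\M_{n-1}}$'' is $\Pi^0_1(C_{n-1} \oplus M_{n-1}')$ uniformly in $e_j$. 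Since $M_{n-1}' \leq_T \emptyset^{(n)}$ by \Cref{mn-seq}, and since the auxiliary data fixed at the start of this section may be chosen so that $P$ is of PA degree over $\emptyset^{(n)}$ and hence over $C_{n-1} \oplus M_{n-1}'$, the fact recalled immediately before the lemma applies: $P$ computes, uniformly, a selection of a true alternative among the pair of $\Pi^0_1(C_{n-1} \oplus M_{n-1}')$ formulas indexed by $j < 2$. This yields $i$, and therefore the $M_n$-index $e_i$ of $Y$, uniformly in $e_X, e_f$.

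The step requiring the most care — and really the only nontrivial one — is the complexity accounting just used: one must ensure the membership query ``$\,\cdot \in \U_{C_{n-1}}^{\M_{n-1}}$'' is $\Pi^0_1$ relative to a set over which $P$ genuinely has PA degree. Concretely, the jump $(\,\cdot \oplus M_{n-1})'$ produced by \Cref{lem:effective-ownership-pr} must reduce to $\emptyset^{(n)}$, which forces one to apply that lemma to sets simple enough for this — e.g.\ sets lying in $\M_{n-1}$, so that the jump is $(\,\cdot \oplus M_{n-1})' \leq_T M_{n-1}' \leq_T \emptyset^{(n)}$ rather than $\equiv_T \emptyset^{(n+1)}$, the latter being out of reach of a set that is merely of PA degree over $\emptyset^{(n)}$, and in particular of a set of low degree over $\emptyset^{(n)}$ as invoked in the proof of \Cref{thm:effective-main}. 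Once this is in place, everything else is routine: partition regularity of $\U_{C_{n-1}}^{\M_{n-1}}$ supplies the homogeneous subset, and closure of the Turing ideal $\M_n$ places it in $\M_n$.
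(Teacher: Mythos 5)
Your approach to the ``moreover'' clause differs from the paper's and has a genuine gap, one that your own closing paragraph very nearly diagnoses. You try to decide, directly via $P$'s PA property, which of the two $\Pi^0_1$ alternatives ``$f^{-1}(j) \cap X \in \U_{C_{n-1}}^{\M_{n-1}}$'' for $j < 2$ holds. But \Cref{lem:effective-ownership-pr} only bounds that statement by $\Pi^0_1\bigl(C_{n-1} \oplus (Y \oplus M_{n-1})'\bigr)$ for $Y = f^{-1}(j) \cap X$; the simplification to $\Pi^0_1(C_{n-1} \oplus M_{n-1}')$ requires $Y \leq_T M_{n-1}$, i.e.\ $Y \in \M_{n-1}$. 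Here $Y$ is only guaranteed to lie in $\M_n$, since both $X$ and $f$ live in $\M_n$ and not $\M_{n-1}$. The jump $(Y \oplus M_{n-1})'$ may therefore be as complex as $M_n' \geq_T P'$, which is wildly out of reach for $P$ as a PA-selection oracle. You state exactly this constraint — ``one must apply that lemma to sets simple enough\ldots e.g.\ sets lying in $\M_{n-1}$'' — but never verify that $f^{-1}(j) \cap X$ satisfies it, and in fact it does not. (There is also a secondary issue: ``$P$ is of PA degree over $\emptyset^{(n)}$ and hence over $C_{n-1} \oplus M_{n-1}'$'' is not an implication, since $C_{n-1}$ is only known to belong to $\M_n$ and need not be computable in $\emptyset^{(n)}$.)

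The paper sidesteps the need for $P$ to have PA strength over anything at all. Rather than answering the two per-$f$ alternatives by a PA-selection, it bundles the selection problem for \emph{all} columns $Z_i$ of $M_n$ simultaneously into a single class $\P_X$ of selectors $H$, shows this class is $\Pi^0_1(\M_n)$, and then invokes the Scott set property of $\M_n$ to extract a member $H \in \M_n$ together with an $M_n$-index, all of this by pure index manipulation that does not consult $P$. The oracle $P$ enters only at the very end, to evaluate the $M_n$-index of $H$ (read the bit of $H$ at the $M_n$-index of $f$), which requires nothing beyond $M_n \leq_T P$. Your existence argument for $Y$ is fine; the uniformity argument needs to be restructured along these lines rather than via a direct PA-selection.
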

\begin{proof}
Let~$\P_X$ be the class of all~$H$ such that for every~$i$, if~$i \in H$ then $X \cap Z_i \in \U^{\M_{n-1}}_{C_{n-1}}$, and if~$i \not \in H$, then $X \setminus Z_i \in \U^{\M_{n-1}}_{C_{n-1}}$. By partition regularity of $\U^{\M_{n-1}}_{C_{n-1}}$ and since~$X \in \U^{\M_{n-1}}_{C_{n-1}}$, then $\P_X$ is non-empty. Moreover, by \Cref{lem:effective-ownership-pr}, the class $\P_X$ is $\Pi^0_1(C_{n-1} \oplus M_{n-1}')$ uniformly in an $M_n$-index of~$X$, hence is $\Pi^0_1(\M_n)$.
Thus, given an $M_n$-index of~$X$, one can find an $M_n$-index of a tree~$T_X$ such that $[T_X] = \P_X$, and of a member~$H \in [T_X]$,
and given an $M_n$-index $i_f$ of~$f$, one can $H$-decide whether~$X \cap f^{-1}(0)$ or $X \cap f^{-1}(1)$ belongs to $\U^{\M_{n-1}}_{C_{n-1}}$ and thus compute an $M_n$-index of the corresponding set.

Note that almost all the operations are manipulations of codes, hence do not use the oracle~$P$. The only place it is used is when deciding whether~$X \cap f^{-1}(0)$ or $X \cap f^{-1}(1)$ belongs to $\U^{\M_{n-1}}_{C_{n-1}}$. Indeed, it requires to \qt{evaluate} the $M_n$-index of~$H$ into its actual set, thanks to the oracle~$M_n$ which is~$P$-computable.
\end{proof}

The following two lemmas analyze the uniformity of the forcing questions at the lower levels and at the top level.

\begin{lemma}\label[lemma]{lem:effective-question-below-validity}

In \Cref{lem:question-below-validity}, a $\QQ_n$-index of an extension~$d$ can be found~$P$-uniformly from a $\QQ_n$-index of~$c$ and the $\Sigma^0_{k+1}$ formula~$(\exists x)\psi_e(G, x)$ for~$k < n$.
\end{lemma}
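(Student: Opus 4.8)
The plan is to trace through the proof of \Cref{lem:question-below-validity} and verify that every step performs only computable manipulations of codes, plus a bounded number of appeals to the two effective subroutines already isolated: \Cref{lem:effective-pigeonhole} (the effective pigeonhole giving homogeneous sets inside $\M_n \cap \U^{\M_{n-1}}_{C_{n-1}}$) and the effective evaluation of the forcing question at lower levels. First I would set up the induction on $k < n$, mirroring the inductive structure of \Cref{lem:question-below-validity} itself, with the base case $k=0$ and the inductive step $k>0$ treated separately but in parallel.

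In the case $\sigma \qvdash_m (\exists x)\psi_e(G,x)$: from a $\QQ_n$-index of $c = (\vec{R},\sigma,X)$ one recovers $\sigma$, $m = |\vec{R}|$, and $M_n$-indices for $X$ and for $\vec{R}$. The compactness argument produces a threshold $t$; since the forcing question $(\sigma \qvdash_m (\exists x)\psi_e(G,x))$ is $\Pi^0_1(\M_n)$ by \Cref{lem:question-below-complexity} and $\M_n$ is $P$-computable, searching for a valid $t$ is a $\emptyset^{(n)}$- (hence $P$-) computable search, so $t$ is found $P$-uniformly. The auxiliary colorings $g_{i,a}$ for $i < m$, $a \le t$ are defined directly from $\vec{R}$ (dummy tournaments for the indices $\ge |\vec{R}|$, which are fixed and computable), so an $M_n$-index for the tuple $\vec{g}$ is obtained computably from the given data. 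Then \Cref{lem:effective-pigeonhole} yields an $M_n$-index for a $\vec{g}$-homogeneous $H \subseteq X$ in $\M_n \cap \U^{\M_{n-1}}_{C_{n-1}}$, $P$-uniformly. The derived colorings $h_i(a) = 1$ iff $\{a\} \to_{R_i} H$ require evaluating $R_i$ and $H$ on finitely many inputs, which is a $P$-computable (in fact $M_n$-computable) operation given the indices. Finally one searches (a bounded/finite search over $\tau \subseteq X \cap \{0,\dots,t\}$ and $x \in \omega$, where the $\Sigma^0_1(\M_n)$ or, for $k>0$, $\Sigma^0_1(\M_{n})$-type condition $\sigma \cup \tau \nqvdash_\ell \neg\psi_e(G,x)$ is $P$-decidable by the induction hypothesis for $k>0$ and directly for $k=0$) for a witnessing $\tau$; this produces the condition $(\vec{R}, \sigma\cup\tau, H)$, and for $k>0$ one then invokes the induction hypothesis to descend from $d$ to the final $p \leq d$ forcing $\psi_e(G,x)$, all $P$-uniformly. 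In the case $\sigma \nqvdash_m (\exists x)\psi_e(G,x)$: the failure is witnessed by a finite partition $Y_0,\dots,Y_s$ of $\omega$; since the failure of the $\Pi^0_1(\M_n)$ largeness sentence is $\Sigma^0_1(\M_n)$, such a partition together with its witnessing data ($M_n$-indices for the $\vec h$, the $\vec S \in \C_m(\sigma,Y_i)$) is found by a $P$-computable search. Then one uses effective pigeonhole twice — once to select $i \le s$ with $X \cap Y_i$ still in $\U^{\M_{n-1}}_{C_{n-1}}$ (decidable $P$-computably via \Cref{lem:effective-ownership-pr}), and once to extract an $\vec h$-homogeneous $H \subseteq X \cap Y_i$ in $\M_n \cap \U^{\M_{n-1}}_{C_{n-1}}$ — and the extension is $(\vec R \vec S, \sigma, H)$, with a $\QQ_n$-index computed from all the indices in hand.

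The main obstacle I anticipate is bookkeeping rather than a genuine mathematical difficulty: one must be careful that every object named in the proof of \Cref{lem:question-below-validity} — the threshold $t$, the partition $Y_0,\dots,Y_s$, the tuples $\vec h$ and $\vec S$, the witnessing $\tau$ and $x$, and the homogeneous sets $H$ — is either a finite object searched for against a $P$-decidable (or $\emptyset^{(n)}$-decidable) predicate, or is produced by \Cref{lem:effective-pigeonhole}, and that the $\Pi^0_1(\Nc_n)$-versus-$\Pi^0_1(\M_n)$ subtleties do not arise here since $\QQ_n$ uses $\M_n$ rather than $\Nc_n$. In particular one must check that the index of $\vec S \in \C_m(\sigma,Y_i)$ can be found: since $\C_m(\sigma, Y_i)$ is $\Pi^0_1(Y_i)$ by \Cref{lem:approx-pi01} and $\M_n \models \WKL$ with $M_n$ being $P$-computable, a member of $\C_m(\sigma,Y_i)$ lying in $\M_n$ has an $M_n$-index found $P$-uniformly. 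Assembling the final $\QQ_n$-index $\langle e_R, \sigma\cup\tau, e_H\rangle$ from the pieces is then immediate: $e_H$ comes from \Cref{lem:effective-pigeonhole}, and $e_R$ for the enlarged tournament sequence $\vec R \vec S$ is obtained by combining the $\emptyset^{(n)}$-indices (recalling that $\emptyset^{(n)} \in \M_n$). I would close the argument by remarking that the number of appeals to $P$ is bounded by the length of $\psi_e$ together with $n$, so the whole procedure is genuinely $P$-uniform.
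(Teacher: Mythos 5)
Your proposal is correct and matches the paper's proof almost step for step: the same case split on the forcing question, the same appeals to \Cref{lem:effective-pigeonhole} to thin the reservoir, the same use of the PA degree to select members of nonempty $\Pi^0_1(\M_n)$ classes, and the same inductive descent for $k>0$. The only loose phrase is calling the extraction of the threshold $t$ and the partition $Y_0,\dots,Y_s$ a ``search'': for $t$ the relevant fact is not that the forcing question is $\Pi^0_1(\M_n)$ but that, given it holds, the predicate ``$t$ works'' is $\Sigma^0_1$ relative to a $P$-computable oracle; and for the partition (together with $\vec h$ and $\vec S$) one does not search but invokes the PA-degree property of $P$ on the $\Pi^0_1(\M_n)$ class $\L$ of all such witnessing data, exactly as you correctly note for $\vec S$.
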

\begin{proof}
Let $c := (\vec{R},\sigma,X)$ with $\QQ_n$-index~$\langle e_R, \sigma, e_X \rangle$.

    \begin{itemize}
        \item Suppose $\sigma \qvdash_m (\exists x) \psi_e(G,x)$. As in the proof of \Cref{lem:question-below-validity}, by a compactness argument, there exists $t \in \omega$ such that for every 2-colorings $\vec{h} = h_0, \dots, h_{m-1} \in 2^t$, there is a finite $\tau \subseteq X \cap \{0, \dots, t \}$ which is $\vec{R}$-transitive and $\vec{h}$-homogeneous and some $x \in \omega$ such that
        \begin{itemize}
            \item if $k = 0$, $\psi_e(\sigma \cup \tau,x)$,
            \item if $k > 0$, $\sigma \cup \tau \nqvdash_\ell\neg \psi_e(G, x)$ for some $\ell \geq m.$
        \end{itemize} 
        $M_n$-indices of the colorings $\vec{g}$ defined in \Cref{lem:question-below-validity} are uniformly $P$-computable in~$\langle e_R, \sigma, e_X \rangle$. By \Cref{lem:effective-pigeonhole}, one can $P$-compute uniformly in~$M_n$-indices of~$\vec{g}$ and $e_X$ an $M_n$-index of a $\vec{g}$-homogeneous set~$H \subseteq X$ in $\M_n \cap \U^{\M_{n-1}}_{C_{n-1}}$.

        The colorings $\vec{h}$ are defined uniformly from the colors of $\vec{g}$-homogeneity of $H$. Thus, the finite $\vec{R}$-transitive and $\vec{h}$-homogeneous set $\tau \subseteq X \cap \{0,\dots,t\}$ is found $P$-uniformly in~$\langle e_R, \sigma, e_X \rangle$. One can therefore $P$-compute a $\QQ_n$-index of~$d := (\vec{R},\sigma \cup \tau, H)$ uniformly from a $\QQ_n$-index of~$c$.

        If $k = 0$, $d$ is the desired extension.
        If $k > 0$, since~$\sigma \cup \tau \nqvdash_\ell\neg \psi_e(G, x)$, then by induction hypothesis, one can $P$-computably find a $\QQ_n$-index of an extension~$p \leq d$ such that $p \Vdash \psi_e(G, x)$, uniformly in a $\QQ_n$-index of~$d$, hence in a $\QQ_n$-index of~$c$.

    \item Suppose $\sigma \nqvdash_m (\exists x) \psi_e(G,x)$. 
    For every set~$Y$, let $\P_Y$ be the class of all $m$-tuple of 2-colorings $\vec{h} = h_0, \dots, h_{m-1} \in 2^Y$ and $\vec{S} \in \C_{m}(\sigma,Y)$, such that for all $\tau \subseteq Y \setminus \{0, \dots, |\sigma|\}$ which is $\vec{S}$-transitive and $\vec{h}$-homogeneous, and for all~$x \in \omega$, 
    \begin{itemize}
        \item if $k = 0$, $\neg \psi_e(\sigma \cup \tau,x)$,
        \item if $k > 0$, $\sigma \cup \tau \qvdash_\ell \neg \psi_e(G, x)$ for all $\ell \geq m$.
    \end{itemize}
    Note that the class~$\P_Y$ is $\Pi^0_1(M_k \oplus Y)$ uniformly in~$Y$.
    
    Since $\sigma \nqvdash_m (\exists x) \psi_e(G,x)$, there exists $s \in \omega$ and $Y_0, \dots Y_s$ a partition of $\omega$ such that for all $i \leq s$, $(\dagger)$ either $Y_i \not \in \U_{C_k}^{\M_k}$ or $\P_{Y_i} \neq \emptyset$. Let~$\L$ be the class of all such $s$-partitions of~$\omega$. Since the statement \qt{$Y_i \not \in \U_{C_k}^{\M_k}$} is $\Pi^0_1(C_k \oplus M_k')$ and the statement \qt{$\P_{Y_i} \neq \emptyset$} is $\Pi^0_1(M_k \oplus Y_i)$ uniformly in~$Y_i$, the class $\L$ is $\Pi^0_1(C_k \oplus M_k')$ and in particular is $\Pi^0_1(\M_n)$ since~$k < n$. One can $P$-compute uniformly in an index of~$\L$ an $M_n$-index of some~$(Y_0, \dots, Y_s) \in \L$. By \Cref{lem:effective-pigeonhole}, one can $P$-compute uniformly in an $M_n$-index of $(Y_0, \dots, Y_s)$ and an $M_n$-index of~$X$ some~$i \leq s$ and an $M_n$-index of some set~$H_0 \subseteq X \cap Y_i$ in $\M_n \cap \U^{\M_{n-1}}_{C_{n-1}}$. In particular, $Y_i \in \U_{C_k}^{\M_k}$, thus $\P_{X_i} \neq \emptyset$ and one can $P$-computably find an $M_n$-index of an $m$-tuple~$\vec{h}$ and $\vec{S} \in \C_m(\sigma, Y)$ in $\P_{X_i}$.
    
    By \Cref{lem:effective-pigeonhole}, one can $P$-compute the $M_n$-index of a $\vec{h}$-homogeneous subset~$H \subseteq H_0$ in~$\M_n \cap \U^{\M_{n-1}}_{C_{n-1}}$ uniformly in~$\langle e_R, \sigma, e_X \rangle$. Thus, a $\QQ_n$-index of the condition $d := (\vec{R}\vec{S}, \sigma, H)$ can be $P$-computed uniformly in a $\QQ_n$-index of~$c$ and the formula $(\exists x)\psi_e(G, x)$.

    \end{itemize}
\end{proof}

\begin{lemma}\label[lemma]{lem:effective-question-validity}
In \Cref{lem:question-validity}, a $\QQ_n$-index of an extension~$d$ can be found~$P$-uniformly from a $\QQ_n$-index of~$c$ and the $\Sigma^0_{n+1}$ formula~$(\exists x)\psi_e(G, x)$.
\end{lemma}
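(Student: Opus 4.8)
The plan is to transcribe the proof of \Cref{lem:question-validity} step by step and check that every object it produces comes equipped with a $P$-computable $\QQ_n$-index, the available tools being \Cref{lem:effective-pigeonhole} in place of each appeal to partition regularity, \Cref{lem:effective-question-below-validity} for the recursive call to the lower forcing relation, the fact that $M_n \leq_T P$ (so that the compactness searches run through $M_n$-computable predicates and are $P$-effective), and the fact --- established inside the proof of \Cref{lem:effective-pigeonhole} --- that from an index for a $\Pi^0_1(\M_n)$ class one $P$-computes an $M_n$-index of a member. Throughout, $m$ denotes the arity of the top forcing question.

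\emph{Case $c \qvdash (\exists x)\psi_e(G,x)$.} Since $(c \qvdash (\exists x)\psi_e(G,x))$ is $\Sigma^0_1(\M_n)$ by \Cref{lem:question-complexity}, one $P$-searches for the compactness threshold $t$ of the proof. From the $\QQ_n$-index of $c$ one reads off indices for $\vec R$, hence $M_n$-indices for the colorings $\vec g$ given by $g_{i,a}(y) = R_i(a,y)$; iterating \Cref{lem:effective-pigeonhole} over the components of $\vec g$ yields, $P$-uniformly, an $M_n$-index of a $\vec g$-homogeneous $H \subseteq X$ in $\M_n \cap \U^{\M_{n-1}}_{C_{n-1}}$. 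Evaluating $H$ through $M_n$ (hence through $P$) one forms the derived tuple $\vec h$, then $P$-searches for the finite $\vec R$-transitive, $\vec h$-homogeneous $\tau \subseteq X \cap \{0,\dots,t\}$ and the witness $x$ with $\sigma \cup \tau \nqvdash_m \neg\psi_e(G,x)$; a $\QQ_n$-index of $d := (\vec R, \sigma\cup\tau, H)$ is then $P$-computable. As $\neg\psi_e(G,x)$ is $\Sigma^0_n$ and $d \nqvdash_m \neg\psi_e(G,x)$, \Cref{lem:effective-question-below-validity} applied to $\neg\psi_e(G,x)$ (legitimate since $n-1 < n$) produces, $P$-uniformly in the $\QQ_n$-index of $d$, a $\QQ_n$-index of an extension $p \leq d$ with $p \Vdash \psi_e(G,x)$, hence $p \Vdash (\exists x)\psi_e(G,x)$.

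\emph{Case $c \nqvdash (\exists x)\psi_e(G,x)$.} The class $\L$ of pairs $(\vec h, \vec S)$ with $\vec S \in \C_m(\sigma,X)$ witnessing the negative answer is $\Pi^0_1(\M_n)$ uniformly in an $M_n$-index of $X$: indeed $\C_m(\sigma,X)$ is $\Pi^0_1(X)$ by \Cref{lem:approx-pi01}, and $(\sigma\cup\tau \nqvdash_m \neg\psi_e(G,x))$ is $\Sigma^0_1(\M_n)$ by \Cref{lem:question-below-complexity} applied to the $\Sigma^0_n$ formula $\neg\psi_e$; by hypothesis $\L$ is non-empty, so one $P$-computes an $M_n$-index of a member $(\vec h, \vec S) \in \L$. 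Applying \Cref{lem:effective-pigeonhole} componentwise to $\vec h$ gives a $P$-computable $M_n$-index of a $\vec h$-homogeneous $Y \subseteq X$ in $\M_n \cap \U^{\M_{n-1}}_{C_{n-1}}$, and concatenating $\vec R$ with $\vec S$ yields a $P$-computable $\QQ_n$-index of $d := (\vec R\vec S, \sigma, Y) \leq c$, which forces $(\forall x)\neg\psi_e(G,x)$ exactly as in the original argument.

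The only genuinely delicate point is the bookkeeping of indices: one must ensure that at every stage the tournaments, the stem and the reservoir are carried together with descriptions from which a bona fide $\QQ_n$-index is recoverable $P$-computably, and in particular that the newly committed tournaments $\vec S$, located inside a $\Pi^0_1(\M_n)$ class via the effective-membership fact, indeed lie in $\M_n$ and so give a legitimate $\QQ_n$-condition. Everything else is a literal copy of the proof of \Cref{lem:question-validity}, with each appeal to $\M_n \models \WKL$ replaced by $P$-uniform member-extraction and each compactness search replaced by a $P$-search; this, together with \Cref{lem:effective-question-below-validity}, then lets one build the $n$-generic descending sequence of conditions $P$-computably and conclude \Cref{thm:effective-strong-main}.
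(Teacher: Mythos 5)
Your proof is correct and follows essentially the same route as the paper's: both cases are a step-by-step effectivization of the proof of \Cref{lem:question-validity}, replacing appeals to partition regularity by \Cref{lem:effective-pigeonhole}, replacing the recursive use of \Cref{lem:question-below-validity} by \Cref{lem:effective-question-below-validity}, and extracting the witness pair $(\vec h,\vec S)$ from the $\Pi^0_1(\M_n)$ class via $P\geq_T M_n$ and $\M_n\models\WKL$. The paper is more terse (it outsources the positive case to the proof of \Cref{lem:effective-question-below-validity}), but the content and the index bookkeeping are the same.
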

\begin{proof}

Let $m = |\vec{R}|+1$.

\begin{itemize}
    \item 
First, suppose $c \qvdash (\exists x) \psi_e(G,x)$. The proof is 
essentially the same as the first case of \Cref{lem:effective-question-below-validity}: One define colorings $\vec{g}$ and $\vec{h}$ similarly, and refine the reservoir into a $\vec{g}$-homogeneous subset thanks to \Cref{lem:effective-pigeonhole}.
One therefore obtains a $\QQ_n$-index of~$d := (\vec{R},\sigma \cup \tau, H)$ uniformly from a $\QQ_n$-index of~$c$, where $\sigma \cup \tau \nqvdash_\ell\neg \psi_e(G, x)$.

Then, by \Cref{lem:effective-question-below-validity}, one can $P$-computably find a $\QQ_n$-index of an extension~$p \leq d$ such that $p \Vdash \psi_e(G, x)$, uniformly in a $\QQ_n$-index of~$d$, hence in a $\QQ_n$-index of~$c$.

\item Now, suppose $c \nqvdash (\exists x) \psi_e(G,x)$. Then, there exists an $m$-tuple of 2-colorings $\vec{h} = h_0, \dots, h_{m-1} \in 2^X$ and an $m$-tuple of tournaments $\vec{S} \in \C_m(\sigma,X)$ such that for every $\vec{h}$-homogeneous and $\vec{S}$-transitive finite chain $\sigma \subseteq X$ and every $x \in \omega$, $\sigma \cup \tau \qvdash_m \neg \psi_e(G,x)$. Let $\L$ be the class of such $m$-tuples of 2 colorings $\vec{h}$ and $m$-tuples of $\vec{S} \in \C_m(\sigma,X)$. By \Cref{lem:approx-pi01}, $\C_m(\sigma, X)$ is $\Pi^0_1(X)$. Since $X \in \M_n$, the class~$\L$ is $\Pi_1^0(\M_n)$, hence, since $P \geq_T M_n$, one can $P$-compute $M_n$-indices of a pair $(\vec{h}, \vec{S}) \in \M_n \cap \L$. By \Cref{lem:effective-pigeonhole}, one can $P$-compute a $\QQ_n$-index of a $\vec{h}$-homogeneous set~$Y \subseteq X$ in~$\P$ uniformly in $\M_n$-indices of $X$ and $\vec{h}$.
The 3-tuple $d := (\vec{R}\vec{S}, \sigma, Y)$ is a valid $\EM$-condition such that $d \Vdash (\forall x) \neg \psi_e(G,x)$.

\end{itemize}

\end{proof}

We are now ready to prove \Cref{thm:effective-strong-main}.

\begin{proof}[Proof of \Cref{thm:effective-strong-main}]

Let $n \geq 1$. Let $P$ be a set of PA degree over $\emptyset^{(n)}$. Let $(\psi_s(G))_{s > 0}$ be an enumeration of all $\Sigma_{k+1}^0$ formulas for all $k \leq n$. 

Let us begin with a condition $ c_{0} := (T, \emptyset, \omega)$. By induction, suppose $c_{s-1}$ built, with $i_{s-1}$ a $\QQ_n$-index of $c_{s-1}$, Consider the $\Sigma^0_{k+1}$ formula $\psi_s(G)$ , for $k \leq n$. By \Cref{lem:effective-question-below-validity} if $k < n$ and by  \Cref{lem:effective-question-validity} if $k = n$, there exists an extension $d \leq c_{s-1}$ such that $d \Vdash \psi_s(G)$ or  $d \Vdash \neg \psi_s(G)$. Moreover, a $\QQ_n$-index $i_d$ of $d$ is $P$-computable uniformly from~$i_{s-1}$ and $s$. Let $c_s := d$, and $i_s := i_d$. The $(c_s)_{s \in \omega}$ sequence is a decreasing sequence of conditions such that $\F := \{ d \in \QQ_n : (\exists s)c_s \leq d \}$ is n-generic. By \Cref{lem:1-generic-infinite} and \Cref{lem:gf-transitive}, $G_{\F}$ is infinite and $T$-transitive, and by \Cref{lem:forcing-implies-truth}, $G_\F^{(n)} \leq_T P$.
\end{proof}

\bibliographystyle{plain}
\bibliography{biblio}

\end{document}